\documentclass[a4paper,oneside]{article}
\usepackage[top=4cm,bottom=3.5cm,left=3.5cm,right=3.5cm]{geometry}
\usepackage[T1]{fontenc}
\usepackage[utf8]{inputenc}
\usepackage[UKenglish]{babel}
\usepackage{amsthm}
\usepackage{amsmath,amssymb}
\usepackage{nicefrac}
\usepackage{mathtools}
\usepackage{mathdots}
\usepackage{hyperref}
\usepackage{centernot}
\usepackage[inline]{enumitem}
\newlist{choices}{enumerate*}{1}
\setlist[choices]{itemjoin = \hspace{1cm}, label=\arabic*.}

\DeclareMathOperator{\smod}{smod}
\DeclareMathOperator{\st}{st}
\DeclarePairedDelimiter{\set}{\{}{\}}
\DeclarePairedDelimiter{\abs}{\lvert}{\rvert}

\newcommand{\bla}[4]{{#1}_{#2}#3\ldots#3{#1}_{#4}}

\usepackage{thmtools}
\usepackage{cleveref}
\usepackage{nameref}
\Crefname{rem}{Remark}{Remarks}
\Crefname{alphthm}{Theorem}{Theorems}
\Crefname{thm}{Theorem}{Theorems}
\Crefname{equation}{Equation}{Equations}

\usepackage[backend=bibtex,style=numeric,maxbibnames=99,sorting=nyt]{biblatex}
\renewbibmacro{in:}{}

\newcommand{\N}{\mathbb{N}}

\newcommand{\Z}{\mathbb{Z}}
\newcommand{\from}{:}
\newcommand{\inverse}{^{-1}}
\newcommand{\nepfin}{\mathcal P_{\mathrm{fin}}(\mathbb N)\setminus\set\emptyset}
\newcommand{\fcol}{\mathbb N=\bla C1\cup r}
\renewcommand{\star}{{}^\ast}
\renewcommand{\phi}{\varphi}
\renewcommand{\epsilon}{\varepsilon}

\theoremstyle{definition}
\newtheorem{thm}{Theorem}[section]
\newtheorem{lemma}[thm]{Lemma}
\newtheorem{rem}[thm]{Remark}
\newtheorem{pr}[thm]{Proposition}
\newtheorem{co}[thm]{Corollary}
\newtheorem{fact}[thm]{Fact}
\newtheorem{defin}[thm]{Definition}
\newtheorem{eg}[thm]{Example}
\newtheorem{notation}[thm]{Notation}
\newtheorem{problem}[thm]{Problem}
\newtheorem{question}[thm]{Question}
\Crefname{question}{Question}{Questions}
\Crefname{fact}{Fact}{Facts}
\newtheorem{claim}{Claim}[thm]

\newtheorem{alphthm}{Theorem}

\newtheorem{alphco}[alphthm]{Corollary}

\newtheoremstyle{named}{}{}{}{}{\bfseries}{.}{.5em}{\thmnote{#3}}
\theoremstyle{named}

\let\oldqed\qedsymbol

\newcommand{\qedclaim}{\mbox{$\underset{\textsc{claim}}{\oldqed}$}}
\makeatletter
\newenvironment{claimproof}[1][\it Proof of Claim]{
\let\qedsymbol\qedclaim
  \par
  \pushQED{\qed}%
  \normalfont \topsep6\p@\@plus6\p@\relax
  \trivlist
\item[\hskip\labelsep
  \upshape
  #1\@addpunct{.}]\ignorespaces
}{%
  \popQED\endtrivlist\@endpefalse
}
\makeatother
\let\qedsymbol\oldqed

\usepackage{titling}
\usepackage[affil-it]{authblk}
\usepackage{orcidlink}

\newcommand{\email}[1]{\href{mailto:#1}{\texttt{#1}}}

\makeatletter
\newcommand{\subjclass}[2][2020]{%
  \let\@oldtitle\@title%
  \gdef\@title{\@oldtitle\footnotetext{\hspace*{-2em}#1 \emph{Mathematics subject classification.} #2}}%
}
\newcommand{\keywords}[1]{%
  \let\@@oldtitle\@title%
  \gdef\@title{\@@oldtitle\footnotetext{\hspace*{-2em}\emph{Keywords:} #1.}}%
}

\makeatother

\author[1]{Mauro Di Nasso\orcidlink{0000-0001-6103-9775}%
  \thanks{email: \email{mauro.di.nasso@unipi.it}}}

\author[2]{Lorenzo Luperi Baglini\orcidlink {0000-0002-0559-0770}%
  \thanks{email: \email{lorenzo.luperi@unimi.it}}}

\author[ ]{Rosario Mennuni\orcidlink{0000-0003-2282-680X}%
  \thanks{email: \email{R.Mennuni@posteo.net}}}

\author[3]{Mariaclara Ragosta\orcidlink{0009-0004-6641-4676}%
  \thanks{email: \email{mariaclara@kam.mff.cuni.cz}}}

\author[2,4]{Alessandro Vegnuti\orcidlink{0009-0005-2490-720X}%
  \thanks{email: \email{alessandro.vegnuti@unimi.it}}}

\affil[1]{\small Dipartimento di Matematica, Universit\`a di Pisa, Largo Bruno Pontecorvo 5, 56127 Pisa, Italy}
\affil[2]{\small Dipartimento di Matematica, Università  di Milano, Via Saldini 50, 20133 Milano, Italy}
\affil[3]{\small Department of Applied Mathematics (KAM), Charles University, Malostranské náměstí 25, Praha 1, Czech
  Republic}
\affil[4]{Mathematisches Institut, Albert-Ludwigs-Universität Freiburg, D-79104 Freiburg,
Germany}

\title{Monochromatic sums and quotients in $\mathbb N$}

\subjclass{Primary: 05D10. Secondary: 26E35, 03H15, 11U10, 54D80.}

\keywords{Arithmetic Ramsey theory, partition regularity, ultrafilter, nonstandard analysis}

\begin{document}

\maketitle

\begin{abstract}
We prove partition regularity of the configuration $x,y,x+y,y/x$ in a strong infinitary form that extends Hindman's Theorem. We study the related issue of partition regularity of configurations involving products of a degree one polynomial in $x$ with one in $y$, reducing the general problem to a handful of special cases.
\end{abstract}

\section*{Introduction}

\paragraph{Sums, products and colours.} A much studied problem in the combinatorics of the natural numbers is that of monochromaticity of arithmetic configurations. The archetypal result in this area is Schur's Theorem~\cite{schurUberKongruenz}, saying that the pattern $x,y,x+y$ is \emph{partition regular}: whenever the natural numbers\footnote{In order to avoid trivialities, throughout the paper we convene that $0\notin \mathbb N$.} are finitely coloured, i.e.\ partitioned in finitely many pieces, there are some $x,y\in \mathbb N$ such that $x,y,x+y$ lie in the same colour.

In this context, a cornerstone result is Hindman's infinitary extension of Schur's Theorem~\cite{hindmanFiniteSumsSequences1974}: in every finite colouring of $\mathbb N$, one may find an infinite sequence such that all sums of finitely many of its terms are monochromatic. The same holds when the sum is replaced by an arbitrary associative operation~\cite[Theorem~5.8]{hindmanAlgebraStoneCechCompactification2011}, and in particular the pattern $x,y,xy$ is partition regular. 

However, despite the thorough literature now available on the problem of partition regularity of Diophantine equations\footnote{See the introduction to \cite{di2018ramsey} for a survey.}, whether  $x,y,x+y,xy$ is partition regular remains one of the most long-standing open problems in the area. About this pattern, what is known at present essentially amounts to the following.
  \begin{itemize}
  \item Partition regularity holds for colourings of $\mathbb Q$, see \cite{bowenMonochromaticProductsSums2024}.
  \item Monochromatic $x,y,x+y,xy$ may be found in every colouring of $\mathbb N$ with $2$ colours, see~\cite{hindmanPartitionsSumsProducts1979}. In fact, in such a colouring one may find monochromatic sums and products of arbitrarily large finite size, see~\cite{bowenMonochromaticProductsSums2025}.
  \item Over $\mathbb N$, the pattern $x,x+y,xy$ is partition regular, see \cite{moreiraMonochromaticSumsProducts2017}.
  \item Over $\mathbb N$, the Hindman version of partition regularity fails, already at the level of pairs. Namely, there is a finite colouring of $\mathbb N$ admitting no infinite sequence $(x_i)_{i\in \mathbb N}$ such that all $x_i+x_j$ and all $x_i x_j$, for $i\ne j$, lie in the same colour. See~\cite{hindmanPartitionsPairwiseSums1984a}, or~\cite{dinassoRamseysWitnesses2025} for a compact proof and a systematic study of similar problems. Over $\mathbb Q$, the Hindman version is still open.
  \end{itemize}

\paragraph{Results.} In this paper we prove that sums and ratios are, instead, much more well-behaved from this point of view, as Hindman's Theorem may be extended to accommodate ratios of sums coming from a fixed sequence.
\begin{alphthm}[{\Cref{thm:fsandratios}}]\label{athm:fsrations}
  For every finite colouring $\fcol$ there are $m\le r$ and an increasing sequence $(x_i)_{i\in \mathbb N}$ such that, for all $k<\ell\in \mathbb N$ and all $\bla i1<\ell$, the colour $C_m$ contains all 
\begin{equation*}
\bla x{i_1}+{i_k}\quad\text{ and }\quad \frac{\bla x{i_{k+1}}+{i_\ell}}{\bla x{i_1}+{i_k}}.
\end{equation*}
\end{alphthm}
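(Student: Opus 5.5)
The plan is to prove the theorem via ultrafilters, in the style of the Galvin–Glazer proof of Hindman's Theorem. The idea is to add a multiplicative compatibility condition that lets us put the ratios in the colour.

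Write $s=\bla x{i_1}+{i_k}$ and $t=\bla x{i_{k+1}}+{i_\ell}$. The required statement is that $s\in C_m$ and $t\in s\cdot C_m$. So I look for an ultrafilter $u$ on $\mathbb N$ and a colour $C=C_m\in u$ such that:
\begin{enumerate*}[label=(\roman*)]
\item $u+u=u$;
\item for every $A\in u$ the set $\set{s\in\mathbb N : sA\in u}$ belongs to $u$, where $sA=\set{sa:a\in A}$.
\end{enumerate*}
Condition (ii) says that $\lim_{s\to u} s^{-1}\cdot u$ concentrates on $A$, where $s^{-1}\cdot u$ is read in $\beta\mathbb Q$ and pulled back to $\mathbb N$.

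\textbf{Step 1: the inductive construction, given such a $u$.} I run a Galvin–Glazer induction with a richer bookkeeping set. Put $A^\star=\set{a\in A: A-a\in u}$, and let $A_0=C\cap\set{s: sC\in u}$, which lies in $u$.

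At stage $n$ I keep finitely many constraints, one for each "finished" finite sum $s$ formed from earlier terms. The candidate $x_n$ must satisfy three conditions:
\begin{itemize}
\item every new finite sum containing $x_n$ stays in $C^\star\cap A_0$;
\item for every earlier sum $s$, all sums $t$ of later terms that start with $x_n$ stay in $(sC)^\star$;
\item $x_n$ exceeds $x_{n-1}$.
\end{itemize}
Each of these is a $u$-large condition by (i) and (ii): shifting by an element of $B^\star$ preserves $u$-largeness, and $sC\in u$ for $s\in A_0$. A finite intersection of them is therefore in $u$, and $x_n$ can be chosen from it. The resulting sequence gives $s\in C$ for every block sum of the first kind and $t\in sC$ for every later block sum, i.e.\ $t/s\in C$.

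\textbf{Step 2 (main obstacle): existence of $u$.} Condition (ii) is a multiplicative "dilation-invariance in the limit" property. It must coexist with additive idempotence, and no ultrafilter is simultaneously an additive and a multiplicative idempotent, so (ii) cannot be obtained that way. I would first try to reformulate (ii) as membership of $u$ in a compact additive subsemigroup of $\beta\mathbb N$ defined by a multiplicative property. Then Ellis' lemma yields an idempotent in the closed subsemigroup of those $u$ satisfying the condition for all $A$.

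A candidate approach is to use the nonstandard characterisation: $u$ is generated by some $\xi\in{}^\ast\mathbb N$, and (ii) says that $\xi\cdot{}^\ast\xi$ (in ${}^{\ast\ast}\mathbb N$) generates $u$, i.e.\ $\xi\sim \xi\cdot{}^\ast\xi$ up to the appropriate iteration. Additive idempotence says $\xi\sim\xi+{}^\ast\xi$. So I would search for $\xi$ whose type is fixed under both maps $\xi\mapsto\xi+{}^\ast\xi$ and $\xi\mapsto\xi\cdot{}^\ast\xi/\xi$, i.e.\ the ratio form, rather than a product idempotent. Concretely, I would take $v$ a minimal multiplicative idempotent and $u$ an additive idempotent in the closure of $\set{n\cdot v:n\in\mathbb N}$, or of $\beta\mathbb N\odot v$, and check that (ii) transfers. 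If the required closure under $+$ fails, I would instead localise to a colour $C$ that is both additively and multiplicatively central, using an element of $K(\beta\mathbb N,\odot)\cap\overline{E(\beta\mathbb N,+)}$.

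Once the correct $u$ is obtained, the colour $C_m$ in the theorem is simply the one belonging to $u$.
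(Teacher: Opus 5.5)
Step 2 is where all the difficulty lies, and as you formulate it, it cannot be carried out. No ultrafilter satisfies (i) together with (ii) for every $A\in u$. Hence the set of ultrafilters satisfying (ii) contains no closed additive subsemigroup, and Ellis' lemma has nothing to act on.

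To see this, let $w\coloneqq v_3(u)$. This is an ultrafilter on $\mathbb N\cup\set{0}$, and it is nonprincipal since (i) gives $3^k\mathbb N\in u$ for all $k$. For $B\in w$, apply (ii) to $A\coloneqq v_3^{-1}(B)$. As $sA\subseteq v_3^{-1}(v_3(s)+B)$, you get $\set{j\mid j+B\in w}\in w$.

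First take $B$ to be the residue class modulo $N$ that lies in $w$. Some $j\in B$ has $j+B\in w$, so $B$ meets $j+B$ and is therefore the class of $0$. Thus $N\mathbb N\in w$ for every $N$.

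Now let $B\coloneqq\set{j\mid \smod_3(j)=c}\in w$ with $c\in\set{1,2}$. Pick $j\in B$ with $j+B\in w$, and pick $i\in(j+B)\cap 3^{v_3(j)+1}\mathbb N$. Then $i-j\in B$, yet $\smod_3(i-j)=\smod_3(-j)=-c\neq c$, a contradiction.

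In your nonstandard language, (ii) for all $A$ says ${}^\ast\xi/\xi\sim\xi$, i.e.\ $D(u\otimes u)=u$; it does not say $\xi\cdot{}^\ast\xi\sim\xi$. The argument above is just the sign flip of $\smod_3$ on $v_3({}^\ast\xi)-v_3(\xi)$. Your other candidates do not help either: colours in elements of $K(\beta\mathbb N,\odot)\cap\overline{E(\beta\mathbb N,+)}$ contain rich sum and product configurations, but nothing there controls quotients of block sums.

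What you are missing is twofold. Your Step 1 only ever uses (ii) for the single set $A=C$. And the colour should be taken from the ratio ultrafilter, not from $u$.

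The paper does this as follows. An arbitrary additive idempotent $u$ contains every $n\mathbb N$, so it is self-divisible. Let $C_m$ be the colour in $D(u\otimes u)$; this means exactly that $\set{s\mid sC_m\in u}\in u$. The paper applies the Bergelson--Hindman--Williams variant of Milliken--Taylor to $D^{-1}(C_m)\cap\set{(a,b)\mid a\text{ divides } b}\in u\otimes u$. This yields $(z_i)$ with $\sum_{i\in G}z_i/\sum_{i\in F}z_i\in C_m$ for all $F<G$. The finite sums then come for free by normalising with $x_n\coloneqq z_{n+1}/z_1$:
\begin{itemize}
\item every finite sum of the $x_i$ is itself such a ratio, with $F=\set{1}$;
\item ratios of block sums of the $x_i$ are ratios of block sums of the $z_i$.
\end{itemize}

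In your framework the same normalisation reads as follows. For any $s$ with $sC_m\in u$, the rescaled ultrafilter $s^{-1}u\coloneqq\set{B\mid sB\in u}$ is an additive idempotent, because $x\mapsto x/s$ is an additive isomorphism $s\mathbb N\to\mathbb N$. It contains $C_m$, and it satisfies $\set{t\mid tC_m\in s^{-1}u}\in s^{-1}u$, since $s$ times this set is $s\mathbb N\cap\set{r\mid rC_m\in u}\in u$. That is all your Step 1 consumes. With this ultrafilter your induction goes through and essentially reproves the needed case of the Milliken--Taylor variant; without this idea, the proposal does not prove the theorem.
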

 In particular, this solves~\cite[Problem~6.3(3)]{dinassoRamseysWitnesses2025}, and gives that the pattern $x,y,x+y,y/x$ is partition regular over $\mathbb N$. In fact, so is every pattern $x,y,x+y,q\cdot y/x$ with $q\in \mathbb Q_{>0}$ (\Cref{co:fsratiosrescaled}) but, besides this, \Cref{athm:fsrations} is sharp, in the sense made precise by \Cref{thm:qcd}.

In fact, \Cref{thm:fsandratios} tells us more.  A simple change of variables ($a=x$ and $b=y/x$) transforms $x,y,x+y,y/x$ into $a,b,ab,a(b+1)$. Partition regularity of the latter was established by Goswami~\cite{goswamiMonochromaticTranslatedProduct2024}, answering a question of Sahasrabudhe~\cite{sahasrabudheExponentialPatternsArithmetic2018}, that in fact only asked about the pattern $a,b,a(b+1)$. Our \Cref{thm:fsandratios} also provides a strong infinitary version of Goswami's result and covers, as a baby case, the configuration
\[
a,\quad b,\quad c,\quad ab,\quad bc,\quad abc,\quad a+ab,\quad  a+abc,\quad  b+bc,\quad  ab+abc,\quad   a+ab+abc.
\]

The partition regularity of $a,b,a(b+1)$ is somewhat unexpected\footnote{In fact, Sahasrabudhe conjectured its failure while stating his question in~\cite{sahasrabudheExponentialPatternsArithmetic2018}.}, let alone that of $a,b,ab,a(b+1)$. One may therefore wonder what other partition regular configurations are out there involving products of two shifts. After proving the aforementioned results in \Cref{sec:qos}, the remainder of the present work addresses a more general version of this problem. To avoid trivialities, below we strengthen the notion of partition regularity by requiring the existence of infinitely many monochromatic solutions, cf.~\Cref{notation}.\ref{point:nontrivialsol}.

In \Cref{sec:prodshifts} we consider configurations $x,y,q(ax+n)(by+m)$, for $q\in \mathbb Q_{>0}$, $a,b\in \mathbb N$ and $n,m\in \mathbb Z$. The case $n=m=0$ is known to be partition regular (\Cref{rem:qxy}). When precisely one of $n,m$ is zero, say $n$, the general problem remains open but, for example, for every $b,m$ the generalised Sahasrabudhe pattern $x,y,\frac 1m x(by+m)$ is partition regular (\Cref{thm:nld}). Our next main result is a necessary condition for partition regularity in the case $n\ne 0\ne m$, involving consecutive squares and (the doubles of) consecutive triangular numbers. 

\begin{alphthm}[{\Cref{thm:squares}}]\label{athm:cosq}
  If  $nm\ne 0$ and the pattern
  \begin{equation*}
    x,\quad y,\quad q(ax+n)(by+m)
  \end{equation*}
 is partition regular, then there is $t\in \mathbb Z$ such that
  \begin{enumerate}[label=(\alph*)]
  \item\label{case:conssqrintro} $qam=t^2$ and $qbn=(t+1)^2$, or
  \item\label{case:consprodintro} $qam=t(t-1)$ and $qbn=t(t+1)$.
  \end{enumerate}
\end{alphthm}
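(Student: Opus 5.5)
The plan is to pass to the ultrafilter/nonstandard reformulation and extract arithmetic constraints in two layers. The first layer uses congruence colourings, which force a rational fixed point. The second uses Rado-type colourings applied to the shifted equation. Partition regularity with infinitely many solutions yields an ultrafilter $u$ and $u$-equivalent nonstandard hypernaturals $\xi,\eta,\zeta$ with $\zeta=q(a\xi+n)(b\eta+m)$. Write $A=qam$, $B=qbn$, so that
\[
\zeta = qab\,\xi\eta + A\xi + B\eta + qnm .
\]

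\textbf{Step 1: congruences.} Colouring by residues modulo every $N$ makes $\xi,\eta,\zeta$ congruent modulo all standard $N$ (after clearing the denominator of $q$). So they determine a common element $r$ of the profinite integers $\widehat{\mathbb Z}$. This $r$ must be a root of
\[
P(X)=qab X^2+(A+B-1)X+qnm .
\]
Hence $P$ has a root in $\mathbb Z_p$ for every prime $p$. In particular its discriminant $D=(A+B-1)^2-4q^2abnm=(A+B-1)^2-4AB$ is a square in $\mathbb Q_p$ for all $p$, and therefore a rational square. It follows that $P$ has rational roots. I would then refine the colouring so that the $p$-adic components of $r$ all come from the same rational root $r_0$; for instance, colour simultaneously by residues and by which root the residue approximates. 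Since $r_0\in\mathbb Z_p$ for all $p$, we get $r_0\in\mathbb Z$.

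\textbf{Step 2: shift.} Set $X=\xi-r_0$, $Y=\eta-r_0$, $Z=\zeta-r_0$. Because $P(r_0)=0$, this gives
\[
Z = qab\,XY+\alpha X+\beta Y,\qquad \alpha=qa(br_0+m),\quad \beta=qb(ar_0+n).
\]
Here $X,Y,Z$ remain $u'$-equivalent for a translated ultrafilter $u'$, and they are divisible by every standard $N$. The goal is now to show that a partition regular instance of this equation forces a Rado-type linear condition on $\alpha,\beta$. I would use colourings by the leading nonzero $p$-adic digit and by the parity of $v_p$, combined with size (multiplicative-scale) colourings, and argue by cases according to which of $v_p(X)$ and $v_p(Y)$ is smaller. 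In each case the term $qab\,XY$ has strictly larger valuation than the linear terms, so it is negligible $p$-adically. The leading digits must then satisfy the Rado condition for $Z=\alpha X+\beta Y$, namely $\alpha=1$, $\beta=1$, or $\alpha+\beta=1$, possibly with $q$-adjustments.

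\textbf{Step 3: algebra.} Combine $P(r_0)=0$ with whichever of these linear conditions holds. Note that $\alpha+\beta=2qab r_0+A+B$ and $2qabr_0=-(A+B-1)\pm\sqrt D$. Solving should force $D\in\{0,1\}$. The case $D=0$ means $(A+B-1)^2=4AB$, so $\sqrt A,\sqrt B$ differ by $1$; with integrality this gives case~\ref{case:conssqrintro}, $A=t^2$, $B=(t+1)^2$. The case $D=1$ should give case~\ref{case:consprodintro}, $A=t(t-1)$, $B=t(t+1)$. As sanity checks, $A=t^2,B=(t+1)^2$ gives $D=0$, and $A=t(t-1),B=t(t+1)$ gives $D=(2t^2-1)^2-4t^2(t^2-1)=1$.

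The main obstacle is Step 2. The product term is not always $p$-adically negligible, since valuations of $X$ and $Y$ may balance. Moreover, the size colouring interacts with $XY\gg X,Y$: in archimedean terms $Z\approx qab\,XY$, which is compatible with $u$-equivalence only in special ultrafilters. I would first try to handle this with a $v_p$-parity colouring. If $v_p(X)$ and $v_p(Y)$ have the same parity, then $v_p(XY)$ is even while the linear terms must match $v_p(Z)$. Choosing $p$ large, prime to $qabnm$, should separate the cases cleanly. Only if this fails would I fall back on Rado's original digit argument over a well-chosen base.
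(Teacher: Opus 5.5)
Your skeleton matches the paper's. The common $p$-adic class of the witnesses is a root of $P$, so $D$ is a square in every $\mathbb Q_p$ and hence in $\mathbb Q$. One then shifts by the root and compares least significant $p$-adic digits according to which valuation is smaller; this is \Cref{lemma:smodcases}, with the product term playing the role of $\theta$. Your identity $\alpha+\beta=1\pm\sqrt D$ is a nice shortcut for the paper's $(\ell,t)$ parametrisation. But Step~2, which carries the whole proof, is stated incorrectly, and the error sits exactly where case~(b) comes from. For one large prime $p$, comparing leading digits gives $\alpha\in\{0,1\}$ if $v_p(X)<v_p(Y)$, $\beta\in\{0,1\}$ if $v_p(Y)<v_p(X)$, and $\alpha+\beta\in\{0,1\}$ if $v_p(X)=v_p(Y)$. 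The value $0$ is the case where the leading digits cancel and nothing is contradicted. You omit $\alpha+\beta=0$, and by your own identity this is precisely $D=1$, i.e.\ case~(b). From your list, $\alpha+\beta=1$ forces $D=0$, so ``$D\in\{0,1\}$'' in Step~3 does not follow from Step~2. Conversely, the unequal-valuation outcomes are not conditions to feed into Step~3 but contradictions with $nm\neq0$. Indeed $\alpha=0,1$ and $\beta=0,1$ mean $r_0=-m/b$, $(1-A)/qab$, $-n/a$, $(1-B)/qab$ respectively, and $P$ takes the nonzero values $m/b$, $B/qab$, $n/a$, $A/qab$ there.

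Your ``main obstacle'' is therefore misdiagnosed. Once $v_p(X),v_p(Y)\ge1$, we have $v_p(XY)=v_p(X)+v_p(Y)>\max\{v_p(X),v_p(Y)\}$ whether or not the valuations balance. So the product can only matter when the linear part cancels to leading order, and that is itself an admissible outcome. No parity, size or Rado-digit colourings are needed.

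The refinement in Step~1 is not available either. The $\mathbb Z_p$-components of a witness may a priori come from different rational roots at different primes. Ruling this out is a genuine argument, done in the paper by the claim inside the proof of \Cref{lemma:witcaseb}, under case~(b). You do not need it: one large $p$ suffices, using the root to which the residue reduces, with its denominator cleared.

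Finally, ``with integrality'' has nothing behind it. $A=qam$ and $B=qbn$ are only rational, so you obtain the two shapes with $t\in\mathbb Q$. For instance $q=\tfrac14$, $a=b=m=1$, $n=9$ gives $D=0$ and $t=\tfrac12$, and passes all of these $p$-adic tests. The paper's parameter $t=A/(\ell-h)$ does not visibly supply integrality either, so this point needs its own argument.
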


This shows that, for instance, the pattern $x,y,(x+1)(y+2)$ is not partition regular. This necessary condition is certainly not sufficient, e.g.\ the configuration $x,y,(x+1)(4y+1)$ falls in case~\ref{case:conssqrintro} but is not partition regular, as witnessed by colouring natural numbers by their parity. This does not happen by chance, and is in fact a special case of the divisibility condition provided by our next theorem, which also gives a canonical form for patterns satisfying the conclusion of \Cref{athm:cosq}.

\begin{alphthm}\label{athm:newC}
  Suppose there is $t\in \mathbb Z$ satisfying the conclusion of \Cref{athm:cosq} (possibly with $nm=0$).   The pattern
\begin{equation*}
      x,\quad y,\quad q(ax+n)(by+m)
    \end{equation*}
  is partition regular if and only if the pattern
    \begin{equation*}
      x,\quad y,\quad (x+qbn)(y+qam)
    \end{equation*}
is partition regular and either
\begin{enumerate}[label=(\alph*)]
\item\label{point:newCa} $qam=t^2$,  $qbn=(t+1)^2$ and $(am+bn)/2ab -1/2qab\in \mathbb Z$,  or
  \item\label{point:newCb} $qam=t(t-1)$, $qbn=t(t+1)$ and $(am+bn)/2ab \in \mathbb Z$ .
\end{enumerate}
\end{alphthm}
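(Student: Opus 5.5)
The idea is to reduce the general pattern to the canonical one by an affine change of variables, and to read the integrality conditions (a)/(b) off the requirement that this substitution preserve integrality along a suitable residue class.

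Write $P=q(ax+n)(by+m)$. Since $qab>0$,
\[
P = qab\,(x+n/a)(y+m/b).
\]
Set $N=qbn$ and $M=qam$. The canonical pattern is $x',y',(x'+N)(y'+M)$. To compare the two we look for a substitution $x'=\lambda x+\mu$, $y'=\lambda' y+\mu'$ carrying one pattern to the other, together with a dilation of all three entries by a common factor. Partition regularity (with infinitely many solutions) is invariant under dilation by a positive rational, provided we only use it on integers. It is also invariant under restricting to the residue classes that any monochromatic solution is forced into by a congruence colouring.

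The plan has three steps.

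\emph{Step 1 (necessity of the integrality conditions).} Colour $\mathbb N$ by residues modulo a suitable modulus $D$, chosen from the denominators of $q$ and the quantities $(am+bn)/2ab$. Using $qam=t^2$, $qbn=(t+1)^2$ (or the triangular variant), show that a monochromatic solution forces $x\equiv y\equiv P\pmod D$. Then check that this congruence system is solvable only when $(am+bn)/2ab-1/2qab\in\mathbb Z$ in case (a), and when $(am+bn)/2ab\in\mathbb Z$ in case (b). The parity example $(x+1)(4y+1)$ should be the model computation. Concretely, iterate over $x\equiv y\equiv z$ modulo prime powers, and use that $z=qab(x+n/a)(y+m/b)$ must then satisfy a fixed-point quadratic congruence $z\equiv qab(z+n/a)(z+m/b)$. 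Its discriminant, by the hypothesis on $t$, is exactly a perfect square, $1$ in the relevant normalisation. This yields the root
\[
z\equiv \tfrac{1}{2qab}-\tfrac{am+bn}{2ab}\ \text{or similar},
\]
and the root must be integral. Roughly, this is where the two cases with their different correction terms come from.

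\emph{Step 2 (equivalence with the canonical pattern).} Given the integrality, let $c$ be the integer root found above. Substitute $x=\alpha x'+c$ and $y=\alpha y'+c$, with $\alpha$ a suitable multiple of the denominators. Expanding $q(ax+n)(by+m)$ then gives $\alpha\cdot(\text{canonical expression in }x',y')+c$, using the consecutive-square or triangular identity to make the constant terms match. This exhibits the affine map $u\mapsto \alpha u+c$ as sending solutions of the canonical pattern to solutions of the original one.

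For the two directions of the equivalence:
\begin{itemize}
\item Original from canonical: given a colouring, pull it back along $u\mapsto\alpha u+c$ and apply partition regularity of the canonical pattern.
\item Canonical from original: by Step 1, monochromatic solutions of the original pattern under the refined colouring (original colour $\times$ residue mod $\alpha$) lie in the class $c \bmod \alpha$. Invert the affine map there.
\end{itemize}
Infinitely many solutions are preserved because the map is injective.

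\emph{Main obstacle.} The delicate part is Step 1 and the bookkeeping in Step 2: choosing the modulus and the affine map uniformly across both cases, and accounting for the $1/2qab$ correction in case (a) versus its absence in case (b). I would first work out the normalised case $a=b=1$ with $q$ an integer, where the identities $t^2+(t+1)^2$ and $t(t-1)+t(t+1)$ make the discriminant computation transparent, and then rescale.
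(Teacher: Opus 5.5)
There is a genuine gap, and it is not just bookkeeping.

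\textbf{The map in Step 2.} The comparison map you propose does not exist. Since $qab\cdot q(ax+n)(by+m)=(qab\,x+qbn)(qab\,y+qam)$, the canonical pattern is simply the image of the original one under the dilation $u\mapsto qab\,u$. If you impose $P(\alpha x'+c,\alpha y'+c)=\alpha(x'+qbn)(y'+qam)+c$, comparing the coefficients of $x'y'$ and of $x'$ forces $\alpha=1/qab$ and $c=0$. The substitution $x=x'/qab+d$, with $d$ the root, only produces an intermediate pattern: in case~(a) this is $x',y',x'y'+(t+1)y'-tx'$. So neither implication is a pull-back of colourings. The whole content is integrality: monochromatic solutions of the original pattern must force $d\in\mathbb Z$ and $qab\,x\in\mathbb N$, and those of the canonical pattern must have $x/qab\in\mathbb N$.

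\textbf{The root in case~(b).} More seriously, Step~1 only works in case~(a). There the fixed-point quadratic $qab\rho^2+(q(am+bn)-1)\rho+qnm=0$ has discriminant $0$ (not $1$), with double root $\frac1{2qab}-\frac{am+bn}{2ab}$, and colourings by residues modulo high prime powers do force the needed integrality. In case~(b) the discriminant is $1$ and there are two roots, $-\frac{am+bn}{2ab}$ and $\frac1{qab}-\frac{am+bn}{2ab}$. Residues cannot tell them apart.

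For instance, $z=(x+2)(3y+2)$ is in case~(b) with $t=2$ and $(am+bn)/2ab=4/3\notin\mathbb Z$, so it is not PR by the theorem. Yet its fixed-point equation is $(3r+4)(r+1)=0$, and $x\equiv y\equiv z\equiv -1$ is consistent modulo every $D$, since $(-1+2)(3\cdot(-1)+2)=-1$. So no colouring by residue classes witnesses the failure. Your claim that the congruence system is solvable only under the integrality condition is therefore false.

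\textbf{The missing idea.} What you need is the Claim in the proof of \Cref{lemma:witcaseb}. Write $q=k/h$ and shift to $\gamma=kab\,x+(t^2-1)h$, $\delta=kab\,y+(t^2-1)h$. Then $h\gamma\sim h\delta\sim\gamma\delta+h(1-t)\gamma+h(1+t)\delta$, with $v_p(\gamma),v_p(\delta)$ infinite. This is ruled out by comparing $p$-adic valuations and trailing nonzero digits ($\smod_p$, $\smod_{p^\omega}$) in the three cases $v_p(\gamma)<v_p(\delta)$, $v_p(\gamma)>v_p(\delta)$ and $v_p(\gamma)=v_p(\delta)$, using $t\ne0$. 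The case $t=0$, i.e.\ $qam=qbn=0$, is \Cref{rem:qxy}. In standard terms this is a colouring by valuations and leading digits of $x-r$, not by residues.

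The same argument is needed for the converse direction. For canonical witnesses, the class $\rho$ of $x+t^2$ in $\mathbb Z_p$ satisfies $\rho=\rho^2$, and excluding $\rho=1$ is exactly what makes $x/qab$ integral. For example, for $z=(x+3)(2y+2)$ you need monochromatic solutions of $z=(x+6)(y+2)$ with $x$ even. But $x\equiv y\equiv z\equiv-3$ is again consistent modulo every $D$, since $(-3+6)(-3+2)=-3$.
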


In \Cref{sec:twopos} we study 4-piece configurations. As it turns out, very few of them have a chance to be partition regular. 

\begin{alphthm}\label{athm:4pieces}
   Let $q_i\in \mathbb Q_{>0}$, $a_i,b_i\in \mathbb N$ and $n_i,m_i\in \mathbb Z$. If the pattern
    \begin{equation*}
  x,\quad y,\quad q_1(a_1x+n_1)(b_1y+m_1),\quad q_2(a_2x+n_2)(b_2y+m_2)
\end{equation*}
is partition regular, then 
\begin{enumerate}
\item\label{case:really3} $q_1(a_1x+n_1)(b_1y+m_1)= q_2(a_2x+n_2)(b_2y+m_2)$, or
\item\label{case:cross} $q_1(a_1x+n_1)(b_1y+m_1)= q_2(b_2x+m_2)(a_2y+n_2)$, or
\item\label{case:0mixed} the pattern is $x, y, q xy, qxy + x$ or the symmetric $x, y, q xy, q xy + y$, with $q\coloneqq q_1a_1b_1=q_2a_2b_2$.
   \end{enumerate}
\end{alphthm}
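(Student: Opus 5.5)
The approach is to use the ultrafilter / nonstandard characterisation of partition regularity, in the form used throughout the paper. Fix an ultrafilter $u$ witnessing partition regularity. Then there are hypernatural $\xi,\eta$ in the monad of $u$ with
\[
\xi\sim\eta\sim q_1(a_1\xi+n_1)(b_1\eta+m_1)\sim q_2(a_2\xi+n_2)(b_2\eta+m_2),
\]
where $\sim$ means "generate the same ultrafilter". The nontrivial-solutions convention lets us take $\xi,\eta$ infinite.

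First, each three-piece subpattern $x,y,q_i(a_ix+n_i)(b_iy+m_i)$ is partition regular. So \Cref{athm:cosq} and \Cref{athm:newC} constrain each pair $(n_i,m_i)$ separately: either $n_i=m_i=0$, or exactly one of them is zero, or the consecutive-squares / triangular condition holds for some $t_i$.

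The core step compares the two products. Both are $u$-equivalent to $\xi$, so their ratio and difference must be compatible with the known invariants of $\sim$: the equivalence preserves residues modulo every $k$ and the standard part of $\zeta/\!\!\smod$-type multiplicative behaviour. Write
\[
P_i=q_ia_ib_i\,\xi\eta+q_ia_im_i\,\xi+q_ib_in_i\,\eta+q_in_im_i .
\]
Since $u$ must be multiplicatively idempotent-like, as in the proof of \Cref{rem:qxy}, I would first show that $q_1a_1b_1=q_2a_2b_2=:q$. For this I would use that $P_1/P_2$ tends to $q_1a_1b_1/q_2a_2b_2$, together with a lemma from \Cref{sec:prodshifts} saying that $u\sim$-equivalent numbers whose ratio is infinitely close to a standard rational $\rho$ force $\rho=1$, via a colouring by the parity of $\lfloor\log_\rho\rfloor$-type blocks.

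Next consider $P_1-P_2=\alpha\xi+\beta\eta+\gamma$, where $\alpha=q(m_1/b_1-m_2/b_2)$, $\beta=q(n_1/a_1-n_2/a_2)$ and $\gamma$ is the constant term. The plan is to show that a nonzero linear difference is incompatible with $P_1\sim P_2$ unless one of the listed shapes occurs, and the case split is as follows.
\begin{itemize}
\item If $\alpha=\beta=0$, then the constants also agree and we land in case~\ref{case:really3}.
\item The swapped comparison, i.e.\ factoring $P_2$ with the roles of the linear pieces exchanged, yields case~\ref{case:cross}.
\item Otherwise I would use the additive structure. Since $\xi\sim P_1$ with $P_1\approx q\xi\eta$, and we also have $\xi\sim\eta$, a difference $\alpha\xi+\beta\eta+\gamma$ between two equivalent elements must itself be compatible with an additively idempotent-type relation. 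I would argue, via colourings by the leading digit in base $N$ and by residues, that this forces $\gamma=0$, forces one of $\alpha,\beta$ to vanish, and forces the other to equal $1$ after normalisation. Together with the constraints from \Cref{athm:cosq} on each piece, this pins the pattern down to $x,y,qxy,qxy+x$ or its symmetric version, which is case~\ref{case:0mixed}.
\end{itemize}

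The main obstacle is this last step: ruling out generic linear differences $\alpha\xi+\beta\eta+\gamma$. I would try first to exploit the ratio $\xi/\eta$. In the nonstandard setting $\eta$ may be much larger than $\xi$ or vice versa, so I would separate the cases $\xi/\eta$ infinitesimal, infinite, or finite. In each case I would find a standard colouring, using leading digits together with congruences modulo a suitable $k$ built from $\alpha,\beta,\gamma$, which separates $P_1$ from $P_2$ unless the difference is exactly $\xi$ (or $\eta$). In that exceptional situation I would then have to check that the constant terms vanish, using \Cref{athm:newC}'s divisibility conditions.
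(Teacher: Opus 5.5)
\textbf{There is a genuine gap: the core step of the theorem is missing.} Two things in your proposal are correctly argued. You get $q_1a_1b_1=q_2a_2b_2$ from \Cref{pr:ClassiArchimedee}; this is \Cref{lemma:qabqab}, and no multiplicative idempotence of the witness is available or needed. You also dispose of $\alpha=\beta=0$. Everything else is only announced, and that is the whole content of the theorem.

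\emph{The tools you name cannot do the job.} Leading-digit colourings and the split on the size of $\xi/\eta$ only see Archimedean information. Once $q_1a_1b_1=q_2a_2b_2$, the ratio $P_1/P_2$ is infinitely close to $1$, so nothing more comes from there. Congruence colourings only see classes in $\mathbb Z_p$, and at that level $P_1\sim P_2$ merely says that $P_1-P_2$ is divisible by every natural number. For instance, for $x,y,qxy,qxy+2x$ any putative witness has $\xi$ divisible by every natural number, so no residue colouring separates $q\xi\eta$ from $q\xi\eta+2\xi$. What excludes this pattern (\Cref{co:qxy}, via \Cref{thm:qcd}) is the least significant nonzero digit map $\smod_p$, together with comparisons of $p$-adic valuations of suitably shifted variables (\Cref{lemma:smodcases}). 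Your plan never uses these.

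\emph{The case split is not well posed.} Your second bullet is not a condition on $\alpha,\beta,\gamma$. Cases 1 and 2 together correspond exactly to $\alpha+\beta=0=\gamma$. In case 2 one has $P_1-P_2=\alpha(\xi-\eta)$ with $\alpha\neq 0$ in general, e.g.\ $x,y,(x+1)(y+4),(x+4)(y+1)$, whose partition regularity is open. So what actually has to be proved is that $(\alpha+\beta,\gamma)\neq(0,0)$ forces case 3.

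\emph{Even your intended endpoint does not close.} The conditions $\gamma=\beta=0\neq\alpha$ force $n_1=n_2=0$, where \Cref{athm:cosq} says nothing. You would still need $\{q_1a_1m_1,q_2a_2m_2\}=\{0,1\}$, which is the content of \Cref{thm:mixed2to1} and \Cref{co:qxy}.

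\textbf{What the paper does instead} is a case analysis on which constant terms vanish.

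When $n_1m_1n_2m_2\neq 0$ (\Cref{pr:thmcne0}):
\begin{itemize}
\item \Cref{athm:cosq} gives parameters $t_1,t_2$ for the two three-piece subpatterns.
\item Since $q_1a_1b_1=q_2a_2b_2$, the changes of variables of \Cref{lemma:witcasea,lemma:witcaseb} coincide. Part~(D) there says the shifted witness $\alpha_1$ is divisible by every natural number; this is where one shows which root of the quadratic satisfied by the residue actually occurs.
\item Hence the residue of $q_1a_1b_1\xi$ modulo a large prime is $-t_i(t_i+1)$ or $-t_i^2$, according to the case of \Cref{athm:cosq}.
\item Equating these for $i=1,2$ gives $t_1=t_2$ (case 1) or $t_1$ equal to the symmetric parameter of $t_2$ (case 2). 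Mixed combinations force a zero constant, which contradicts the assumption.
\end{itemize}

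When some constant vanishes:
\begin{itemize}
\item \Cref{pr:nm0} shows the other piece must also have a vanishing constant.
\item \Cref{co:qxy} reduces the case $n_1=m_1=0$ to \Cref{thm:qcd}.
\item \Cref{thm:mixed2to1} and the last proposition of \Cref{sec:twopos} handle the remaining mixed cases.
\end{itemize}
Each of these works through the residue $r$ of $\xi,\eta$ modulo a large $p$, and, when $r=0$ or after shifting, through $\smod_p$.

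An argument of this $p$-adic kind, combined with the per-piece information from \Cref{lemma:witcasea,lemma:witcaseb}, is what your third bullet is missing. As it stands, the proposal does not prove the theorem.
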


We point out that, in case~\ref{case:0mixed}, partition regularity  does indeed hold (\Cref{co:qxy}). As for $n$-piece configurations with $n\ge 5$, observe that every subconfiguration of a partition regular one is itself partition regular. This, together with \Cref{athm:4pieces}, yields that, up to rescaling, there is only one 5-piece configuration with a chance to be partition regular, and that there are no 6-piece partition regular configurations at all.

\begin{alphco}\label{aco:5pezzi}
    Assume that the set $\set{x,y,q_i(a_ix+n_i)(b_iy+m_i)\mid i\leq k}$ has cardinality $k+2\geq 5$ and that the pattern is partition regular. Then $k= 3$ and the pattern has the form $x, y, qxy, qxy + x, qxy + y$. Moreover, the partition regularity of this pattern is equivalent to that of $x, y, xy, xy + x, xy + y$.
\end{alphco}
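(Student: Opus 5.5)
The plan is to obtain the corollary from \Cref{athm:4pieces}, applied to every pair of product-shift terms, together with the fact that subconfigurations of partition regular configurations are partition regular. Suppose the set $\set{x,y,P_i\mid i\le k}$, with $P_i\coloneqq q_i(a_ix+n_i)(b_iy+m_i)$, has $k+2\ge 5$ elements, so that $k\ge 3$ and the $P_i$ are pairwise distinct polynomials, each different from $x$ and $y$. For $i\ne j$ the pattern $x,y,P_i,P_j$ is partition regular, so \Cref{athm:4pieces} says that $P_i,P_j$ fall under case~\ref{case:really3}, \ref{case:cross} or~\ref{case:0mixed}. Case~\ref{case:really3} is excluded because $P_i\ne P_j$. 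The first step is therefore to rule out case~\ref{case:cross} for a pair that occurs inside a configuration with at least three products. If all pairs were of type~\ref{case:cross}, then among three products $P_1,P_2,P_3$ the relation ``$P_j$ is $P_i$ with the roles of the two factors' coefficients swapped'' would force $P_3$ to equal $P_1$, since swapping twice returns the original. That contradicts distinctness.

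The argument can be made precise by comparing factorisations. As a polynomial in $\mathbb Q[x,y]$, each $P_i$ determines its two linear factors up to scalars, so it determines the pair of roots $(-n_i/a_i,-m_i/b_i)$ and the leading coefficient $q_ia_ib_i$. Case~\ref{case:cross} for $(i,j)$ says $(-n_j/a_j,-m_j/b_j)=(-m_i/b_i,-n_i/a_i)$ with equal leading coefficients. So if $(1,2)$ and $(2,3)$ are both of type~\ref{case:cross}, then $P_3=P_1$. Consequently some pair, say after renumbering $(1,2)$, is of type~\ref{case:0mixed}. Then $\set{P_1,P_2}=\set{qxy,\,qxy+z}$ with $z\in\set{x,y}$.

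Now take a third product $P_3$ and consider the pairs $(1,3)$ and $(2,3)$. Suppose $(1,3)$ is of type~\ref{case:cross}. If $P_1=qxy$, then $P_3=qxy=P_1$, which is impossible. If $P_1=qxy+x=qx(y+1/q)$, then $P_3=q(x+1/q)y=qxy+y$, and then $(2,3)$ concerns $qxy$ and $qxy+y$, which is consistent. In every case the pairs, each of type~\ref{case:cross} or~\ref{case:0mixed}, force $\set{P_1,P_2,P_3}=\set{qxy,qxy+x,qxy+y}$ with the same $q$. This is a finite case check on the list in case~\ref{case:0mixed} and the swap rule, where $q$ is pinned down as the common leading coefficient. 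A fourth product $P_4$ would also have to lie in this three-element set, since the argument applies to every triple, contradicting distinctness. Hence $k=3$ and the pattern is $x,y,qxy,qxy+x,qxy+y$.

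For the final equivalence I would use the substitution $x=u/q$, or equivalently rescale one variable. Writing $x'=qx$ gives $qxy=x'y$, $qxy+x=(x'y+x')/q$ and $qxy+y=x'y+y$. This is not yet the clean form, so instead I would invoke the rescaling lemma used for \Cref{co:fsratiosrescaled} and \Cref{rem:qxy}: partition regularity over $\mathbb N$ is preserved under passing to ultrafilters $\mathcal U$ with $q\mathcal U$ compatible, that is, using $\mathcal U$ in $\overline{\bigcap_n n\mathbb N}$-type arguments. I expect this to be the main obstacle, namely verifying that the $q$ can genuinely be absorbed into the colour classes. Concretely, I would colour by pulling back along $z\mapsto qz$ restricted to multiples of the denominators, and use that multiplicative dilations of partition regular homogeneous-type patterns remain partition regular because the witnessing ultrafilter can be taken to be divisible.
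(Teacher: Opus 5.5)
Your treatment of the first part is correct and follows the paper's argument, which also just inspects the conclusion of \Cref{athm:4pieces} on every 4-piece subpattern. Your explicit check fills in what the paper leaves to the reader: two cross-type pairs through a common index force two equal products, so every triple contains a pair of type~\ref{case:0mixed}, and a short case check pins the triple down to $\set{qxy,qxy+x,qxy+y}$.

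The gap is in the ``moreover'' part, which you flag as the main obstacle and leave unresolved. The right rescaling acts on both variables: $(x,y)\mapsto(qx,qy)$ multiplies each of the five pieces by exactly $q$. So over $\mathbb Q$ the two patterns are trivially equivalent, and over $\mathbb N$ the only issue is integrality of the rescaled witnesses. You justify integrality only by saying the witnessing ultrafilter ``can be taken to be divisible'', by analogy with \Cref{rem:qxy} and \Cref{co:fsratiosrescaled}. But there the ultrafilter is \emph{chosen} to contain every $n\mathbb N$, whereas here you are handed an arbitrary witness. No homogeneity argument is available either: the pattern mixes degrees $1$ and $2$, so multiplying a witness by $N$ does not produce a witness. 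Nor is divisibility a general feature of PR patterns: $x,y,x+y-1$ is PR, yet its monochromatic solutions for colourings refined by residues mod $N$ all have $x\equiv y\equiv 1\pmod N$.

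The missing observation is that this particular pattern \emph{forces} divisibility. Suppose $\alpha\sim\beta\sim q\alpha\beta\sim q\alpha\beta+\alpha\sim q\alpha\beta+\beta$ with $\alpha\in\star\mathbb N$. For every prime $p$, all these elements have the same class $\rho$ in $\mathbb Z_p$. Additivity of the class map applied to $q\alpha\beta+\alpha$ gives $\rho=\rho+\rho$ (the paper writes $q\rho^2+\rho=q\rho^2$), so $\rho=0$. Hence $\alpha,\beta$ are divisible by every natural number, so $q\alpha,q\beta\in\star\mathbb N$, and they witness $x,y,xy,xy+x,xy+y$. The converse is the same computation applied to witnesses of the clean pattern, followed by division by $q$.

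In the standard language you were aiming for, refine the colouring by residues modulo $N$, where $N$ is the product of the numerator and denominator of $q$. The monochromatic $x,qxy,qxy+x$ (resp.\ $x,xy,xy+x$) then share a residue $r$ with $r\equiv 2r$, so $N$ divides $x$ and $y$. This is exactly what makes your pull-back along $z\mapsto qz$ (and along $z\mapsto z/q$ for the converse) applicable to the solutions you obtain.
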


Furthermore, we have the following 4-piece analogue of \Cref{athm:newC}, which holds without assuming a priori that we are in the conclusion of \Cref{athm:cosq}.

\begin{alphthm}\label{athm:div}
  Let $q\in \mathbb Q_{>0}$, $a,b\in \mathbb N$ and $m,n\in \mathbb Z$.
Assume that $q(ax+n)(by+m)\ne q(bx+m)(ay+n)$. The pattern
  \begin{equation*}
      x,\quad y,\quad q(ax+n)(by+m), \quad q(bx+m)(ay+n)
    \end{equation*}
    is partition regular if and only if the pattern
    \begin{equation*}
      x,\quad y,\quad (x+qbn)(y+qam),\quad  (x+qam)(y+qbn)
    \end{equation*}
is partition regular and there is $t\in \mathbb Z$ such that point~\ref{point:newCa} or point~\ref{point:newCb} of \Cref{athm:newC} holds.
\end{alphthm}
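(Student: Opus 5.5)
We reduce \Cref{athm:div} to \Cref{athm:newC} together with the four-piece necessary condition \Cref{athm:4pieces}, using the same change of variables as in the three-piece case. Write $P_1=q(ax+n)(by+m)$ and $P_2=q(bx+m)(ay+n)$. Expanding gives
\[
P_1=qab\Bigl(x+\tfrac na\Bigr)\Bigl(y+\tfrac mb\Bigr),\qquad
P_2=qab\Bigl(x+\tfrac mb\Bigr)\Bigl(y+\tfrac na\Bigr).
\]
So both pieces share the leading coefficient $qab$ and differ only by swapping the two shifts between $x$ and $y$.

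\emph{Necessity.} First, every subconfiguration of a partition regular configuration is partition regular. So the three-piece patterns $x,y,P_1$ and $x,y,P_2$ are both partition regular. Since $P_1\neq P_2$ and the $xy$-coefficient is $qab\ne 0$, the configuration is not of the degenerate type of case~\ref{case:0mixed} of \Cref{athm:4pieces} unless $nm=0$. I would handle that subcase separately: when, say, $n=0$, the pieces are $qax(by+m)$ and $q(bx+m)ay$, and one can directly check whether they are of the form $qxy+x$ or $qxy+y$.

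When $nm\neq0$, \Cref{athm:cosq} applied to $x,y,P_1$ produces $t$ with $qam,qbn$ consecutive squares or consecutive pronic numbers. Applying it to $x,y,P_2$, where the roles of $(a,n)$ and $(b,m)$ are exchanged, gives the same condition with $qam$ and $qbn$ swapped. The point is that $t$ from $P_1$ already puts us in the hypothesis of \Cref{athm:newC}. That theorem then yields the divisibility conditions \ref{point:newCa} or \ref{point:newCb}, and it also yields that $x,y,(x+qbn)(y+qam)$ is partition regular.

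The real content is upgrading this to partition regularity of the four-piece normalized pattern $x,y,(x+qbn)(y+qam),(x+qam)(y+qbn)$. For this I would not pass through the three-piece statement, but repeat its proof. The proof of \Cref{athm:newC} should go through an explicit affine substitution $x=\alpha x'+\beta$, $y=\gamma y'+\delta$. In ultrafilter language, this means replacing $u$ by an affine image, with the divisibility conditions ensuring that a suitable class is a large set. Under that substitution $P_1$ becomes a fixed rational multiple of $(x'+qbn)(y'+qam)$.

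The key computation is that the same substitution applied to $P_2$ gives the same multiple of $(x'+qam)(y'+qbn)$. This should follow by the symmetry $(a,n)\leftrightarrow(b,m)$, which swaps $qam$ and $qbn$ and preserves both conditions \ref{point:newCa} and \ref{point:newCb}. The remaining multiplicative constant is then absorbed by the usual rescaling invariance, via multiplicatively idempotent-type ultrafilters as in \Cref{rem:qxy}.

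\emph{Sufficiency.} This is the reverse substitution: an ultrafilter witnessing the normalized four-piece pattern is pushed forward along the inverse affine map. The divisibility condition is exactly what guarantees integrality of the preimages.

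\emph{Expected obstacle.} I expect the main difficulty to be the bookkeeping: checking that a single substitution simultaneously normalizes both pieces. In particular, the $t$ obtained from the $P_1$ analysis, for which case \ref{point:newCa} has $qam=t^2$ and $qbn=(t+1)^2$, must be compatible after swapping. Under the swap that case becomes $qam=(t+1)^2$, $qbn=t^2$, which corresponds to replacing $t$ by $-t-1$. Verifying that the integrality conditions survive this reparametrization is where I would be most careful.
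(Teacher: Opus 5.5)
Your argument has a genuine gap when $nm=0$. You get the parameter $t$ only from \Cref{athm:cosq}, which requires $nm\ne 0$. For $nm=0$ you appeal to \Cref{athm:4pieces}, but that theorem says nothing about these patterns. Taking $(q_2,a_2,n_2,b_2,m_2)=(q,b,m,a,n)$, the pattern of \Cref{athm:div} \emph{always} falls in case~\ref{case:cross} of \Cref{athm:4pieces}. Checking whether the pieces have the form $qxy+x$ is beside the point; they never do.

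Since the two pieces differ, exactly one of $n,m$ vanishes, say $n=0\ne m$. What you must prove is $qam\in\set{1,2}$, i.e.\ that point~(a) or point~(b) holds with $t=-1$. Without this you cannot invoke \Cref{athm:newC} or \Cref{lemma:witcasea,lemma:witcaseb} at all. The three-piece subpattern $x,y,qax(by+m)$ does not give it, since its partition regularity is open (\Cref{q:3pc}); both product pieces must be used together.

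The paper rewrites the witnesses as $h\alpha\sim h\beta\sim A\alpha(B\beta+M)\sim A(B\alpha+M)\beta$ with $(h,A)=1=(B,M)$, so the goal becomes $h=AM$ or $2h=AM$, and fixes a large prime $p$. If $v_p(\alpha),v_p(\beta)>0$, then \Cref{lemma:smodcases} gives $h=AM$ directly. Otherwise set $\gamma\coloneqq AB\alpha+AM-h$ and $\delta\coloneqq AB\beta+AM-h$. Then $v_p(\gamma),v_p(\delta)>0$ and
\[
h\gamma\sim h\delta\sim \gamma\delta+h\gamma+(h-AM)\delta\sim \gamma\delta+(h-AM)\gamma+h\delta .
\]
Applying \Cref{lemma:smodcases} according to whether $v_p(\gamma)$ is larger than, smaller than or equal to $v_p(\delta)$ yields $h=AM$ or $2h=AM$. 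When $v_p(\gamma)<v_p(\delta)$ the third piece imposes no constraint at all; only the fourth forces $h=AM$. This is exactly why three-piece results cannot replace this step.

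You also misplace where the content lies in the rest of the argument. The substitution is just $x\mapsto qab\,x$ (part~(A) of \Cref{lemma:witcasea,lemma:witcaseb}). Indeed $qab\,P_1(x,y)=(qabx+qbn)(qaby+qam)$ and $qab\,P_2(x,y)=(qabx+qam)(qaby+qbn)$, so both pieces are normalised at once with no bookkeeping. The factor $qab$ multiplies all four pieces, and $\sim$ on $\star\mathbb Q$ is preserved by multiplication by a fixed rational, so it is harmless. Multiplicative idempotents play no role.

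The real issue is integrality of witnesses in both directions, and it is not ``exactly the divisibility condition''. Knowing $d\in\mathbb Z$ and $\alpha_2\in\star\mathbb N$ does not by itself make $\alpha_2/qab$ a hypernatural. You also need the class of $\alpha_1$ in every $\mathbb Z_p$ to be $0$. In case~(a) this follows from $\rho=\rho^2+\rho$. In case~(b) it follows from $\rho=\rho^2$ together with excluding $\rho=1$, via the Claim-type argument in the proof of \Cref{lemma:witcaseb}. The other direction needs the analogous analysis for the class of $\alpha_0$.

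Part~(C) of these lemmas concerns witnesses of the three-piece subpattern, so it transfers verbatim to the four-piece patterns. Together with \Cref{rem:symchange}, which settles your worry about $t$ versus $-t-1$, this is the paper's whole argument once a suitable $t$ is known to exist.
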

We leave open the partition regularity of patterns that are not excluded by the theorems above, notable examples being  $x, y, x(y+1),(x+1)y$  and  $x, y, x(y+2),(x+2)y$. We collect these and other questions in \Cref{sec:froq}.

\paragraph{Methodology.} All results of this paper were originally obtained by nonstandard-analytic methods. The key observation is that, in model-theoretic parlance, partition regularity of a formula $\phi(\bla x1,k)$ is equivalent to the existence, in an elementary extension $\star \mathbb N$ of $\mathbb N$, of a solution of the formula
with all coordinates of the same type (\Cref{thm:NScharPR}). As well-known, this is in turn equivalent to the existence of an ultrafilter on $\mathbb N^k$ containing the set of solutions of $\phi(\bla x1,k)$ in $\mathbb N$ and projecting on every coordinate to the same ultrafilter on $\mathbb N$. The basics of these techniques are recalled in \Cref{sec:NS}.

With some effort, our proofs may be translated in standard ultrafilter terms, making no use of nonstandard analysis nor of model theory.  We give such a presentation of the proof of \Cref{athm:fsrations}, that makes use of a variant of the Milliken--Taylor Theorem from \cite{bergelsonPolynomialExtensionsMillikenTaylor2014} involving idempotent ultrafilters.

Nevertheless, when dealing with proofs of necessary conditions, the nonstandard approach yields more streamlined and---we believe---conceptually clearer proofs. It will become evident in \Cref{sec:prodshifts,sec:twopos} that there are obstructions of $p$-adic nature to the partition regularity of certain patterns. Said obstructions are particularly visible from the nonstandard viewpoint; we exemplify this by proving \Cref{thm:qcd} by these methods first, and then translating the proof in standard terms in an appendix, \Cref{sec:appendix}.

\paragraph{Funding}
The authors were supported by the project PRIN 2022 ``Logical methods in combinatorics'', 2022BXH4R5, Italian Ministry of University and Research (MUR). M.~Ragosta is supported by project 25-15571S of the Czech Science Foundation (GAČR). This work has been supported by Charles University Research Centre programme No.UNCE/24/SSH/026. We acknowledge the MUR Excellence Department Project awarded to the Department of Mathematics, University of Pisa, CUP I57G22000700001. M.~Di Nasso is a member of the INdAM research group GNSAGA.

\section{Quotients of sums}\label{sec:qos}

This section is devoted to the proof of \Cref{athm:fsrations}. We assume familiarity with basic notions around ultrafilters, and refer the reader to \cite{hindmanAlgebraStoneCechCompactification2011} for an extensive treatment.

\begin{notation}\label{notation} We adopt the following conventions.
  \begin{enumerate}
  \item\label{point:nontrivialsol} If we say that the pattern $f_1(\bla x1,n), \ldots, f_k(\bla x1,n)$ is \emph{partition regular} (or simply \emph{PR}) we mean that, for every finite colouring of $\mathbb N$, the set of monochromatic $k$-tuples of the form $(f_1(\bla a1,n),\ldots, f_k(\bla a1,n))$ is infinite. E.g., in every colouring of $\mathbb N$, there is a monochromatic solution of  $x,y,3x-2y,5y/x$ obtained by setting $x=y=5$, but as we will see in \Cref{thm:qcd} this pattern is not PR in the sense mentioned above.  We do this in order to avoid having to handle separately trivial cases such as the one just mentioned.
  \item By definition, saying that the pattern $x,y,f(x,y)$ is PR is the same as saying that the equation $z=f(x,y)$ is PR. We use the two terminologies interchangeably.
  \item By $D(x,y)$ we denote the function $\mathbb N^2\to \mathbb N$ sending $(x,y)$ to $y/x$ if $x\mid y$, and to $1$ otherwise.
  \item  As usual, $\operatorname{FS}(x_i\mid  i\in \mathbb N)$ denotes $\set{\bla x{i_1}+{i_k}\mid k\in \mathbb N, \bla i1<k}$.
  \item If $F,G\in \nepfin$, by $F<G$ we mean $\max F<\min G$.
  \item If $u\in \beta \mathbb N^k$ is an ultrafilter and $f\from \mathbb N^k\to \mathbb N$ is a function, we denote by $f(u)$ the pushforward $\set{A\subseteq \mathbb N \mid f^{-1}(A)\in u}$ of $u$ along $f$.
  \end{enumerate}
\end{notation}

\begin{rem}\label{rem:idempsd}
Let $u\in \beta \mathbb N$ be an additive idempotent.
\begin{enumerate}
\item\label{item:1}  The ultrafilter $u$ contains every $n \mathbb N$.
\item The ultrafilter $u$ is \emph{self-divisible}, that is, $\set{(a,b)\mid a\text{ divides } b}\in u\otimes u$.  
\item The ultrafilter $D(u\otimes u)$ contains every $n \mathbb  N$. In particular, it is not the principal ultrafilter on $1$.
\end{enumerate}
\end{rem}
\begin{proof}\*
  \begin{enumerate}
  \item This is well-known (and easy to prove).
  \item This is~\cite[Example~5.1(8)]{dinassoSelfdivisibleUltrafiltersCongruences2025}, but it can be easily shown as follows. We have $\set{(a,b)\mid  a\text{ divides } b}\in u\otimes u$ if and only if $\set{n\mid  n \mathbb N\in u}\in u$. It now suffices to apply point~\ref{item:1}.
  \item We have $n \mathbb N\in D(u\otimes u)$ if and only if $\set{(a,b)\mid n\text{ divides } b/a}\in u\otimes u$, if and only if $\set{a\mid \set{b\mid b\in an \mathbb N} \in u}\in u$ and we conclude by point~\ref{item:1}.\qedhere
  \end{enumerate}
\end{proof}

\begin{fact}\label{fact:bpemt}
Let $A\subseteq \mathbb N^2$. There is a sequence $(x_i)_{i\in \mathbb N}$ such that \[\set*{\left(\sum_{i\in F} x_i, \sum_{i\in G} x_i\right)\Biggm| F,G\in \nepfin, F<G}\subseteq A\] if and only if there is an idempotent  $u\in \beta \mathbb N$ such that $A\in u\otimes u$.
\end{fact}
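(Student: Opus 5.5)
The statement is an iff, and I would prove the two directions separately. For the forward direction I would build an ultrafilter on $\mathbb N^2$ from the sequence. For the converse I would use a Milliken--Taylor-type construction along an idempotent, in the spirit of the proof of Hindman's Theorem via Galvin--Glazer, but run on pairs of blocks.

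\emph{($\Rightarrow$)} Suppose $(x_i)_{i\in\mathbb N}$ satisfies the displayed inclusion, and set $s_F\coloneqq\sum_{i\in F}x_i$.
\begin{enumerate}
\item For each $n$, let $B_n\coloneqq\operatorname{FS}(x_i\mid i\ge n)$. By Hindman's characterisation, $\bigcap_n \overline{B_n}$ is a closed subsemigroup of $(\beta\mathbb N,+)$. This uses the standard fact that for $a\in B_n$ there is $m$ with $a+B_m\subseteq B_n$; take $m>\max F$ when $a=s_F$.
\item By Ellis' lemma this subsemigroup contains an idempotent $u$.
\item Fix $a=s_F$ and take $m>\max F$. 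Then $\{b\mid (a,b)\in A\}$ contains $B_m$, so it belongs to $u$.
\item Since $B_n\in u$ for every $n$, the set of such $a$ is $B_1\in u$. Hence $A\in u\otimes u$.
\end{enumerate}

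\emph{($\Leftarrow$)} Let $u$ be an idempotent with $A\in u\otimes u$. Write $A_a\coloneqq\{b\mid (a,b)\in A\}$ and $A^\ast\coloneqq\{a\mid A_a\in u\}\in u$. Recall that for $C\in u$ we have $C^\star\coloneqq\{c\in C\mid C-c\in u\}\in u$, and $(C^\star)-c\in u$ for $c\in C^\star$.

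The plan is to choose $x_1<x_2<\dots$ recursively while maintaining two invariants:
\begin{itemize}
\item[(i)] every $s_F$ with $F\subseteq\{1,\dots,n\}$ lies in $(A^\ast)^\star$;
\item[(ii)] for every such $F$ the set $D_{n,F}\coloneqq\bigcap\{A_{s_G}-s_H \mid G<H,\ G\cup H\subseteq\{1,\dots,n\},\ \max G<\min F\}$, suitably intersected with the $\star$-operation, is in $u$.
\end{itemize}
The natural way to organise this is to maintain a single set $E_n\in u$ such that any new element $x_{n+1}\in E_n$ has three properties:
\begin{itemize}
\item[(a)] it keeps all new sums $s_F+x_{n+1}$ in $(A^\ast)^\star$;
\item[(b)] it lies in $(A_{s_G})^\star-s_H$ for all $G<H\subseteq[1,n]$;
\item[(c)] it lies in $(A_{s_G})^\star$ for all nonempty $G\subseteq[1,n]$.
\end{itemize}
Since $s_G\in A^\ast$, each $A_{s_G}\in u$, and since $u$ is idempotent each shifted set $C^\star-c$ with $c\in C^\star$ is in $u$. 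So $E_n$ is a finite intersection of members of $u$, hence in $u$, and it is in particular nonempty. Choose $x_{n+1}\in E_n$ larger than $x_n$.

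Then any pair $F<G$ with $\max G=n+1$ is handled by (b) or (c) at step $n+1$. Specifically, $G=H\cup\{n+1\}$ with $H$ possibly empty, and $s_G=s_H+x_{n+1}\in A_{s_F}$, so $(s_F,s_G)\in A$. Pairs with $\max G\le n$ were already handled at earlier steps.

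The main obstacle is checking that the membership conditions propagate. When we add $x_{n+1}$ to $s_H$ with $s_H\in (A_{s_F})^\star$, the new sum must again lie in $(A_{s_F})^\star$ and not merely in $A_{s_F}$. This is exactly the lemma $c\in C^\star\Rightarrow C^\star-c\in u$ for idempotent $u$, which is the Galvin--Glazer step; I would state and prove it first and then verify the bookkeeping.

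Alternatively, both directions are essentially \cite[Theorem~2.?]{bergelsonPolynomialExtensionsMillikenTaylor2014}, and I would cite the Milliken--Taylor theorem for $u\otimes u$ if allowed.
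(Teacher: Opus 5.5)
Your proof is correct, but it differs from the paper's, which contains no argument of its own: the paper quotes the Fact as a special case of \cite[Theorem~1.17]{bergelsonPolynomialExtensionsMillikenTaylor2014}, the theorem number you were reaching for. You instead prove both directions directly. For ($\Rightarrow$) you take any idempotent in the closed semigroup $\bigcap_n\overline{\operatorname{FS}(x_i\mid i\ge n)}$ and check $\set{a\mid A_a\in u}\supseteq\operatorname{FS}(x_i\mid i\in\mathbb N)\in u$, which is exactly membership in $u\otimes u$ under the paper's convention (the one used in \Cref{rem:idempsd}). For ($\Leftarrow$) you run the Galvin--Glazer recursion on pairs of blocks, i.e.\ the two-block case of the usual proof of Milliken--Taylor.

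Two bits of bookkeeping should be made explicit. Write $\widehat C\coloneqq\set{c\in C\mid C-c\in u}$ for your starred sets. First, the invariant you need in (ii) is simply $s_H\in\widehat{A_{s_G}}$ for all nonempty $G<H\subseteq\set{1,\dots,n}$; this is what puts the sets $\widehat{A_{s_G}}-s_H$ of (b) in $u$. Your formula for $D_{n,F}$, with $\max G<\min F$, is garbled, though your ``main obstacle'' paragraph names the right condition. Second, (a) must also cover $F=\emptyset$, i.e.\ $x_{n+1}\in\widehat{A^\ast}$, so that $A_{s_{\set{n+1}}}\in u$ at later steps. With these, every pair $G<H$ is settled at step $\max H$. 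Moreover $E_n\in u$ is infinite, since additive idempotents of $\beta\mathbb N$ are nonprincipal, so $x_{n+1}>x_n$ can be chosen.

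Your route is self-contained, needing only Ellis' lemma and the two standard facts $C\in u\Rightarrow\widehat C\in u$ and $c\in\widehat C\Rightarrow\widehat C-c\in u$. It also shows that in ($\Rightarrow$) any idempotent in that semigroup works. The citation is shorter and places the Fact inside a far more general polynomial Milliken--Taylor theorem.
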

\begin{proof}
This is a special case of~\cite[Theorem~1.17]{bergelsonPolynomialExtensionsMillikenTaylor2014}.
\end{proof}

\begin{thm}\label{thm:fsandratios}
For every finite colouring $\fcol$ there are $m\le r$ and sequences $(x_i)_{i\in \mathbb N}$ and $(y_j)_{i\in \mathbb N}$ such that
\begin{equation}
  \label{eq:1}
  \operatorname{FS}(x_i\mid  i\in \mathbb N) \cup   \set*{\frac{\sum_{i\in G} x_i}{\sum_{i\in F} x_i}
    \Biggm| 
    F,G\in \nepfin,  F<G
  }\subseteq C_m
\end{equation}
  and 
  \begin{equation}
    \label{eq:2}
      \set[\Bigg]{\sum_{\ell\in F}\prod_{i=k}^\ell y_i \Biggm| F\in \nepfin, k\le \min F
      }\subseteq C_m.
  \end{equation}
  Moreover, if $u$ is any additively idempotent ultrafilter, we may take as $C_m$ the colour belonging to $D(u\otimes u)$.
\end{thm}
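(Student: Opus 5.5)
The plan is to produce a single additively idempotent ultrafilter $w$ whose own ``ratio ultrafilter'' is again $w$, and then read off \eqref{eq:1} from \Cref{fact:bpemt} applied to $w$. Concretely, fix an additive idempotent $u$ and set $w\coloneqq D(u\otimes u)$. Let $C_m$ be the colour with $C_m\in w$; by \Cref{rem:idempsd} it is not the colour of $1$ in a trivial way. The key claims are the following.
\begin{enumerate*}[label=(\roman*)]
\item $w$ is itself an additive idempotent.
\item $D(w\otimes w)=w$.
\end{enumerate*}
Granting these, the set
\[A\coloneqq\set{(a,b)\mid a\in C_m,\ b\in C_m,\ a\mid b,\ b/a\in C_m}\]
lies in $w\otimes w$. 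Indeed, $a\in C_m$ holds $w$-almost surely, $b\in C_m$ holds $w$-almost surely for each $a$, self-divisibility comes from \Cref{rem:idempsd}, and $b/a\in C_m$ is exactly claim (ii). \Cref{fact:bpemt} then yields $(x_i)$ with all pairs $(\sum_F x_i,\sum_G x_i)$, $F<G$, in $A$. Since every finite sum appears as a first coordinate of some such pair, this gives $\operatorname{FS}(x_i)\subseteq C_m$ together with all ratios, which is \eqref{eq:1}.

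To verify (i) and (ii) I would pass to the nonstandard picture (\Cref{thm:NScharPR} and \Cref{sec:NS}). Take $\alpha_1<\alpha_2<\alpha_3<\dots$ in $\star\mathbb N$ realising $u\otimes u\otimes\cdots$. Then $\alpha_2/\alpha_1$ realises $w$, and so does $(\alpha_2+\alpha_3)/\alpha_1=\alpha_2/\alpha_1+\alpha_3/\alpha_1$, because $\alpha_2+\alpha_3$ realises $u+u=u$ independently over $\alpha_1$. The point to check is that $(\alpha_2/\alpha_1,\alpha_3/\alpha_1)$ realises $w\otimes w$. Since $\alpha_3$ realises $u$ over $(\alpha_1,\alpha_2)$, one needs that the type of $\alpha_3/\alpha_1$ over $\alpha_2/\alpha_1$ is the ``generic'' one. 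I expect this to follow from the fact that $u$-generic elements divided by a fixed earlier element remain $w$-generic. For (ii), the analogous computation uses $\alpha_3/\alpha_1$ divided by $\alpha_2/\alpha_1$, which is $\alpha_3/\alpha_2$ and again realises $w$. Here too the crux is the independence bookkeeping showing that the pair realises $w\otimes w$ and not some other ultrafilter with the same marginals.

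For \eqref{eq:2} I would iterate the same idea multiplicatively. Take $\alpha_1<\alpha_2<\dots$ as above and set $\eta_i\coloneqq\alpha_{i}/\alpha_{i-1}$, so that $\prod_{i=k}^{\ell}\eta_i=\alpha_\ell/\alpha_{k-1}$. A sum $\sum_{\ell\in F}\alpha_\ell/\alpha_{k-1}$ is then a ratio of $\alpha_{k-1}$ to an element of the same type $u$ lying above it, hence realises $w$. Transferring this back by a standard diagonal/inductive construction of the $y_i$ (choosing $y_{i}$ inside the relevant $w$-large sets at each stage, exactly as in the proof of Hindman's Theorem) gives \eqref{eq:2}.

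The main obstacle is the independence verification in claims (i) and (ii). The ``moreover'' clause forces the colour to be the one in $D(u\otimes u)$, so there is no freedom to pick a more convenient idempotent. If that bookkeeping fails in general, I would fall back to proving directly that $\set{(a,b)\mid a,b,b/a\in C_m}$ contains $v\otimes v$ for a suitable idempotent $v$ built from $u$ (e.g.\ a limit of the $n\cdot u$).
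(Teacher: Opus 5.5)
Your argument depends on claims (i) and (ii), and both are false for general additive idempotents $u$. The independence step you flag is exactly where it breaks. The pair $(\alpha_2/\alpha_1,\alpha_3/\alpha_1)$ does not realise $w\otimes w$, because the two ratios share the denominator $\alpha_1$. In $w\otimes w$, by contrast, the second ratio $\gamma_2/\gamma_1$ has \emph{both} terms generic over the first.

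For a concrete counterexample, take an additive idempotent $u$ with $\operatorname{FS}(2^{4^n}\mid n\ge m)\in u$ for every $m$. For $u\otimes u$-almost all $(a,b)$ we have $a=\sum_{i\in F}2^{4^i}$, $b=\sum_{i\in G}2^{4^i}$, $F<G$ and $a\mid b$. Hence $v_2(b/a)=4^{\min G}-4^{\min F}$, whose lowest nonzero base-$4$ digit is $3$. For $w\otimes w$-almost all $(c,d)$, instead, $c\mid d$ and $v_2(d/c)=(4^{k'}-4^{l'})-(4^k-4^l)$ with $l<k<l'<k'$, whose lowest nonzero base-$4$ digit is $1$. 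Thus $D(w\otimes w)\ne w$. For the $2$-colouring according to this digit, your set $A$ is not in $w\otimes w$.

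Claim (i) fails for the same $u$. The ultrafilter $w$ concentrates on those $x$ with $(v_2(x),\lfloor\log_2 x\rfloor)=(4^k-4^l,\,4^j-4^i+e)$, where $\abs e\le 1$ and $i<k$; these representations are unique once $i\ge 1$. But for $w\otimes w$-almost all $(c,d)$ we have $v_2(c+d)=v_2(c)$, while $\lfloor\log_2(c+d)\rfloor$ has this form with $i>k$. So $w$ is not idempotent, and \Cref{fact:bpemt} cannot be applied to $w$ at all. Your fallback does not rescue this either, since the ``moreover'' clause pins the colour to $D(u\otimes u)$.

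The missing idea is that you need no structure on $w$ beyond $C_m\in w$, and the finite sums come for free. Apply \Cref{fact:bpemt} to $u$ itself and to the set $D^{-1}(C_m)\cap\set{(a,b)\mid a\text{ divides }b}$. This set lies in $u\otimes u$ by the choice of $C_m$ and \Cref{rem:idempsd}. You obtain $(z_i)$ with $\sum_{i\in G}z_i/\sum_{i\in F}z_i\in C_m$ for all $F<G$.

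Now set $x_n\coloneqq z_{n+1}/z_1$. Each $\sum_{i\in F}x_i=\sum_{i\in F}z_{i+1}/z_1$ is itself one of these ratios, with $\set{1}<\set{i+1\mid i\in F}$. Ratios of sums of the $x_i$ are ratios of sums of the $z_i$. Setting $y_n\coloneqq z_{n+1}/z_n$, the products telescope to $\prod_{i=k}^{\ell}y_i=z_{\ell+1}/z_k$, which gives \eqref{eq:2}.

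Your nonstandard identity $\prod_{i=k}^{\ell}\eta_i=\alpha_\ell/\alpha_{k-1}$ is exactly this telescoping. However, the standard transfer has to go through the $z_i$ supplied by \Cref{fact:bpemt} for $u$. It cannot go through the unspecified Hindman-style construction inside $w$-large sets that you sketch.
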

\begin{proof}  
  Let $u$ be an additively idempotent ultrafilter and let $C_m$ be as in the ``moreover'' part. By \Cref{rem:idempsd} the set $\set{(a,b)\mid  a\text{ divides } b}$ belongs to $u\otimes u$.  Apply \Cref{fact:bpemt} to the set $D\inverse(C_m)\cap \set{(a,b)\mid  a\text{ divides } b}$, obtaining a sequence $(z_n)_{z\in \mathbb N}$ such that, whenever $F,G\in \mathcal P_{\mathrm{fin}}(\mathbb N)\setminus\set\emptyset$ are such that $F<G$, we have
 \[
   \frac{\sum_{i\in G}z_i}{\sum_{i\in F}z_i}\in C_m.
 \]
  To conclude the proof, we simply set $x_n\coloneqq z_{n+1}/z_1$ and $y_n\coloneqq z_{n+1}/z_n$. Then,  for every     $F,G\in \nepfin$,  we have
\[
    \sum_{i\in F} x_i=\frac{\sum_{i\in F} z_{i+1}}{z_1} \qquad\text{ and }\qquad
    \frac{\sum_{i\in G} x_i}{\sum_{i\in F} x_i}=\frac{\frac 1{z_1}\sum_{i\in G} z_{i+1}}{\frac 1{z_1}\sum_{i\in F} z_{i+1}}=\frac{\sum_{i\in G} z_{i+1}}{\sum_{i\in F} z_{i+1}}.
\]
 It follows that
  \begin{multline*}
    \operatorname{FS}(x_i\mid  i\in \mathbb N) \cup   \set*{\frac{\sum_{i\in G} x_i}{\sum_{i\in F} x_i}
      \Biggm|
      F,G\in \nepfin,  F<G
    }
\\
    \subseteq \set*{\frac{\sum_{i\in G} z_i}{\sum_{i\in F} z_i}
      \Biggm|
      F,G\in \nepfin,  F<G
    }
    \subseteq C_m.
  \end{multline*}
  Moreover, if $F\in \nepfin$ and $k\le \min F$, we analogously have
  \[
    \sum_{\ell\in F}\prod_{i=k}^\ell y_i =\sum_{\ell\in F}\prod_{i=k}^\ell \frac{z_{i+1}}{z_i} =\sum_{\ell \in F}\frac {z_{\ell+1}}{z_k}=\frac{\sum_{\ell \in F}z_{\ell+1}}{z_k}\in C_m.\qedhere
  \]
\end{proof}
See \cite[Section~17.3]{hindmanAlgebraStoneCechCompactification2011} for other results involving sums of products of elements from an infinite sequence. Note that the configurations there are things of the form e.g.\ $x_1+x_2x_3+x_4+x_8x_{11}$, while those in the previous theorem look like $x_1+x_1x_2x_3+x_1x_2x_3x_4$.

\begin{co}[Goswami]\label{co:goswami}
  The pattern $x,y,xy,x(y+1)$ is PR.
\end{co}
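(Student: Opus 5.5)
We obtain the corollary from \Cref{thm:fsandratios} through the change of variables $a=x$, $b=y/x$ mentioned in the introduction. Fix a finite colouring $\fcol$ and let $C_m$ and $(x_i)_{i\in\mathbb N}$ be as in \Cref{thm:fsandratios}. For indices $i<j<k$, set
\[
a\coloneqq x_i,\qquad b\coloneqq \frac{x_j+x_k}{x_i+x_j}.
\]
The quotient defining $b$ is a natural number lying in $C_m$, by \eqref{eq:1} with $F=\set{i,j}<G=\set{k}$... wait, this needs $F<G$ and the numerator must be indexed by $G$; that pairing does not fit. We therefore use the cleaner choice given by \eqref{eq:1} directly.

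Take $F<G$ nonempty finite sets and put
\[
s_F\coloneqq\sum_{i\in F}x_i,\qquad s_G\coloneqq\sum_{i\in G}x_i .
\]
Define $a\coloneqq s_F$ and $b\coloneqq s_G/s_F$. By \eqref{eq:1}, both $a$ and $b$ lie in $C_m$, and $b\in\mathbb N$. Then $ab=s_G\in\operatorname{FS}(x_i\mid i\in\mathbb N)\subseteq C_m$, and
\[
a(b+1)=s_F+s_G=\sum_{i\in F\cup G}x_i\in C_m .
\]
Hence $(a,b,ab,a(b+1))$ is a monochromatic instance of the pattern $x,y,xy,x(y+1)$.

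It remains to check the nontriviality requirement of \Cref{notation}.\ref{point:nontrivialsol}, namely that there are infinitely many such tuples. Since $(x_i)$ is a sequence of natural numbers, varying $F=\set{i}$ over singletons yields infinitely many distinct values of $a=x_i$, provided the $x_i$ are not eventually constant. We can arrange this: if infinitely many $x_i$ coincide, pass to a subsequence of equal terms $x$. Then $\operatorname{FS}$ contains $nx$ for every $n$, which already gives infinitely many distinct values of $ab=s_G$ as $\abs{G}$ grows. Alternatively, $\sum_{i\in F\cup G}x_i$ is unbounded as $F\cup G$ grows. In either case we obtain infinitely many distinct tuples.

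The only delicate point is this distinctness check, and it is routine. The main content is already in \Cref{thm:fsandratios}; the corollary is essentially the observation that $a(b+1)=ab+a$ corresponds to the finite sum over $F\cup G$.
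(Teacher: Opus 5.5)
Your proof is correct and is the paper's argument: the paper takes the monochromatic $s_F,\ s_G,\ s_F+s_G,\ s_G/s_F$ supplied by \Cref{thm:fsandratios} and substitutes $x\coloneqq s_F$, $y\coloneqq s_G/s_F$, exactly as you do, so that $xy=s_G$ and $x(y+1)=s_{F\cup G}$. In a write-up, delete the abandoned first attempt, and replace the muddled case distinction in the nontriviality check with the single observation that $ab=s_G\ge\abs G$ is unbounded; in fact \eqref{eq:1} forces $x_1+\dots+x_n\mid x_{n+1}$, so infinitely many equal terms cannot occur anyway.
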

\begin{proof}
  By \Cref{thm:fsandratios} the pattern $a,b,a+b,b/a$ is PR. Apply the change of variables $x\coloneqq a$, $y \coloneqq b/a$.
\end{proof}

\begin{co}\label{co:fsratiosrescaled}
  For every finite colouring $\fcol$ and $q\in \mathbb Q_{>0}$ there are $m\le r$ and a sequence $(x_i)_{i\in \mathbb N}$ such that
  \begin{equation*}
  \operatorname{FS}(x_i\mid  i\in \mathbb N) \cup   \set*{q\cdot\frac{\sum_{i\in G} x_i}{\sum_{i\in F} x_i}
    \Biggm|
    F,G\in \nepfin,  F<G
  }\subseteq C_m.
\end{equation*}
\end{co}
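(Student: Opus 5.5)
We will run the proof of \Cref{thm:fsandratios} again, with the function $D$ replaced by a rescaled version of it. The scaling factor $q$ will be built into the sequence $(x_i)$ itself, so that finite sums and rescaled ratios both become rescaled ratios of sums of one auxiliary sequence.

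Write $q=a/b$ with $a,b\in\mathbb N$. Let $D_q\from\mathbb N^2\to\mathbb N$ send $(s,t)$ to $at/(bs)$ if $bs$ divides $at$, and to $1$ otherwise. Fix an additively idempotent ultrafilter $u$ and let $C_m$ be the colour belonging to $D_q(u\otimes u)$.

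First we check that $E\coloneqq\set{(s,t)\mid bs\text{ divides }t}$ belongs to $u\otimes u$. Membership holds if and only if $\set{s\mid bs\mathbb N\in u}\in u$. By \Cref{rem:idempsd}.\ref{item:1} this set is all of $\mathbb N$. Since $E$ is contained in $\set{(s,t)\mid bs\text{ divides }at}$, the latter set is also in $u\otimes u$. Hence
\[
A\coloneqq D_q\inverse(C_m)\cap\set{(s,t)\mid bs\text{ divides }at}\in u\otimes u.
\]
By \Cref{fact:bpemt} we obtain a sequence $(z_i)_{i\in\mathbb N}$ such that, for all $F<G$ in $\nepfin$, the number $q\sum_{i\in G}z_i/\sum_{i\in F}z_i$ is a natural number lying in $C_m$.

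Now set $x_n\coloneqq q\,z_{n+1}/z_1$. Each $x_n$ is a natural number, because the pair $(z_1,z_{n+1})$ arises from $F=\set1<G=\set{n+1}$, so $bz_1$ divides $az_{n+1}$. We then compute the two kinds of elements in the statement.
\begin{itemize}
\item For any $F\in\nepfin$ we have $\sum_{i\in F}x_i=q\sum_{i\in F}z_{i+1}/z_1$. This is $q$ times a ratio with index sets $\set1<F+1$, so it lies in $C_m$.
\item For $F<G$ we have $q\sum_{i\in G}x_i/\sum_{i\in F}x_i=q\sum_{i\in G}z_{i+1}/\sum_{i\in F}z_{i+1}$, with $F+1<G+1$, so it also lies in $C_m$.
\end{itemize}
This gives the statement of the corollary.

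The only point requiring care is the integrality of the $x_n$ and of the rescaled ratios. It is handled by intersecting with the divisibility set $\set{(s,t)\mid bs\text{ divides }at}$ before invoking \Cref{fact:bpemt}, exactly as self-divisibility was used in the proof of \Cref{thm:fsandratios}.
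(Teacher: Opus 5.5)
Your proof is correct and is essentially the paper's own alternative argument. The paper remarks that one may rerun the proof of \Cref{thm:fsandratios} with $D(u\otimes u)$ replaced by $q\cdot D(u\otimes u)$, which coincides with your $D_q(u\otimes u)$ since the two maps agree on $\set{(s,t)\mid bs\text{ divides }t}\in u\otimes u$; your choice $x_n\coloneqq q\,z_{n+1}/z_1$ and the divisibility intersection fill in the details the paper leaves implicit, and its primary one-line proof via pushing a witness $v$ forward to $q\cdot v\in\beta\mathbb Q$ amounts to the same thing.
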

\begin{proof}
  If $v$ is an ultrafilter witnessing that \Cref{thm:fsandratios} holds, then $q\cdot v\in \beta \mathbb Q$ contains $\mathbb N$ and witnesses that the conclusion holds.

If the reader prefers, they may directly run the proof of \Cref{thm:fsandratios} with $D(u\otimes u)$ replaced by $q\cdot D(u\otimes u)$.
\end{proof}

\Cref{co:fsratiosrescaled} above shows that one can add coefficients in front of the ratios. A natural question is whether the same holds for the linear term. The answer is negative.

\begin{thm}\label{thm:qcd}
  Let $q\in \mathbb Q_{>0}$ and $c,d\in \mathbb Z\setminus\set0$. Then  $x,y,cx+dy,q \cdot y/x$ is PR if and only if $c=d=1$. 
\end{thm}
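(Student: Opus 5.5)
Throughout, a \emph{type-equal} tuple is a tuple of elements of $\star\mathbb N$ that all lie in exactly the same standard colour classes. The ``if'' direction needs no new work: for $c=d=1$ the pattern $x,y,x+y,q\cdot y/x$ is PR by \Cref{co:fsratiosrescaled}. The sequence given there is increasing, so it yields infinitely many monochromatic $4$-tuples, as \Cref{notation}.\ref{point:nontrivialsol} requires. For the ``only if'' direction I would work nonstandardly via \Cref{thm:NScharPR}. Partition regularity gives infinite $\xi,\eta\in\star\mathbb N$ with $q\eta/\xi\in\star\mathbb N$ such that $\xi,\ \eta,\ c\xi+d\eta,\ q\eta/\xi$ are type-equal. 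I would then choose suitable finite colourings to force $d=1$ and then $c=1$.

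\textbf{Step 1: archimedean size and $d=1$.} Since $q\eta/\xi$ has the type of $\xi$, it is infinite. Hence $\eta/\xi$ is infinite and $\eta\gg\xi$, so $(c\xi+d\eta)/\eta$ is infinitely close to $d$. Positivity of $c\xi+d\eta$ then forces $d>0$. If $d\ge2$, I would colour $n$ by the pair
\[
\bigl(\lfloor \log_{\lambda} n\rfloor \bmod 2,\ \lfloor \log_{\lambda} n+\tfrac12\rfloor \bmod 2\bigr),\qquad \lambda=d^{1/3}.
\]
Multiplying by a number infinitely close to $d$ shifts $\log_\lambda$ by an amount infinitely close to $3$. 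At least one of the two floors is not near a boundary, and that floor changes parity. This contradicts $\eta$ and $c\xi+d\eta$ being type-equal, so $d=1$.

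\textbf{Step 2: $p$-adic constraints.} Fix a standard prime $p$ and $k,j\in\mathbb N$. Colour $n=p^{v}w$ with $p\nmid w$ by the pair $(v\bmod k,\ w\bmod p^j)$. Write $\xi=p^{a}u$ and $\eta=p^{b}w$. Type-equality of $\xi,\eta,q\eta/\xi$ gives
\[
a\equiv b\equiv v_p(q)+b-a \pmod k,\qquad u\equiv w\equiv q_p\,w/u \pmod{p^j},
\]
where $q_p$ is the $p$-unit part of $q$. In particular $a\equiv v_p(q)\pmod k$ for every $k$, and congruences of the form $u\equiv q_p^{-1}\cdot(\text{something}\equiv1)$ hold. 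Now look at $c\xi+\eta$.
\begin{itemize}
\item If $a<b$ for some $p$ (on the relevant nonstandard scale), then $c\xi+\eta$ has valuation $a+v_p(c)$. Comparing with $a$ modulo every $k$ gives $v_p(c)=0$.
\item Comparing unit parts modulo $p^j$ then gives $c_p\equiv1\pmod{p^j}$ for all $j$.
\end{itemize}
Running this over all $p$ would give $c=\pm1$, and the case $p=3$ (residues modulo $3$) excludes $c=-1$.

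\textbf{Main obstacle.} The hard case is when $v_p(\xi)=v_p(\eta)$ for every standard prime $p$. This can happen, since $\eta/\xi$ may be divisible only by nonstandard primes. In that case the expression $c\xi+\eta=p^{a}(cu+w)$ can have a jump in valuation, especially for $c=-1$, where $\eta-\xi$ gains valuation from $w-u$. To handle this case I would first try to exploit the equation $u\equiv w\equiv q_pw/u$ more fully to pin down $q_p\equiv1$. I would then add a further colouring, by the pair $(v_p\bmod k$, first nonzero digit$)$ applied also to $\eta-\xi$. The aim is to show that $\eta/\xi$ itself has the type of $\xi$ up to the factor $q$, and so cannot be coprime to all standard primes: for example, the colouring ``$p\mid n$ or not'' combined with Step 2 should force $v_p(\eta/\xi)\equiv v_p(\xi)$. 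If this fails, the fallback is the archimedean argument of Step 1 applied to $\eta-\xi$ versus $\eta/\xi$, with $c<0$ treated separately from $c\ge2$.
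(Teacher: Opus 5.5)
Your ``if'' direction and Step~1 are fine. Step~1 is a hands-on proof of the special case of \Cref{pr:ClassiArchimedee} that you need. The first bullet of Step~2 is also fine: a single prime with $v_p(\xi)<v_p(\eta)$ already gives $c\equiv 1\pmod{p^j}$ for all $j$, hence $c=1$. So the detour through all primes, $c=\pm1$ and $p=3$ is unnecessary.

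The genuine gap is the complementary case $v_p(\xi)=v_p(\eta)$. You do not prove it; you only list strategies, and they aim at the wrong thing.
\begin{itemize}
\item Showing that $\eta/\xi$ has a standard prime factor cannot come from the pieces $\xi,\eta,q\eta/\xi$ alone. For $q=1$, let $(\xi,\zeta)$ realise $u\otimes u$, where $u$ is a nonprincipal multiplicative idempotent containing every set $\set{n\mid p\nmid n}$. Then $\xi$, $\xi\zeta$, $\zeta$ are type-equal and $\zeta$ is coprime to all standard primes. So the contradiction has to come from $c\xi+\eta$.
\item The Archimedean fallback cannot work either. Since $c\xi$ is negligible with respect to $\eta$, the Archimedean picture is the same for every $c$, including $c=1$, where the pattern is PR.
\item Your colouring by $v_p\bmod k$ cannot distinguish valuation $0$ from a nonstandard valuation divisible by every $k$. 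This is exactly what leaves $c=-1$ open.
\end{itemize}

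The missing step is short and needs just one prime. Choose $p$ dividing neither $c$ nor the numerator or denominator of $q$. Also colour by whether $p\mid n$ and by the residue of $n$ modulo $p$. As $q\eta/\xi\in\star\mathbb N$ and $v_p(q)=0$, we have $v_p(\eta)\ge v_p(\xi)$; strict inequality is your first bullet.

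If instead $v_p(\xi)=v_p(\eta)$, then $v_p(q\eta/\xi)=0$. Hence $p\nmid \xi$ and $p\nmid\eta$, since they have the colour of $q\eta/\xi$, and they share a residue $r\not\equiv 0$. Then $c\xi+\eta\equiv (c+1)r$ must be $\equiv r$, which gives $p\mid c$. This is a contradiction, and it covers $c=-1$ too.

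The paper does essentially the same with one large prime. \Cref{lemma:smodcases}, applied to last nonzero digits, yields either $c=1$, or $v_p(\xi)=v_p(\eta)$ together with $c=-1$. The latter is excluded because $v_p(\eta-\xi)>0=v_p(q\eta/\xi)$.
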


We will give a nonstandard proof of this in \Cref{sec:NS}, and a standard one in \Cref{sec:appendix}.

\begin{rem}\*
  \begin{enumerate}
  \item As a special case of~\eqref{eq:1}, we see that sums, divisions and projections are Ramsey partition regular, in the sense of \cite{dinassoRamseysWitnesses2025}, in particular answering in the positive \cite[Problem~6.3(3)]{dinassoRamseysWitnesses2025} (which only asked for sums and divisions).
  \item A special case of \Cref{thm:qcd} is that differences, divisions \emph{and projections} are not PR, let alone Ramsey PR.
  \end{enumerate}
\end{rem}
Ramsey partition regularity of differences and products and of differences and divisions (the other points of \cite[Problem~6.3]{dinassoRamseysWitnesses2025}), as well as the long-standing problem of partition regularity of sums, products and projections, remain open.

\section{A quick review of nonstandard methods}\label{sec:NS}

In what follows, we will use some methods and terminology coming from nonstandard analysis that have proven very well-suited to study the partition regularity of Diophantine equations. 
In this section, we collect all the results we need; we refer to \cite{di2015hypernatural,DGL,baglini2012hyperintegers} for extended presentations of this approach.

We work in a nonstandard extension $\star\mathbb{R}$ of $\mathbb {R}$ (which we assume to be sufficiently saturated). In particular, we are interested in the substructure induced on $\star \N\subseteq\star \mathbb  R$. In such a setting, it is possible to introduce the notion of $u$-equivalence.

\begin{defin}\label{1 def sim}
    Let $\alpha, \beta\in\star\N$. We say that $\alpha,\beta$ are $u$-equivalent, and write $\alpha\sim \beta$, if for all $A\subseteq \N$ \[\alpha \in\star A \iff \beta \in \star A.\]
\end{defin}

Given $\alpha\in\star \mathbb N$, the set $\set{A\subseteq \mathbb N\mid \alpha\in \star A}$ is an ultrafilter, and every ultrafilter arises in this way by saturation. From a model-theoretical perspective, being $u$-equivalent amounts precisely to having the same type over $\emptyset$, in the language with, for each $k$, a predicate for every subset $A\subseteq \N^k$.

The main properties of $u$-equivalence that we will use are listed here; the interested reader can find a proof in \cite[Section 11.2]{di2015hypernatural}.

\begin{fact}\label{pr:PropSim}
    Let $\alpha, \beta\in\star\N$ and $f:\N\rightarrow \N$. 
    \begin{enumerate}
                \item \label{point:propsim1}If $\alpha \sim\beta$ then $f(\alpha)\sim f(\beta)$.
        \item If $\alpha \sim f(\alpha)$ then $\alpha =f(\alpha)$; in particular, if  $\alpha\sim\beta$ then $\alpha=\beta$ or $|\alpha-\beta|$ is infinite.
        \item If $\beta\sim f(\alpha)$ then there exists $\gamma\sim \alpha$ such that $\beta=f(\gamma)$.
        \item \label{point:propsim4} If $\alpha\sim\beta$ and $\alpha\in \N$ then $\alpha=\beta$.
    \end{enumerate}
\end{fact}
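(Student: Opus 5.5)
This is a standard fact about $u$-equivalence, so the plan is to read each point off the correspondence between hypernatural numbers and ultrafilters. The guiding principle is that $\alpha\sim\beta$ means $\alpha$ and $\beta$ generate the same ultrafilter $\mathfrak U_\alpha=\set{A\subseteq\mathbb N\mid \alpha\in\star A}$. Throughout we use transfer, which gives $\star(f\inverse(A))=(\star f)\inverse(\star A)$, together with the saturation of $\star\mathbb N$.

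For point~\ref{point:propsim1}, fix $A\subseteq\mathbb N$. Then $f(\alpha)\in\star A$ iff $\alpha\in\star(f\inverse(A))$. By $u$-equivalence this holds iff $\beta\in\star(f\inverse(A))$, i.e.\ iff $f(\beta)\in\star A$.

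For point~\ref{point:propsim4}, apply the definition to $A=\set\alpha$. Since $\star\set\alpha=\set\alpha$, we get $\beta=\alpha$.

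For point~3, consider the family $\set{\star A\cap (\star f)\inverse(\set\beta)\mid A\in\mathfrak U_\alpha}$. It suffices to check that it has the finite intersection property, and then saturation yields the desired $\gamma$. Since $\mathfrak U_\alpha$ is closed under finite intersections, we only need one $A\in\mathfrak U_\alpha$ at a time, and we want $\gamma\in\star A$ with $f(\gamma)=\beta$. From $\alpha\in\star A$ we get $f(\alpha)\in\star(f[A])$, hence $\beta\in\star(f[A])$ because $\beta\sim f(\alpha)$. Transfer of ``every element of $f[A]$ has a preimage in $A$'' then produces such a $\gamma$.

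Point~2 is the main obstacle. It is the classical theorem that an ultrafilter fixed by $f$ must contain the set of fixed points of $f$: if $f(\mathfrak U)=\mathfrak U$ then $\set{n\mid f(n)=n}\in\mathfrak U$. I would prove the combinatorial lemma first. Given $f$, partition $\set{n\mid f(n)\ne n}$ into three sets $A_1,A_2,A_3$ with $f[A_i]\cap A_i=\emptyset$. The construction analyses the orbits of the functional graph of $f$: choose a root in each component, colour by parity of the distance to the root, and use a third colour to handle odd cycles and the root. Now suppose $\alpha\sim f(\alpha)$ but $\alpha\ne f(\alpha)$. By transfer, $\alpha\in\star A_i$ for some $i$. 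Then $f(\alpha)\in\star(f[A_i])$, which is disjoint from $\star A_i$, contradicting $\alpha\sim f(\alpha)$.

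The ``in particular'' clause of point~2 then follows. If $\alpha\sim\beta$ and $\abs{\alpha-\beta}=k$ is finite and nonzero, then $\beta=\alpha\pm k$. Consider $g(n)=n+k$ (or $n\mathbin{\dot-}k$). If $\beta=\alpha+k$, then $\alpha\sim\beta=g(\alpha)$ while $g(\alpha)\ne\alpha$, which contradicts point~2. The case $\beta=\alpha-k$ is symmetric.

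The delicate steps are the three-colouring lemma, specifically the handling of cycles, and the bookkeeping of saturation in point~3. Everything else is routine transfer, and for these steps one could simply cite \cite[Section 11.2]{di2015hypernatural}.
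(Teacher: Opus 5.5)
Your proposal is correct and is the standard argument: transfer for points 1 and 4; saturation applied to the internal family $\set{\star A\cap(\star f)^{-1}(\set\beta)\mid A\in\mathfrak U_\alpha}$ for point 3, where any $\gamma$ in the intersection satisfies $\mathfrak U_\gamma\supseteq\mathfrak U_\alpha$ and hence $\gamma\sim\alpha$; and, for point 2, Kat\v{e}tov's three-set lemma, i.e.\ the fact that an ultrafilter fixed by $f$ contains $\set{n\mid f(n)=n}$. The paper gives no proof of its own and only cites \cite[Section 11.2]{di2015hypernatural}, where these properties are obtained along essentially the same lines.
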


Recall that two elements $\alpha,\beta\in \star \mathbb R$ are in the same \emph{Archimedean class} if $\frac{1}{n}\abs \alpha<\abs \beta<n \abs\alpha$ for some $n\in \N$. This is equivalent to saying that the quotient $\alpha/\beta\in \star \mathbb{R}$ is in the Archimedean class of 1. In this case, we will denote by $\st(\alpha/\beta)$ the unique real such that $\alpha/\beta - \st(\alpha/\beta)$ is infinitesimal. 

In our proofs, we will repeatedly use the following fact, whose proof can be found in \cite[Lemma 5.5.(1)]{dinassoRamseysWitnesses2025}, relating $u$-equivalence and Archimedean classes. See \Cref{lemma:starchclass} for a standard version (and a standard proof) of the same result.

\begin{fact}\label{pr:ClassiArchimedee} 
Let $\alpha\sim\beta$ be infinite and in the same Archimedean class. Then $\st\left({\alpha}/{\beta}\right)=1$.
\end{fact}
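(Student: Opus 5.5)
The plan is to argue by contradiction, using only the definition of $u$-equivalence (\Cref{1 def sim}) and transfer. The idea is to build a partition of $\mathbb N$ into finitely many pieces such that any two elements whose ratio is close to a fixed real $r\neq 1$ lie in different pieces.

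First, since $\alpha,\beta$ are in the same Archimedean class, $\alpha/\beta$ is finite and not infinitesimal. Hence $r\coloneqq\st(\alpha/\beta)$ is a positive real. Suppose $r\ne 1$. Since $\sim$ is symmetric, we may swap $\alpha$ and $\beta$ if necessary and assume $r>1$.

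Next, choose the base $s\coloneqq r^{2/3}>1$, so that $\log_s r = 3/2$. For $j\in\set{0,1,2}$ let
\[
A_j\coloneqq\set{n\in\mathbb N \mid \lfloor \log_s n\rfloor \equiv j \pmod 3}.
\]
The $A_j$ partition $\mathbb N$. By transfer, $\star A_j$ consists of those $\gamma\in\star\mathbb N$ whose hyperinteger $\lfloor \log_s\gamma\rfloor$ is congruent to $j$ modulo $3$. The $\star A_j$ therefore partition $\star\mathbb N$.

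Now $\log_s\alpha-\log_s\beta=\log_s(\alpha/\beta)$. This differs from $\log_s r=3/2$ by an infinitesimal, because $\log_s$ is continuous at $r>0$ and $\alpha/\beta-r$ is infinitesimal. Consequently $\lfloor\log_s\alpha\rfloor-\lfloor\log_s\beta\rfloor\in\set{1,2}$: the difference of floors of two numbers whose difference lies strictly between $1$ and $2$ is $1$ or $2$. In particular this difference is not divisible by $3$, so $\alpha$ and $\beta$ lie in different sets $\star A_j$. But $\alpha\sim\beta$ requires $\alpha\in\star A_j\iff\beta\in\star A_j$ for every $j$, which is a contradiction. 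Hence $r=1$.

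The only delicate point is the choice of the colouring. A two-colouring by parity of $\lfloor\log_s n\rfloor$ could fail, since the floor difference might be either even or odd. Tuning $s$ so that $\log_s r$ lies strictly between two consecutive integers, and colouring modulo $3$, rules out a floor difference divisible by the modulus. Infiniteness of $\alpha,\beta$ is not really needed beyond making the statement nontrivial; finite $u$-equivalent elements are equal by \Cref{pr:PropSim}.\ref{point:propsim4}.
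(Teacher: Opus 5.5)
Your proof is correct, but it takes a different route from the paper. The paper gives no nonstandard proof of this fact; it defers to an external reference. The proof it does give is the standard one in \Cref{lemma:starchclass}, and that proof is direct. For each $\epsilon$ it applies \Cref{lemma:mposv} repeatedly; this lemma is the standard form of ``$u$-equivalent points are never at finite nonzero distance''. It first shows that $x,y$ have the same binary length, then that their first $k$ binary digits agree, and an elementary estimate gives $\abs{x/y-1}<2^{-k}$.

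You instead argue by contradiction. You fix the putative standard part $r\neq 1$ and choose a logarithmic scale tuned to it, so that $\lfloor\log_s\alpha\rfloor-\lfloor\log_s\beta\rfloor\in\set{1,2}$. Your mod-$3$ colouring is an instance of the same ``no finite nonzero distance'' principle, namely \Cref{pr:PropSim} applied to $n\mapsto\lfloor\log_s n\rfloor$. The modulus is chosen to exclude both possible distances at once. The floor-difference bound, the transfer description of the $\star A_j$, and the continuity of $\log_s$ at the standard point $r$ are all fine.

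What your route buys is brevity: a single explicit $3$-colouring and no digit bookkeeping. The paper's route is quantitative, uniform in $\epsilon$ and purely arithmetic. For each $\epsilon$ it exhibits an explicit set in the ultrafilter on which $\abs{x/y-1}<\epsilon$, which is what the appendix needs.

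Your idea also gives a direct proof. For standard $\epsilon>0$, the numbers $\lfloor\log_{1+\epsilon}\alpha\rfloor\sim\lfloor\log_{1+\epsilon}\beta\rfloor$ are at finite distance because $\alpha/\beta$ is bounded. Hence they are equal, so $(1+\epsilon)^{-1}<\alpha/\beta<1+\epsilon$.
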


It was proved in \cite[Theorem 2.2.9]{baglini2012hyperintegers} that the notion of $u$-equivalence allows us to rephrase partition regularity in nonstandard terms. For patterns, it reads as follows.

\begin{fact}\label{thm:NScharPR}
    Given $f_1, \dots, f_k: \N^n \to \N$, the pattern 
    \[
      f_1(x_1,\dots,x_n),\ldots, f_k(x_1,\dots,x_n)
    \]
    is PR if and only if there exist $\alpha_1,\dots ,\alpha_n\in\star\N$ such that \[f_1(\alpha_1,\dots,\alpha_n)\sim \dots\sim  f_k(\alpha_1,\dots,\alpha_n)\notin \mathbb N.\]
\end{fact}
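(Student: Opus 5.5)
This is a standard saturation/transfer argument. I would prove the two implications separately. The only subtlety is how to encode ``infinitely many monochromatic tuples'' in the sense of \Cref{notation}.\ref{point:nontrivialsol}. A set of tuples of natural numbers is infinite exactly when, for every $N_0\in\N$, it contains a tuple with $\max_i f_i > N_0$. So I would build this unboundedness into the sets used.

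For the direction ``PR $\Rightarrow$ existence of $\alpha_1,\dots,\alpha_n$'', I define, for each finite partition $\mathcal P$ of $\N$ and each $N_0\in\N$, the set
\[
X_{\mathcal P,N_0}=\set{\bar a\in\N^n \mid \exists C\in\mathcal P\ \forall i\le k\ f_i(\bar a)\in C,\ \max_{i\le k} f_i(\bar a)>N_0}.
\]
\begin{enumerate}
\item Each $X_{\mathcal P,N_0}$ is nonempty by the PR hypothesis.
\item The family has the finite intersection property. Given finitely many pairs $(\mathcal P_j,N_j)$, let $\mathcal P$ be the common refinement of the $\mathcal P_j$ and $N_0=\max_j N_j$. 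Then $X_{\mathcal P,N_0}\subseteq\bigcap_j X_{\mathcal P_j,N_j}$, and it is nonempty.
\item By saturation there is $\bar\alpha=(\alpha_1,\dots,\alpha_n)$ lying in every $\star X_{\mathcal P,N_0}$.
\item For any $A\subseteq\N$, use the partition $\set{A,\N\setminus A}$ and transfer. Either all $f_i(\bar\alpha)$ lie in $\star A$, or all lie in $\star(\N\setminus A)=\star\N\setminus\star A$. Hence $f_1(\bar\alpha)\sim\dots\sim f_k(\bar\alpha)$.
\item Since $\max_i f_i(\bar\alpha)>N_0$ for every standard $N_0$, some $f_i(\bar\alpha)$ is infinite. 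By \Cref{pr:PropSim}.\ref{point:propsim4}, a standard element is $u$-equivalent only to itself, so none of the $f_i(\bar\alpha)$ can be standard. Therefore the common value class lies outside $\N$.
\end{enumerate}

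For the converse, fix $\bar\alpha$ with $f_1(\bar\alpha)\sim\dots\sim f_k(\bar\alpha)\notin\N$, a finite colouring $\fcol$, and $N_0\in\N$.
\begin{enumerate}
\item Since $\star\N=\star C_1\cup\dots\cup\star C_r$, there is a colour $C_m$ with $f_1(\bar\alpha)\in\star C_m$.
\item By $u$-equivalence, $f_i(\bar\alpha)\in\star C_m$ for every $i$. This choice of $m$ is independent of $N_0$.
\item Since $f_1(\bar\alpha)$ is infinite, $\bar\alpha$ witnesses in $\star\N$ the first-order statement
\[
\exists \bar x\ \Big(\textstyle\bigwedge_i f_i(\bar x)\in C_m\ \wedge\ f_1(\bar x)>N_0\Big).
\]
\item By transfer, the same statement holds in $\N$.
\end{enumerate}
As $N_0$ is arbitrary, we obtain monochromatic tuples in $C_m$ with arbitrarily large first coordinate. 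Hence there are infinitely many distinct monochromatic tuples, which is PR in the sense of \Cref{notation}.

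The only step needing real care is the finite intersection property in the forward direction. The point is that a single colour class must serve all the sets $A$ at once, and that is why I intersect over all finite partitions rather than fixing one colouring. I expect no further obstacles, since the rest is routine transfer together with \Cref{pr:PropSim}.
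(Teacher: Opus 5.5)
Your proof is correct and is the standard saturation argument. The paper does not prove this Fact; it cites Luperi Baglini's thesis. The usual route there goes through ultrafilters. Partition regularity first gives an ultrafilter $u$ every member of which contains a monochromatic configuration. Then saturation realizes $\set{\star A\mid A\in u}$ at all the $f_i(\bar\alpha)$ at once.

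Your family $X_{\mathcal P,N_0}$, indexed by all finite partitions, merges these two steps into one finite-intersection argument. Its $N_0$ bookkeeping also correctly captures the nontriviality clause of \Cref{notation}. One point deserves to be made explicit: the family has cardinality $2^{\aleph_0}$, so you need $\mathfrak c^+$-saturation (or the enlargement property). This is covered by the paper's standing assumption that $\star\mathbb R$ is sufficiently saturated.
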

The requirement that $f_i(\bla \alpha1,n)\notin \mathbb N$ corresponds to the nontriviality requirement in \Cref{notation}.\ref{point:nontrivialsol}.

\begin{rem}\label{rem:overQ}
  In what follows we will be interested in configurations of the form
  \begin{equation}
    \label{eq:31}
    \alpha\sim \beta\sim f_1(\alpha,\beta)\sim f_2(\alpha,\beta)
  \end{equation}
  where $f_1,f_2$ are certain polynomials over $\mathbb Q$, hence it will sometimes be convenient to work in $\star\mathbb Q$ instead of $\star \mathbb N$. 
The analogue of \Cref{thm:NScharPR} still holds (as it does over any set), but the reader should keep in mind that finding $\alpha,\beta\in \star\mathbb Q$ satisfying~\eqref{eq:31} only gives partition regularity of the configuration over $\mathbb Q$. Obtaining partition regularity over $\mathbb N$ requires to also prove that  $\alpha\in\star \mathbb N$ (in which case, all other terms in the pattern also lie in $\star \mathbb N$, since they are equivalent to $\alpha$).
\end{rem}

\Cref{pr:PropSim,pr:ClassiArchimedee,thm:NScharPR} are essentially all the nonstandard analysis that we will use in our proofs. Moreover, we will use a small amount of $p$-adic methods, recalled below.

\begin{rem}\*
    \begin{enumerate}
        \item In what follows, we will frequently fix a ``sufficiently large'' prime $p$. The ``sufficiently large'' is to be understood with respect to the height\footnote{Recall that, if $q=n/m$ with $(n,m)=1$, the \emph{height} of $q$ is $\max\set{\abs n,\abs m}$. In particular, the height of an integer is its absolute value.}  of the data of the problem at hand; in the case of \Cref{thm:qcd}, these would be $c,d,q$.  In particular, if $a,b\in \Z$ are small with respect to $p$, then $a\equiv b \pmod p$ implies $a=b$. This kind of arguments will be used repeatedly in the paper.
        \item Given a positive rational $q=k/h$ and a sufficiently large $p$, when we refer to the class of $q$ modulo $p$ we mean the class of $kh\inverse$, where the multiplicative inverse of $h$ is computed in $\mathbb{F}_p$.
        \item Besides working in the finite field $\mathbb F_p$ in the way just mentioned, some argument will use the ring of $p$-adic integers $\mathbb Z_p$, or its field of fractions $\mathbb Q_p$. In fact, we could work in these structures also in the proofs using $\mathbb F_p$ mentioned above, but we prefer to limit their use to a minimum, for the benefit of the reader unfamiliar with $\mathbb Z_p$. Intuitively, working in $\mathbb F_p$ amounts to considering the last digit of the expansion in base $p$, and working in $\mathbb Z_p$ to considering the last $\omega$ digits of said expansion; that is, if $\alpha=\sum_{i\in \star \mathbb N\cup \set 0} a_i p^i$, the class of $\alpha$ in $\mathbb Z_p$ may be identified with the sequence $(a_i)_{i\in \mathbb N\cup \set 0}$.
        \item We will make frequent use of the $p$-adic valuation $v_p$ and of the function $\smod_p$  sending $n$ to the class of $n/p^{v_p(n)}$ in $\mathbb F_p^\times$, i.e., to its least significant nonzero digit in base $p$. Recall that this map is a multiplicative homomorphism. When $p$ is clear from context, we will simply write $v$ and $\smod$ respectively.
        \item In order not to overburden the notation, we write e.g.\ $h\smod(\alpha)=k\smod (\beta)$ in place of $h\smod(\alpha)\equiv k(\smod \beta) \pmod p$.
    \end{enumerate}
  \end{rem}
The following observations will be crucial, and used throughout the paper, sometimes without mention. 
\begin{rem}\label{rem:smoddet}Let $p$ be a prime.
  \begin{enumerate}
  \item Since $\smod_p$ has finite image, it follows from points~\ref{point:propsim1} and~\ref{point:propsim4} of \Cref{pr:PropSim} that if $\alpha\sim \beta$ then $\smod_p(\alpha)=\smod_p(\beta).$
  \item Similarly, there is natural map $\star\mathbb N\to \mathbb Z_p$, sending $\alpha$ to the sequence of its last $\omega$ digits modulo $p$ (or to the sequence of its remainder classes modulo the standard powers of $p$, depending on the reader's favourite mental picture of $\mathbb Z_p$).

This map factors through the quotient by $\sim$. In other words, if $\alpha,\beta\in \star \mathbb N$ and $\alpha\sim \beta$, then $\alpha,\beta$ have the same class in $\mathbb Z_p$.
  \end{enumerate}
  \end{rem}

  \begin{lemma}\label{lemma:smodcases}
Let  $h\in \mathbb N$, $k,\ell \in \mathbb Z$,   $\eta,\zeta,\xi\in \star \mathbb N$,  $\theta\in \star \mathbb N\cup \set 0$, and $p>h+\abs{k}+\abs{\ell}$  a prime. Assume that $\eta\sim \zeta\sim \xi$, that  $h\eta\sim  \theta+k \zeta+\ell\xi$, and that $v_p(\theta)>v_p(\zeta),v_p(\xi)$.
\begin{enumerate}
\item If $v_p(\zeta)<v_p(\xi)$, then either $k=0$ or $k=h$.
\item If $v_p(\zeta)=v_p(\xi)$ then either  $k+\ell=0$ or $k+\ell=h$.
\end{enumerate}
  \end{lemma}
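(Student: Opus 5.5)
We compare the $p$-adic valuations and the $\smod_p$ values of both sides of $h\eta\sim\theta+k\zeta+\ell\xi$, using \Cref{rem:smoddet}. Since $\eta\sim\zeta\sim\xi$, all three have the same $\smod_p$, say $s\in\mathbb F_p^\times$. Since $\smod_p$ is multiplicative and $p>h$, we get $\smod_p(h\eta)=hs$. The key observation is that $u$-equivalence also preserves valuations in the following weak sense: the map $\alpha\mapsto \min(v_p(\alpha),N)$ has finite image for each standard $N$. More useful, though, is to work with classes in $\mathbb Z_p$ as in \Cref{rem:smoddet}, or simply with $\smod_p$ together with a comparison of valuations. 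The valuations may be infinite, so we cannot match $v_p(\eta)$ with $v_p(\zeta)$ directly. However, $\eta\sim\zeta$ implies that $v_p(\eta)$ and $v_p(\zeta)$ are $u$-equivalent, via \Cref{pr:PropSim}, point~\ref{point:propsim1}, applied to $v_p$. The two sides $h\eta$ and $\theta+k\zeta+\ell\xi$ are $u$-equivalent, hence they have the same $\smod_p$. It is therefore enough to compute $\smod_p$ of the right-hand side in each case.

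Case 1: $v_p(\zeta)<v_p(\xi)$. If $k\neq0$, then since $p\nmid k$ the term $k\zeta$ has valuation $v_p(\zeta)$, strictly less than the valuations of $\ell\xi$ and $\theta$. Hence the right-hand side has valuation $v_p(\zeta)$ and $\smod_p$ equal to $ks$. Comparing with the left-hand side gives $hs=ks$ in $\mathbb F_p$, so $h\equiv k\pmod p$. Since $p>h+\abs k$, this forces $k=h$.

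Case 2: $v_p(\zeta)=v_p(\xi)=:v$. Write $\zeta=p^v\zeta'$ and $\xi=p^v\xi'$ with $\zeta'\equiv\xi'\equiv s$. Then $k\zeta+\ell\xi=p^v(k\zeta'+\ell\xi')$, and $k\zeta'+\ell\xi'\equiv(k+\ell)s\pmod p$. If $k+\ell\neq0$, then, since $\abs{k+\ell}<p$, we have $p\nmid k+\ell$. So the sum has valuation exactly $v$, and adding $\theta$ (of larger valuation) does not change its $\smod_p$, which is $(k+\ell)s$. Equating with $hs$ gives $h\equiv k+\ell$, and the bound $p>h+\abs k+\abs\ell$ yields $k+\ell=h$.

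In both cases the same argument works when the right-hand side is $0$ or negative, which we must also handle. The hypothesis $h\eta\sim$ right-hand side with $h\eta\in\star\mathbb N$ forces the right-hand side to lie in $\star\mathbb N$. In the nondegenerate subcases computed above, the right-hand side is nonzero, so $\smod_p$ is defined. The only real subtlety is making sure $\smod_p$ comparison is legitimate for hypernatural arguments, which is exactly \Cref{rem:smoddet}. I do not expect a serious obstacle; the main care point is the carry issue in Case 2, handled by $p\nmid k+\ell$.
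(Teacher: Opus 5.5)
Your proof is correct and follows essentially the same route as the paper: apply $\smod_p$ to both sides, note that $\eta,\zeta,\xi$ share a common value $s$, compute $\smod_p$ of the right-hand side as $ks$ (resp.\ $(k+\ell)s$) when $k\neq 0$ (resp.\ $k+\ell\neq 0$), and use $p>h+\abs{k}+\abs{\ell}$ to turn $h\equiv k$ (resp.\ $h\equiv k+\ell$) into an equality. The preliminary remarks in your first paragraph about $v_p(\eta)\sim v_p(\zeta)$ and truncated valuations are never used and can be dropped.
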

  \begin{proof}
Apply the map $\smod=\smod_p$ to $h\eta$ and $\theta+k \zeta+\ell\xi$, and observe that $\smod(\eta)=\smod(\zeta)=\smod(\xi)$, call it $s$.

    If $v_p(\zeta)<v_p(\xi)$, then either $k=0$ or $v(\theta+\ell \xi)>v(k\zeta)$, from which it follows that the least significant nonzero digit in base $p$ of $\theta+k \zeta+\ell\xi$ equals that of $k \zeta$, that is, $\smod(\theta+k \zeta+\ell\xi)=\smod(k\zeta)=ks$. This implies $\smod(h\eta)=hs=ks$, and as $p$ is large enough we obtain $h=k$.

    If $v_p(\zeta)=v_p(\xi)$ then either $k+\ell=0$ or   $\smod(\theta+k \zeta+\ell\xi)=\smod(k\zeta+\ell\xi)=(k+\ell)s$, and we conclude similarly as above.
   \end{proof}

   As a first example of application of the methods, we prove below \Cref{thm:qcd}.

\begin{proof}[Proof of \Cref{thm:qcd}]\label{proof:qcd}
Right to left is a weakening of \Cref{co:fsratiosrescaled}, so we focus on left to right.

  By \Cref{thm:NScharPR} there exist $\alpha,\beta\in \star \N\setminus \mathbb N$ such that $\alpha\sim \beta\sim c\alpha + d \beta\sim q \beta/\alpha$. In particular $q\beta/\alpha$ is a positive infinite integer, so the whole Archimedean class of $q\beta$, hence of $\beta$, must be larger than that of $\alpha$. Therefore, the Archimedean class of $c\alpha+d\beta$ equals that of $\beta$, and it follows from \Cref{pr:ClassiArchimedee} and $\beta\sim c\alpha+d\beta$ that $d=1$.

Work modulo a sufficiently large prime $p$ and recall \Cref{rem:smoddet}. Since $q\beta/\alpha\in \star \mathbb Z$ and $p$ is large, and in particular $v(q)=0$, we have  $v(\beta)\ge v(\alpha)$. By \Cref{lemma:smodcases} (applied with $\theta=0$) and the assumption $c\ne 0$ we see that either $c=1$, and we are done, or $v(\beta)=v(\alpha)$ and $c=-1$. In this last case, because $v(\alpha)=v(\beta)$ and $\smod(\alpha)=\smod(\beta)$, we obtain the contradiction
\[
0= v(q)+v(\beta)-v(\alpha) =v(q\beta/\alpha) \sim v(\beta+c\alpha)=v(\beta-\alpha)>0.\qedhere
\]
\end{proof}

As \Cref{thm:NScharPR,pr:PropSim} will be used very often, we will no longer mention them explicitly.

\section{Products of two linear polynomials}\label{sec:prodshifts}

  As a special case of~\eqref{eq:2}, for $\abs F\le 2$ we recover  the main result from \cite{goswamiMonochromaticTranslatedProduct2024} (which answers \cite[Question~31]{sahasrabudheExponentialPatternsArithmetic2018}), namely, that the pattern $x,y,xy,x+xy$ is PR. In fact, the case $\abs F\le 2$ of~\eqref{eq:2} provides a Ramsey version of this.

With $\abs F=3$ we obtain (Ramsey) partition regularity of the pattern
\[
x,\quad y,\quad z,\quad xy,\quad yz,\quad xyz,\quad x+xy,\quad  x+xyz,\quad  y+yz,\quad  xy+xyz,\quad   x+xy+xyz.
\]

As $x,y,xy, x(y+1)$ is PR, one may wonder whether other shifts of factors in this kind of configuration yield PR patterns. More precisely, we would like to determine for which $q\in \mathbb Q_{>0}$, $a,b\in \mathbb N$ and $n,m\in \mathbb Z$ the equation $z=q(ax+n)(by+m)$ is PR. The case $n=m=0$ follows from \Cref{co:fsratiosrescaled} and a change of variables, but can also be settled directly by the following observation.
  \begin{rem}\label{rem:qxy}
  The pattern $x,y, q xy$ is PR. To prove it, it suffices to take any multiplicative idempotent $u$ containing every $n \mathbb N$ and consider the ultrafilter $q\inverse\cdot u$.
\end{rem}
When at least one of $n,m$ is nonzero, the situation is more involved. Let us write $q=k/h$ and rephrase the problem, asking for which  $h,k,a,b\in \mathbb N$ and $n,m\in \mathbb Z$ the configuration
  \[
    hx,\quad  hy,\quad  k( a x+n)(by+m)
  \]
is PR, equivalently, there are $\alpha,\beta\in \star \mathbb N\setminus \mathbb N$ with
  \[
    h\alpha\sim h \beta\sim k( a\alpha+n)(b\beta+m).
  \]

We first deal with the case where exactly one of $n,m$ is nonzero, say $m$. By simple algebraic manipulations we may reduce to the case of patterns $hx,hy,  a x(by+m)$ where $(h,a)=1=(b,m)$.
  
  \begin{pr}\label{pr:lhdn}
Assume that $(h,a)=1=(b,m)$. If the pattern $hx,hy,  a x(by+m)$ is PR then  $a=1$.
    \end{pr}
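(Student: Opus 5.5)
The plan is to work one prime at a time. Suppose $a\neq 1$ and fix a prime $\ell\mid a$; since $(h,a)=1$ we have $\ell\nmid h$. By \Cref{thm:NScharPR} there are $\alpha,\beta\in\star\mathbb N\setminus\mathbb N$ with
\[
h\alpha\sim h\beta\sim a\alpha(b\beta+m).
\]
I will derive a contradiction by looking at $\ell$-adic valuations. The point of valuations, rather than a single $\smod$ digit, is that the $\ell$-adic valuation of the third term exceeds that of $h\alpha$ by at least $v_\ell(a)\ge 1$.

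First, $v_\ell$ and every reduction $x\mapsto v_\ell(x)\bmod k$, for standard $k$, are functions on $\mathbb N$. So by \Cref{pr:PropSim}, points~\ref{point:propsim1} and~\ref{point:propsim4}, $u$-equivalent hypernaturals have $u$-equivalent valuations and the same valuations modulo every standard $k$. Set $\gamma\coloneqq v_\ell(\alpha)=v_\ell(h\alpha)$ and $\gamma'\coloneqq v_\ell(\beta)=v_\ell(h\beta)$, using $\ell\nmid h$. Set also $c\coloneqq v_\ell(a)\ge1$ and $\delta\coloneqq v_\ell(b\beta+m)$, which is well defined because $b\beta+m$ is a positive hyperinteger (the third term is positive). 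We then get
\[
\gamma\sim\gamma'\qquad\text{and}\qquad \gamma\sim\gamma+c+\delta,
\]
and in particular $\gamma\equiv\gamma+c+\delta\pmod k$ for every standard $k$.

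Next I split into cases to show that $\delta$ is a standard natural number. If $\ell\mid b$, then $\ell\nmid m$ because $(b,m)=1$, hence $\delta=0$. If $\ell\nmid b$, I distinguish according to $\gamma'$. When $\gamma'$ is finite, $\gamma$ is finite as well, since $\gamma\sim\gamma'$ and \Cref{pr:PropSim}.\ref{point:propsim4} applies. Then $\gamma\sim\gamma+c+\delta$ forces $\gamma=\gamma+c+\delta$, which is impossible as $c\ge1$ (should $\delta$ be infinite here, the two sides differ by an infinite amount, which is equally absurd). When $\gamma'$ is infinite, $\ell^N\mid\beta$ for every standard $N$, so $v_\ell(b\beta+m)=v_\ell(m)$. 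This uses $m\ne0$, which is the standing assumption of this case.

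So in all remaining cases $c+\delta$ is a standard positive integer. Taking $k>c+\delta$ in the congruence $\gamma\equiv\gamma+c+\delta\pmod k$ yields $c+\delta\equiv0\pmod k$, a contradiction. Therefore no prime divides $a$, that is, $a=1$.

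The main obstacle I expect is that $\gamma$ may be infinite. In that case $u$-equivalence does not directly give equality of valuations, which is why I pass to valuations modulo standard $k$ and need to control $\delta$ through the case analysis above.
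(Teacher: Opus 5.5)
Your argument is correct and is essentially the paper's proof. Both take $v_\ell$ for a prime $\ell\mid a$, get an immediate contradiction when $v_\ell(\alpha)$ is finite, and in the infinite case use $v_\ell(b\beta+m)=v_\ell(m)$ to make $v_\ell(\alpha)$ $u$-equivalent to its translate by a standard positive integer. Two small differences: your congruence modulo a standard $k>c+\delta$ is just an inline proof that $u$-equivalent points cannot be at finite nonzero distance, and your separate case $\ell\mid b$ (hence the use of $(b,m)=1$) is unnecessary, since the infinite-valuation case already gives $\delta=v_\ell(m)$ whatever $b$ is.
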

    \begin{proof}    
 If not, let $p$ be a prime dividing $ a$, and let $v$ be the $p$-adic valuation.  As $(h, a)=1$, we have $v(h)=0$. Therefore, if $\alpha,\beta$ are witnesses of partition regularity,
\[
 v(\alpha)= v(h\alpha)\sim v( a \alpha( b\beta+ m))=v( a)+v(\alpha)+v( b\beta+ m).
\]
If $v(\alpha)\in \mathbb N$, then the $\sim$ above is an equality, and as $v( a)>0$ it follows that $v(\alpha)>v(\alpha)$, a contradiction. If instead $v(\alpha)> \mathbb N$ then $v(\beta)>\mathbb N$ as well, hence $v(b\beta+m)=v(m)$. Therefore, $v(\alpha)\sim v(a)+v(\alpha)+v(m)$, hence the function $x\mapsto x+v(a)+ v(m)$ sends the infinite number $v(\alpha)$ to an equivalent one, contradiction.
\end{proof}

We now proceed to deal with the case $n,m\ne 0$ and prove \Cref{athm:cosq}. In its proof, and in several arguments further down in the paper, we will use the following easy observations.

\begin{lemma}\label{lemma:fpotmt}
  Let $p$ a prime, $u,v\in \mathbb Z$ and $\alpha\in \star \mathbb N$.
  \begin{enumerate}
  \item If  the class $r$ of $\alpha$ in $\mathbb F_p$ satisfies $ur=v$ and  $\gamma\coloneqq u\alpha-v$, then $v_p(\gamma)> 0$.
  \item If the class $\rho$ of $\alpha$ in $\mathbb Z_p$ satisfies $u\rho=v$ and $\gamma\coloneqq u\alpha-v$, then $v_p(\gamma)> \mathbb N$.
  \end{enumerate}
\end{lemma}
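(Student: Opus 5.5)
Both points come down to the fact that $\gamma = u\alpha - v$ is divisible by $p$, respectively by $p^j$ for every standard $j$. This can be checked by transfer of the corresponding standard statements about integers. One remark on the statement first: for $v_p(\gamma)$ to make sense, $\gamma$ must be nonzero. If $\gamma=0$ we adopt the usual convention $v_p(0)=\infty$, which exceeds every element of $\star\mathbb N$, so both conclusions hold trivially. We therefore assume $\gamma\neq 0$. Then $\gamma\in\star\mathbb Z\setminus\set0$, and $v_p(\gamma)\in \star\mathbb N\cup\set 0$ is given by the transferred valuation.

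For the first point, the class $r$ of $\alpha$ in $\mathbb F_p$ is the remainder of $\alpha$ modulo $p$. By transfer of division with remainder, $\alpha = p\delta + r_0$ for some $\delta\in\star\mathbb Z$ and a standard $r_0\in\set{0,\dots,p-1}$ representing $r$. Then $\gamma = u\alpha - v = up\delta + (ur_0 - v)$. The hypothesis $ur=v$ in $\mathbb F_p$ means $p\mid ur_0-v$ in $\mathbb Z$. Hence $p\mid \gamma$ in $\star\mathbb Z$, so $v_p(\gamma)\ge 1>0$.

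For the second point, we run the same argument at each standard level. Saying that the class $\rho$ of $\alpha$ in $\mathbb Z_p$ satisfies $u\rho=v$ means that, for every standard $j\in\mathbb N$, the residue $r_j$ of $\alpha$ modulo $p^j$ satisfies $ur_j\equiv v \pmod{p^j}$. This follows by reducing the equation $u\rho=v$ in $\mathbb Z_p$ modulo $p^j$, using that $\mathbb Z_p/p^j\mathbb Z_p\cong \mathbb Z/p^j\mathbb Z$ and that $\rho$ reduces to $r_j$ by the definition of the map $\star\mathbb N\to\mathbb Z_p$ in \Cref{rem:smoddet}. Writing $\alpha = p^j\delta_j + r_j$ as before gives $p^j\mid\gamma$, so $v_p(\gamma)\ge j$. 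Since $j$ was an arbitrary standard natural number, $v_p(\gamma)>\mathbb N$.

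There is no real obstacle here. The only points needing care are the convention for $\gamma=0$ and making explicit that the class of $\alpha$ in $\mathbb Z_p$ is compatible with its residues modulo each standard $p^j$. Both are immediate from the description of that map.
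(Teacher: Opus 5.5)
Your proof is correct and is essentially the paper's argument. The paper observes that the class of $\gamma$ in $\mathbb F_p$ (resp.\ $\mathbb Z_p$) is $ur-v=0$ (resp.\ $u\rho-v=0$), which means $\gamma$ is divisible by $p$ (resp.\ by every standard power of $p$); your transfer of division with remainder spells this out, and your remark on the convention $v_p(0)=\infty$ is a harmless clarification the paper leaves implicit.
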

\begin{proof}
By construction, the class of $\gamma$ in $\mathbb F_p$ [resp., $\mathbb Z_p$] is $0$. This means precisely that $\gamma$ is divisible by $p$ [resp., every standard power of $p$]. 
\end{proof}
\begin{rem}\label{rem:cofpmt}
Let $u,v\in \mathbb Z$, $f\in \mathbb Q[x,y]$, and $\alpha,\beta\in\star \mathbb N$ satisfy $\alpha\sim \beta\sim f(\alpha,\beta)$. Then $\gamma\coloneqq u\alpha-v$ and $\delta\coloneqq u\beta-v$ satisfy $\gamma\sim \delta\sim g(\gamma,\delta)$ for a suitable $g\in \mathbb Q[x,y]$. Suppose that $p$ is a prime sufficiently large with respect to $u,v$ and the coefficients of $g$. If the class $r$ of $\alpha,\beta$ in  $\mathbb F_p$ satisfies $ur=v$, it follows from \Cref{lemma:fpotmt} that the constant term of $g$ is null. Hence, we do not need to calculate explicitly the constant term of $g$ when applying this kind of transformation.
\end{rem}
\begin{thm}[\Cref{athm:cosq}]\label{thm:squares}  Let $q\in \mathbb Q_{>0}$, $a,b\in \mathbb N$ and $n,m\in \mathbb Z\setminus \set 0$.  Assume partition regularity of the pattern
  \[
    x,\quad  y,\quad  q( a x+n )(by+m).
  \]
  Then there is $t\in \mathbb Z$ such that
  \begin{enumerate}[label=(\alph*)]
  \item\label{case:conssqr} $q a m=t^2$ and $qbn=(t+1)^2$, or
  \item\label{case:consprod} $q a m=t(t-1)$ and $qbn=t(t+1)$.
  \end{enumerate}
\end{thm}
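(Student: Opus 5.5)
We would first normalise the configuration. Put $C\coloneqq qab$, $A\coloneqq qam$ and $B\coloneqq qbn$, so that $A,B\ne 0$. Then $C\alpha+B=qb(a\alpha+n)$ and $C\beta+A=qa(b\beta+m)$, hence $(C\alpha+B)(C\beta+A)=C\cdot q(a\alpha+n)(b\beta+m)$. So if $\alpha\sim\beta\sim q(a\alpha+n)(b\beta+m)$ witness partition regularity, then $\gamma\coloneqq C\alpha$ and $\delta\coloneqq C\beta$ satisfy
\[
\gamma\sim\delta\sim(\gamma+B)(\delta+A).
\]
These live in $\star\mathbb Q$, but for $p$ large we have $v_p(C)=0$, so every $p$-adic computation can be carried out exactly as in $\star\mathbb N$. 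The target is to show that $A=t^2,\ B=(t+1)^2$ or $A=t(t-1),\ B=t(t+1)$.

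Fix a sufficiently large prime $p$ and let $s\in\mathbb F_p$ be the common class of $\gamma,\delta$ and $(\gamma+B)(\delta+A)$ (this is where \Cref{rem:smoddet} is used). Then $s=(s+A)(s+B)$ in $\mathbb F_p$. Choose an integer representative $\tilde s$ of $s$, and set $\gamma_1\coloneqq\gamma-\tilde s$, $\delta_1\coloneqq\delta-\tilde s$. By \Cref{lemma:fpotmt} both have positive $p$-adic valuation, and $\gamma_1\sim\delta_1$. Writing $A'\coloneqq s+A$ and $B'\coloneqq s+B$, \Cref{rem:cofpmt} says the constant term vanishes modulo $p$, giving
\[
\gamma_1\sim\delta_1\sim\gamma_1\delta_1+A'\gamma_1+B'\delta_1+\theta_0,
\]
where $\theta_0$ is divisible by $p$ and $A'B'=s$ in $\mathbb F_p$. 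Here $\gamma_1\delta_1$ has strictly larger valuation than $\gamma_1$ and $\delta_1$. We then run the argument of \Cref{lemma:smodcases} in $\mathbb F_p$, with $h=1$ and coefficients $A',B'\in\mathbb F_p$, using $\smod(\gamma_1)=\smod(\delta_1)$. Some care is needed to absorb $\theta_0$ into the higher-valuation term; if that is awkward, we would pass to $\mathbb Z_p$ and use the second point of \Cref{lemma:fpotmt} instead.

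The case analysis on valuations then goes as follows. If $v(\gamma_1)<v(\delta_1)$, then $A'\in\{0,1\}$. If $A'=0$, then $s=A'B'=0$, so $A=A'-s=0$, a contradiction. If $A'=1$, then $B'=s$ and hence $B=0$, again a contradiction. The case $v(\gamma_1)>v(\delta_1)$ is symmetric. So $v(\gamma_1)=v(\delta_1)$ and $A'+B'\in\{0,1\}$, with $A'B'=s=A'-A=B'-B$. Put $u\coloneqq A'$.
\begin{itemize}
\item If $A'+B'=1$, then $A=u-u(1-u)=u^2$ and $B=(1-u)^2$ in $\mathbb F_p$. Eliminating $u$ gives $u=(A-B+1)/2$ and $A\equiv\bigl((A-B+1)/2\bigr)^2\pmod p$.
\item If $A'+B'=0$, then $A=u(u+1)$ and $B=u(u-1)$, so $u=(A-B)/2$ and $A\equiv\frac{A-B}{2}\bigl(\frac{A-B}{2}+1\bigr)\pmod p$.
\end{itemize}
For each large $p$ one of these two rational identities holds modulo $p$, so one of them holds for infinitely many $p$ and is therefore a true identity in $\mathbb Q$. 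With $t\coloneqq -u$ this yields case~\ref{case:conssqr} or case~\ref{case:consprod} respectively, except that at this point we only know $t\in\mathbb Q$.

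The step we expect to be the main obstacle is showing $t\in\mathbb Z$. If $t$ has a denominator, choose a prime $\ell$ dividing it, so that $v_\ell(A)$ and $v_\ell(B)$ are negative and determined by $t$. We would then argue with $v_\ell$ directly on $\alpha\sim\beta\sim q(a\alpha+n)(b\beta+m)$ with $\alpha,\beta\in\star\mathbb N$, along the lines of \Cref{pr:lhdn}. Split according to whether $v_\ell(\alpha)$ is standard or infinite. In the infinite case, $v_\ell(a\alpha+n)=v_\ell(n)$, and the relation $v_\ell(\alpha)\sim v_\ell(q)+v_\ell(n)+v_\ell(b\beta+m)$ forces a specific balance of the standard quantities $v_\ell(q),v_\ell(a),v_\ell(m),\dots$. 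The hope is that this balance is incompatible with the negative $\ell$-adic valuations of $qam$ and $qbn$ coming from $t\notin\mathbb Z$. If this valuation bookkeeping fails, the fallback is to repeat the $\mathbb F_p$ argument above for $\ell$ itself, with $\smod_\ell$ and the rescaling $\gamma=C\alpha$ replaced by a rescaling that clears the $\ell$-part of $C$.
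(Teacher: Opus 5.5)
\textbf{The gap.} The step where you hope to ``absorb $\theta_0$'' cannot be carried out, and the fallback to $\mathbb Z_p$ assumes exactly the ingredient your proposal never proves. Let $f(X)\coloneqq (X+A)(X+B)-X$, let $\rho\in\mathbb Z_p$ be the common class of $\gamma,\delta$ (so $f(\rho)=0$), and set $\Delta\coloneqq (A+B-1)^2-4AB$.

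Suppose $\Delta\ne 0$. Then for large $p$ we have $f=(X-\rho)(X-\rho')$ in $\mathbb Z_p[X]$ with $\rho\not\equiv\rho'\pmod p$. Your constant term is $\theta_0=f(\tilde s)$, and $\tilde s\equiv\rho\not\equiv\rho'$. Hence $v_p(\theta_0)=v_p(\tilde s-\rho)=v_p(\gamma_1)=v_p(\delta_1)$ unless $\tilde s=\rho$ exactly.

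So $\theta_0$ never has larger valuation than $\gamma_1$. Comparing lowest digits then only yields $\sigma(1-A'-B')\equiv\theta_0/p^{v_p(\gamma_1)}$. This holds automatically (it is $f'(\rho)=\rho-\rho'$ in disguise) and gives no information on $A'+B'$.

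The second point of \Cref{lemma:fpotmt} does not rescue this. It needs $u\rho=v$ with $u,v\in\mathbb Z$, i.e.\ $\rho$ rational; only then can you subtract $\rho$ inside $\star\mathbb Q$ and get an identically vanishing constant term. This is not a side issue: case~(b) of the statement has $\Delta=1$ (only case~(a) has $\Delta=0$), so the $\Delta\ne0$ situation is where the whole argument has to happen.

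\textbf{The missing idea and the repair.} The paper's proof rests on an arithmetic step you do not have. Since $s$ exists for every large $p$, $\Delta$ is a square in $\mathbb F_p$ for all large $p$. Clearing a square denominator and applying \Cref{fact:gw}, $\Delta$ is a square in $\mathbb Q$. So both roots of $f$ are rationals independent of $p$, and you should shift by the root $r_0$ with $r_0\equiv s\pmod p$. Then:
\begin{itemize}
\item the constant term vanishes identically (\Cref{rem:cofpmt});
\item $v_p(\gamma-r_0),v_p(\delta-r_0)>0$ by \Cref{lemma:fpotmt};
\item \Cref{lemma:smodcases} applies with $\theta=(\gamma-r_0)(\delta-r_0)$.
\end{itemize}
With this repair your case analysis is correct. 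In your normalisation it is a tidier version of the paper's argument, which instead writes $\sqrt\Delta$ via an auxiliary integer $\ell$ and parametrises $(am,bn)$ linearly in $t$. Since $A'=r_0+A$ and $B'=r_0+B$ become fixed rationals, you get $A'+B'\in\set{0,1}$ exactly, without the ``infinitely many $p$'' step.

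\textbf{Integrality of $t$.} Your last paragraph is a hope, not an argument. The valuation bookkeeping you describe gives no contradiction for $x,y,\frac14(x+3)(y-1)$. There $qam=-\frac14$ and $qbn=\frac34$ force $t=\frac12$, yet $\alpha$ is odd and the $2$-adic valuations of the pieces balance ($-2+1+1=0$). The paper's proof contains no such small-prime step either: it uses only large primes, and its $t$ is the rational parameter of a line. So, like your repaired argument, what it actually establishes is $t\in\mathbb Q$; getting $t\in\mathbb Z$ would need genuinely new input at small primes.
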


\begin{rem}\label{rem:thmbsym}
  Note that, by replacing $t$ with $-t-1$ or $-t$ respectively, one sees that both configurations in the conclusion are symmetrical in $x,y$.
\end{rem}

In the proof of \Cref{thm:squares}, we will use the following standard arithmetic fact, of which we provide a proof for the reader's convenience.
\begin{fact}\label{fact:gw}
  An integer $\Delta\in \mathbb Z$ is a square in $\mathbb Z$ if and only if, for cofinitely many $p$, its residue is a square in $\mathbb F_p$.
\end{fact}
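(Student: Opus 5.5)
The forward direction is immediate. If $\Delta=s^2$ with $s\in\mathbb Z$, then for every prime $p$ the residue of $\Delta$ is the square of the residue of $s$. So the work is in the converse, which I would prove by contraposition. Assume $\Delta$ is not a square in $\mathbb Z$; in particular $\Delta\neq 0$. The goal is to produce infinitely many primes $p\nmid 2\Delta$ for which the Legendre symbol satisfies $\bigl(\frac{\Delta}{p}\bigr)=-1$.

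First I would normalise $\Delta$. Write $\Delta=\epsilon\, 2^{e} q_1\cdots q_k\, s^2$, where $\epsilon\in\set{\pm1}$, $e\in\set{0,1}$, $s\in\mathbb N$, and the $q_i$ are distinct odd primes. For every $p\nmid 2\Delta$ the symbol $\bigl(\frac{s^2}{p}\bigr)$ equals $1$, so by multiplicativity
\[
\Bigl(\frac{\Delta}{p}\Bigr)=\Bigl(\frac{\epsilon}{p}\Bigr)\Bigl(\frac{2}{p}\Bigr)^{e}\prod_{i\le k}\Bigl(\frac{q_i}{p}\Bigr).
\]
Since $\Delta$ is not a square, $(\epsilon,e,k)\neq(1,0,0)$.

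The main case is $k\geq 1$. Fix a quadratic non-residue $r$ modulo $q_1$. By the Chinese Remainder Theorem there is a single residue class modulo $8q_1\cdots q_k$ whose elements $p$ satisfy
\[
p\equiv 1 \pmod 8,\qquad p\equiv r \pmod{q_1},\qquad p\equiv 1\pmod{q_i}\ \text{for } i\ge 2.
\]
This class is coprime to the modulus, so Dirichlet's theorem on primes in arithmetic progressions gives infinitely many primes $p$ in it. For such $p$:
\begin{itemize}
\item $p\equiv 1\pmod 8$ gives $\bigl(\frac{-1}{p}\bigr)=\bigl(\frac{2}{p}\bigr)=1$ by the supplementary laws;
\item $p\equiv1\pmod 4$ lets quadratic reciprocity give $\bigl(\frac{q_i}{p}\bigr)=\bigl(\frac{p}{q_i}\bigr)$;
\item the congruence conditions give $\bigl(\frac{p}{q_1}\bigr)=-1$ and $\bigl(\frac{p}{q_i}\bigr)=1$ for $i\geq 2$.
\end{itemize}
Hence $\bigl(\frac{\Delta}{p}\bigr)=-1$. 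The finitely many such $p$ dividing $\Delta$ can be discarded, leaving infinitely many primes modulo which $\Delta$ is a non-square.

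The remaining cases are $k=0$, where $\Delta\in\set{-s^2,\,2s^2,\,-2s^2}$. Here I would again use Dirichlet's theorem together with the supplementary laws:
\begin{itemize}
\item for $-s^2$, take primes $p\equiv 3\pmod 4$;
\item for $2s^2$, take primes $p\equiv 3\pmod 8$, for which $\bigl(\frac2p\bigr)=-1$;
\item for $-2s^2$, take primes $p\equiv 5\pmod 8$, for which $\bigl(\frac{-2}{p}\bigr)=\bigl(\frac{-1}{p}\bigr)\bigl(\frac{2}{p}\bigr)=1\cdot(-1)=-1$.
\end{itemize}

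The only nonelementary input is Dirichlet's theorem, which I would quote rather than prove. The one step needing care is the bookkeeping in the main case: the congruence $p\equiv 1\pmod 8$ is what neutralises the factors $\epsilon$ and $2^e$ and removes the sign in reciprocity, so that exactly one factor of the product equals $-1$. As an alternative, the converse also follows from the Chebotarev density theorem applied to the irreducible polynomial $x^2-\Delta$, but the explicit argument above is more self-contained.
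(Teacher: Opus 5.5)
Your proof is correct, but it follows a different route from the paper.

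The paper proves the converse directly rather than by contraposition. For $p\nmid 2\Delta$, a square root of $\Delta$ in $\mathbb F_p$ is nonzero, so Hensel's Lemma lifts it, and $\Delta$ is a square in $\mathbb Z_p$, hence in $\mathbb Q_p$, for cofinitely many $p$. The Grunwald--Wang Theorem then makes $\Delta$ a square in $\mathbb Q$, and Gauss' Lemma makes it a square in $\mathbb Z$.

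You avoid $p$-adic numbers and Hensel's Lemma entirely. After passing to the squarefree part, you use the Chinese Remainder Theorem, the supplementary laws and quadratic reciprocity to exhibit a residue class (modulo $8q_1\cdots q_k$, or modulo $4$ or $8$ when $k=0$) on which the Legendre symbol of $\Delta$ is $-1$. Dirichlet's theorem then supplies infinitely many primes in that class.

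The paper's proof is shorter only because it hands all of the arithmetic to Grunwald--Wang, a class-field-theoretic result. The special case it actually needs (squares over $\mathbb Q$, at cofinitely many places) is essentially the statement being proved. Your argument is the classical proof of that special case, so it is more self-contained.

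You can also remove the analytic input. Your CRT construction works verbatim with the Jacobi symbol in place of the Legendre symbol, using the Jacobi versions of reciprocity and the supplementary laws. It gives a positive integer $n$, coprime to any prescribed finite set of primes, with Jacobi symbol $(\Delta/n)=-1$. Some prime factor of $n$ is then a prime outside that set modulo which $\Delta$ is a non-residue. So Dirichlet's theorem is not actually needed.
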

\begin{proof}
  If $\Delta=0$ this is true, so assume this is not the case.
  
  By Gauss' Lemma, $\Delta$ is a square in $\mathbb Z$ if and only if it is a square in $\mathbb Q$. By the Grunwald--Wang Theorem, this holds if and only $\Delta$  is a square in $\mathbb Q_p$ for cofinitely many $p$. Equivalently, if and only if it is a square in $\mathbb Z_p$ for cofinitely many $p$ (since $v(x^2)=2v(x)$ and $\mathbb Z_p=\set{x\in \mathbb Q_p \mid v(x)\ge 0}$).

  By assumption, for cofinitely many $p$, the residue of $\Delta$ in $\mathbb F_p$ is a square, say $z^2$. The derivative of the polynomial $x^2-\Delta$ is $2x$. As $p$ is large enough, $p\nmid \Delta$, so $z\ne 0$, hence $2z \ne 0$. By  Hensel's Lemma, $\Delta$ is a square in $\mathbb Z_p$.
\end{proof}

\begin{proof}[Proof of \Cref{thm:squares}]
  Write $q=k/h$, for suitable positive integers $k,h$.
By replacing $a$ and $n$ by $ka$ and $kn$ respectively, we may assume that $k=1$.
Assume that $\alpha,\beta\in \star \mathbb N$ are infinite such that 
\begin{equation*}
  h\alpha\sim h \beta\sim ( a\alpha+n)(b\beta+m).
\end{equation*}

Fix a sufficiently large $p$. We have $v(\alpha)=v(h\alpha)\sim v(( a\alpha+n)(b\beta+m))$ which, if $v(\alpha)>0$ (hence also $v(\beta)>0$), results in the contradiction $v(\alpha)\sim v(n)+v(m)=0$. Therefore, we must have $v(\alpha)=0=v(\beta)$.

  Let $r$ be the residue class of $\alpha$ in $\mathbb F_p$. 
We have
\begin{equation}
  \label{eq:8}
  hr= a br^2 +( a m+bn)r+nm.
\end{equation}
View this as a degree $2$ equation in $r$, and let $\Delta \coloneqq( a m+bn-h)^2-4 a bnm$ be its discriminant. Because~\eqref{eq:8} has a solution in $\mathbb F_p$ (namely, $r$), its discriminant $\Delta$ must be a square in $\mathbb F_p$. As this happens for every sufficiently large $p$, by \Cref{fact:gw}, it follows that $\Delta$  is a square in $\mathbb Z$.

Set $A\coloneqq  a m$ and $B\coloneqq bn$. Observe that, since $n,m\ne 0$ by assumption, we also have $A,B\ne 0$. A routine calculation shows $\Delta=(A-B-h)^2-4Bh$. Let $L\in \mathbb Z$ be such that $\Delta=((A-B-h)+L)^2$. From $(A-B-h)^2-4Bh=((A-B-h)+L)^2$ it follows that $L(L-2h)=B(2L-4h)-2LA$, so $L$ is even. Set $\ell\coloneqq L/2$ and observe that the previous equality yields
  \begin{eqnarray}
    \label{eq:6}
    \ell(\ell-h)=B(\ell-h)-A\ell.
  \end{eqnarray}
In particular
we have $\ell\ne 0$ and $\ell\ne h$. View~\eqref{eq:6} as an affine equation in $A,B$, and parameterise its solutions as
  \[
    \begin{pmatrix}
      A\\B
    \end{pmatrix}
    =\begin{pmatrix}
      0\\\ell
     \end{pmatrix}
     +t\begin{pmatrix}
      \ell-h\\\ell
    \end{pmatrix}=
    \begin{pmatrix}
      t(\ell-h)\\(t+1)\ell
    \end{pmatrix}.
  \]
  By substituting this in $\Delta=((A-B-h)+L)^2$ we obtain $\Delta=(\ell-(t+1)h)^2$.  It follows that the solutions of~\eqref{eq:8} are
  \[
    r_1=-\frac{\ell t}{ A B} \quad\text{ and }\quad r_2=\frac{(1+t)(h-\ell)}{ A B}.
  \]
Note that $\ell,t$ do not depend on the choice of $p$.  Therefore, we may assume that $p$ is large also with respect to $\ell$ and $t$.
  
  We first consider the solution $r_1$. We have
   $A B\alpha+\ell t \equiv 0\pmod p$.
  Set
  \[
    \gamma\coloneqq A B\alpha+\ell t,\qquad \delta\coloneqq A B\beta+\ell t.
  \]
  By substituting in the original pattern, multiplying by $A B$ and adding $h\ell t$,
  \[
    h\gamma\sim h\delta\sim \gamma\delta-ht\gamma+\ell \delta.
  \]

  By \Cref{lemma:fpotmt} we have that $v(\gamma),v(\delta)>0$. We apply \Cref{lemma:smodcases}.
  \begin{itemize}
  \item   If $v(\delta)>v(\gamma)$ then either $ht=0$ or $h=-ht$. Both are contradictory as they imply $A=0$ and $B=0$ respectively.

  \item   If $v(\gamma)>v(\delta)$ then $\ell=0$ or $\ell =h$, and again  this implies the contradiction $AB=0$.

  \item   Suppose now $v(\gamma)=v(\delta)$. If $\ell =ht$ then $ a m=A=t(\ell -h)=ht(t-1)$ and $bn=B=(t+1)\ell =ht(t+1)$, so then~\ref{case:consprod} holds. Otherwise, $\ell =h(t+1)$, so $ a m=A=t(\ell -h)=ht^2$ and $bn=B=\ell (t+1)=h(t+1)^2$, as in~\ref{case:conssqr}. 
  \end{itemize}

  We now consider the solution $r_2$. In this case
$A B\alpha\equiv (1+t)(h-\ell )\pmod p$.
  Set
  \[
    \gamma\coloneqq  A B \alpha+(\ell -h)(1+t),\qquad \delta\coloneqq  A B \beta+(\ell -h)(1+t).
  \]
  Similarly as above, we obtain
  \[
    h\gamma\sim h\delta\sim \gamma\delta+(h-\ell )\gamma+h(1+t)\delta.
  \]
 By \Cref{lemma:fpotmt} $v(\gamma),v(\delta)>0$. As above, we apply \Cref{lemma:smodcases}.
  \begin{itemize}
  \item If $v(\delta)>v(\gamma)$, then either $h=\ell$ or $h=h-\ell$, so $\ell=0$. Both imply $AB=0$, a contradiction.
  \item If $v(\gamma)>v(\delta)$ then either $h(1+t)=0$ or $h=h(1+t)$, so $ht=0$. Again, both imply the contradiction $AB=0$.
  \item Assume now that $v(\gamma)=v(\delta)$. If $(h-\ell)+h(1+t)=0$ then  $h(2+t)=\ell $, hence $ a m=A=t(\ell -h)=ht(t+1)$ and $bn=B=(t+1)\ell=h(t+1)(t+2)$, so by shifting $t$ we fall in case~\ref{case:consprod} and we are done. If instead $h=h-\ell+h(1+t)$ then  $h(1+t)=\ell $, that is $\Delta=0$, so $r_2=r_1$ and we fall back to the previous case.\qedhere
  \end{itemize}
\end{proof}

\begin{rem}
 One can check that, in case~\ref{case:conssqr}, either $t$ or $-t$ equals $(qbn-qam-1)/2$. Similarly, in case~\ref{case:consprod}, either $t$ or $-1-t$ equals $(qbn-qam)/2$. From this, one may derive certain polynomial relations between $qam$ and $qbn$. As we will never use this, we leave details to the reader.
\end{rem}

\begin{eg}
  There are no $h, n\in \mathbb N$ such that the equation $hz=(x+n)(y+n)$ is PR.
\end{eg}

Consider now the equations $z=(x+1)(y+4)$ and $z=(4x+1)(y+1)$. They both fall in case~\ref{case:conssqr} of \Cref{thm:squares} with $t=1$, hence the partition regularity of neither of them is excluded by this result. Nevertheless, while we leave it open whether $z=(x+1)(y+4)$ is PR (\Cref{q:3pc}), easy parity considerations show that $z=(4x+1)(y+1)$ is not.

 Our \Cref{athm:newC} identifies, amongst the configurations respecting the conclusion of \Cref{athm:cosq}, which ones admit obstructions of this sort. Specifically, if such a configuration is PR, then a certain combination of the parameters needs to be an integer and, if this is the case, then the pattern may be equivalently rewritten in a certain canonical form.

The proof of \Cref{athm:newC} is split in the two lemmas below, which are stated in a technical way that will find further use in \Cref{sec:twopos}. Here we need to work in $\star \mathbb Q$, and \Cref{rem:overQ} becomes relevant.

\begin{lemma}\label{lemma:witcasea}
  Let $t\in \mathbb Z$ be such that $qa m=t^2$ and $qbn=(t+1)^2$.  Let $\alpha_0,\beta_0\in \star \mathbb Q$. Define
  \begin{itemize}
  \item $d\coloneqq -(qa m+qbn-1)/2q a b \in \mathbb Q$.
  \item $\alpha_1\coloneqq q a b(\alpha_0-d)$ and $\beta_1\coloneqq q a b(\beta_0-d)$.
  \item $\alpha_2\coloneqq \alpha_1-t(t+1)$ and $\beta_2\coloneqq \beta_1-t(t+1)$.
  \end{itemize}
  The following hold.
  \begin{enumerate}[label=(\Alph*)]
  \item\label{point:a2toa0}  We have $\alpha_2=qa b \alpha_0$.

  \item \label{point:Ba} The following are equivalent.
  \begin{enumerate}[label=(\roman*)]
  \item\label{point:caseareductioni}  $\alpha_0\sim \beta_0\sim q( a \alpha_0+n)(b\beta_0+m)$. 
 \item \label{point:caseareductionii} $\alpha_1\sim\beta_1\sim \alpha_1\beta_1+ (t+1)\beta_1-t\alpha_1$.
 \item\label{point:caseareductioniii}  $\alpha_2\sim\beta_2\sim (\alpha_2+(t+1)^2)(\beta_2+t^2)$.
 \end{enumerate}
\item\label{point:Ca}  If the equivalent conditions in point~\ref{point:Ba} hold, the following are equivalent.
 \begin{enumerate}[label=(\arabic*)]
 \item\label{point:caseareduction1} $\alpha_0\in \star \mathbb N$.
 \item\label{point:caseareduction2} $d\in \mathbb Z$ and $\alpha_1\in \star \mathbb N$.
 \item\label{point:caseareduction3}  $d\in \mathbb Z$ and $\alpha_2\in \star \mathbb N$.
 \end{enumerate}
  \item\label{point:Da} If the conditions in points~\ref{point:Ba} and~\ref{point:Ca} hold, then $\alpha_1$ is divisible by every natural number.
\end{enumerate}
\end{lemma}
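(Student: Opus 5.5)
The plan is to make everything explicit by a direct computation, with $Q\coloneqq qab$. By definition $Qd=-(t^2+(t+1)^2-1)/2=-t(t+1)$, hence $\alpha_1=Q\alpha_0+t(t+1)$ and $\alpha_2=Q\alpha_0$, which is point~\ref{point:a2toa0}. For point~\ref{point:Ba} I would use the factorisation
\[
q(a\alpha_0+n)(b\beta_0+m)=\tfrac1Q\,(Q\alpha_0+qbn)(Q\beta_0+qam)=\tfrac1Q\,(\alpha_2+(t+1)^2)(\beta_2+t^2).
\]
The maps $x\mapsto Qx$ and $x\mapsto x+c$ (with $c\in\mathbb Q$) are bijections of $\mathbb Q$, so they preserve and reflect $\sim$ on $\star\mathbb Q$. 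Multiplying everything in~\ref{point:caseareductioni} by $Q$ therefore gives exactly~\ref{point:caseareductioniii}. Next, substituting $\alpha_2=\alpha_1-t(t+1)$ and $\beta_2=\beta_1-t(t+1)$ turns the product into $(\alpha_1+t+1)(\beta_1-t)=\alpha_1\beta_1+(t+1)\beta_1-t\alpha_1-t(t+1)$. Translating all three terms by $t(t+1)$ then gives~\ref{point:caseareductionii}.

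The core of the proof is the divisibility statement, which I would prove first in the following form: if~\ref{point:caseareductionii} holds with $\alpha_1\ne0$, then $v_p(\alpha_1)>\mathbb N$ for every prime $p$. Fix $p$ and set $w\coloneqq v_p(\alpha_1)=v_p(\beta_1)$; the equality holds because $v_p$ is a function on $\mathbb Q$ and so is $\sim$-invariant.
\begin{itemize}
\item First I rule out $w<0$. In that case the term $\alpha_1\beta_1$ has valuation $2w$, strictly below the valuations of $(t+1)\beta_1$ and $t\alpha_1$. Hence $v_p$ of the right-hand side equals $2w$, and we would get $w\sim 2w$. By \Cref{pr:PropSim} this forces $w=2w$, i.e.\ $w=0$, a contradiction.
\item So $w\ge0$, and $\alpha_1,\beta_1$ have a common class $\rho\in\mathbb Z_p$, as in \Cref{rem:smoddet}.
\item Passing the relation in~\ref{point:caseareductionii} to $\mathbb Z_p$ gives $\rho=\rho^2+(t+1)\rho-t\rho=\rho^2+\rho$. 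Hence $\rho^2=0$, so $\rho=0$, i.e.\ $v_p(\alpha_1)>\mathbb N$.
\end{itemize}

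For point~\ref{point:Ca} I would argue as follows. Assume~\ref{point:caseareduction1}. Then $\alpha_0$ is infinite, so $\alpha_1=Q\alpha_0+t(t+1)$ is positive and infinite, and its denominator divides the standard denominator of $Q$. Combining this with the valuation result at the finitely many primes dividing that denominator, I conclude $\alpha_1\in\star\mathbb Z$. Indeed, it is divisible by every standard natural number. Writing $Q=k/h$, the number $\alpha_1/Q=h\alpha_1/k$ is then in $\star\mathbb Z$, so $d=\alpha_0-\alpha_1/Q\in\star\mathbb Z\cap\mathbb Q=\mathbb Z$. This gives~\ref{point:caseareduction2}.

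Conversely, assume~\ref{point:caseareduction2}. Since $\alpha_1\in\star\mathbb N$, the valuation result makes $\alpha_1/Q$ an element of $\star\mathbb Z$, so $\alpha_0=\alpha_1/Q+d\in\star\mathbb Z$. It is positive because it is infinite; infinitude holds since $\alpha_0\sim\beta_0\sim\ldots$ is a nontrivial witness.

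The equivalence of~\ref{point:caseareduction2} and~\ref{point:caseareduction3} is immediate. The numbers $\alpha_1$ and $\alpha_2$ differ by the standard integer $t(t+1)$, and both are positive infinite whenever either lies in $\star\mathbb N$.

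Point~\ref{point:Da} is then the valuation result applied to $\alpha_1\in\star\mathbb N$.

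The main obstacle is the subtlety in~\ref{point:Ca}. An element of $\star\mathbb Q$ with all standard valuations nonnegative need not lie in $\star\mathbb Z$, since nonstandard primes could divide its denominator. This is why I would always control the denominator explicitly: it is bounded by that of $Q$ (or is trivial, when $\alpha_1\in\star\mathbb N$). Only after that would I invoke the $p$-adic argument, and only at finitely many standard primes.
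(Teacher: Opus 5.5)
\textbf{Gap in the valuation step.} Your computations for (A) and (B) are correct. So is the $\mathbb Z_p$ step $\rho=\rho^2+\rho\Rightarrow\rho=0$, which is exactly how the paper proves (2)$\Rightarrow$(1).

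The gap is the justification of $w\coloneqq v_p(\alpha_1)=v_p(\beta_1)$. \Cref{pr:PropSim} only gives $v_p(\alpha_1)\sim v_p(\beta_1)$. This forces equality when the value is standard, but equivalent hyperintegers can differ by an infinite amount. The paper uses this repeatedly, e.g.\ $v_p(\delta)-v_p(\gamma)>\mathbb N$ in the proof of \Cref{lemma:witcaseb}.

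So your first bullet fails when $w$ is infinite and negative. With $w'\coloneqq v_p(\beta_1)$, all you get is $w\sim w'\sim w+w'$. After changing signs this is just a Schur triple, which is perfectly consistent. Hence your ``valuation result'', stated for an arbitrary nonzero $\alpha_1\in\star\mathbb Q$, is not established by your argument.

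\textbf{Repair.} Apply the valuation result only where $v_p(\alpha_1)$ is bounded below by a standard integer; this covers every use you make of it.
\begin{itemize}
\item In (2)$\Rightarrow$(1) and in (D), $\alpha_1,\beta_1\in\star\mathbb N$. The first bullet is then unnecessary, and the second only needs $v_p(\alpha_1),v_p(\beta_1)\ge 0$, not their equality.
\item In (1)$\Rightarrow$(2), write $Q=k/h$. Then $h\alpha_1=k\alpha_0+ht(t+1)\in\star\mathbb Z$, so $v_p(\alpha_1)\ge -v_p(h)$. A negative $w$ is therefore standard, so $w'=w$ and your bullet goes through.
\end{itemize}
With that restriction, your denominator control and the conclusion $d\in\star\mathbb Z\cap\mathbb Q=\mathbb Z$ are correct.

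\textbf{Minor points.} The lemma assumes neither that $\alpha_0$ is infinite nor that the witness is nontrivial, but you need neither.
\begin{itemize}
\item $\alpha_1=Q\alpha_0+t(t+1)>0$ because $t(t+1)\ge 0$ for every integer $t$.
\item A positive hyperinteger divisible by every standard natural number is infinite. This gives positivity of $\alpha_0=\alpha_1/Q+d$ and of $\alpha_2$.
\end{itemize}

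\textbf{Comparison with the paper.} The paper proves (1)$\Rightarrow$(2) without ever meeting negative valuations. Since $\alpha_0\in\star\mathbb N$, it has a class $\rho\in\mathbb Z_p$ directly. The quadratic $\rho=q(a\rho+n)(b\rho+m)$ has zero discriminant and double root $d$, so $d\in\mathbb Z_p$ for every $p$, and $\alpha_0-d$ is divisible by every natural number. Your route moves to $\alpha_1$ first, which is why you must handle the primes dividing the denominator of $Q$. Once restricted as above, it works, and it is somewhat more explicit than the paper about why the relevant quantities lie in $\star\mathbb Z$.
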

\begin{proof}
    We leave it to the reader to check part~\ref{point:a2toa0} and to  perform the changes of variables showing that~\ref{point:caseareductioni}, \ref{point:caseareductionii} and \ref{point:caseareductioniii} are equivalent. The equivalence $\ref{point:caseareduction2}\Leftrightarrow\ref{point:caseareduction3}$ is immediate by definition of $\alpha_2$ and $\beta_2$.

    For    $\ref{point:caseareduction1}\Rightarrow\ref{point:caseareduction2}$, observe that $d=-t(t+1)/q a b$. Let $p$ be an arbitrary prime and let $\rho$ be the common class of $\alpha_0,\beta_0$ in $\mathbb Z_p$. We have
  \[
    \rho=q( a \rho+n)(b\rho+m)= q a b\rho^2+(q a m+kbn)\rho+knm
  \]
  whence $q a b \rho^2+2t(t+1)\rho+qnm=0$. Solving with the usual quadratic formula in $\mathbb Q_p$, we obtain that $\Delta=0$ and that $\rho=-t(t+1)/q a b$, which equals $d$.
  If $d\notin \mathbb Z$, then there is a prime $p$ such that  $v_p(d)<0$, contradicting $d=\rho\in \mathbb Z_p$. By construction, $\alpha_0-d$ is divisible by every power of every prime, hence by every natural number. This implies that $\alpha_1\in \star \mathbb N$, completing the proof of~\ref{point:caseareduction2}, and that $\alpha_1$ is itself divisible by every natural number, that is, point~\ref{point:Da}.

For  $\ref{point:caseareduction2}\Rightarrow\ref{point:caseareduction1}$, it suffices to observe that, for every $p$, the class of $\alpha_1$ in $\mathbb Z_p$ satisfies $\rho=\rho^2+\rho$, that is, $\rho=0$. Therefore, $\alpha_0=\alpha_1/qab+d\in \star\mathbb N$.
\end{proof}

\begin{lemma}\label{lemma:witcaseb}
  Let $t\in \mathbb Z\setminus\set 0$ be such that $q a m=t(t-1)$ and $qbn=t(t+1)$.  Let $\alpha_0,\beta_0\in \star \mathbb Q$. Define
  \begin{itemize}
  \item $d\coloneqq -( a m + bn)/2 a b\in \mathbb Q$.
  \item $\alpha_1\coloneqq q a b(\alpha_0-d)$ and $\beta_1\coloneqq q a b(\beta_0-d)$.
  \item $\alpha_2\coloneqq \alpha_1-t^2$ and $\beta_2\coloneqq \beta_1-t^2$.
  \end{itemize}
  The following hold.
  \begin{enumerate}[label=(\Alph*)]
  \item\label{point:a2toa0}  We have $\alpha_2=q a b \alpha_0$.

  \item\label{point:Bb} The following are equivalent.
    \begin{enumerate}[label=(\roman*)]
  \item\label{point:casebreductioni}  $\alpha_0\sim \beta_0\sim q( a \alpha_0+n)(b\beta_0+m)$. 
 \item \label{point:casebreductionii} $\alpha_1\sim\beta_1\sim \alpha_1\beta_1+t\beta_1-t\alpha_1$.
 \item\label{point:casebreductioniii}  $\alpha_2\sim\beta_2\sim (\alpha_2+t(t+1))(\beta_2+t(t-1))$.
 \end{enumerate}
\item\label{point:Cb}  If the equivalent conditions in point~\ref{point:Bb} hold, the following are equivalent.
 \begin{enumerate}[label=(\arabic*)]
 \item\label{point:casebreduction1} $\alpha_0\in \star \mathbb N$.
 \item\label{point:casebreduction2} $d\in \mathbb Z$ and $\alpha_1\in \star \mathbb N$.
 \item\label{point:casebreduction3}  $d\in \mathbb Z$ and $\alpha_2\in \star \mathbb N$.
\end{enumerate}
\item \label{point:Db} If the conditions in points~\ref{point:Ba} and~\ref{point:Ca} hold, then $\alpha_1$ is divisible by every natural number.
\end{enumerate}
\end{lemma}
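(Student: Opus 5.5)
The proof will mirror that of \Cref{lemma:witcasea}, with the roles of $t(t+1)$ and $t^2$ swapped. First we verify point~\ref{point:a2toa0} and the changes of variables in point~\ref{point:Bb}. Since $qam=t(t-1)$ and $qbn=t(t+1)$, we get
\[
qabd=-(qam+qbn)/2=-t^2,
\]
hence $\alpha_1=qab\alpha_0+t^2$ and $\alpha_2=qab\alpha_0$.

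For \ref{point:casebreductioni}$\Leftrightarrow$\ref{point:casebreductioniii}, multiply $q(a\alpha_0+n)(b\beta_0+m)$ by $qab$. This gives
\[
(qab\alpha_0+qbn)(qab\beta_0+qam)=(\alpha_2+t(t+1))(\beta_2+t(t-1)).
\]
Multiplication by the standard rational $qab$ preserves and reflects $\sim$, because it is an injection $\star\mathbb Q\to\star\mathbb Q$ coming from a standard map. So $\alpha_0\sim\beta_0\sim q(\cdots)$ is equivalent to $\alpha_2\sim\beta_2\sim(\alpha_2+t(t+1))(\beta_2+t(t-1))$.

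For \ref{point:casebreductioniii}$\Leftrightarrow$\ref{point:casebreductionii}, substitute $\alpha_2=\alpha_1-t^2$ and $\beta_2=\beta_1-t^2$. The product becomes $(\alpha_1+t)(\beta_1-t)=\alpha_1\beta_1-t\alpha_1+t\beta_1-t^2$. Adding $t^2$ to every term, which is again a standard bijection, gives \ref{point:casebreductionii}.

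Point~\ref{point:Cb}, direction \ref{point:casebreduction1}$\Rightarrow$\ref{point:casebreduction2}, is the main step. Fix any prime $p$ and let $\rho\in\mathbb Z_p$ be the common class of $\alpha_0,\beta_0$, which exists by \Cref{rem:smoddet}. Passing to $\mathbb Z_p$ in \ref{point:casebreductioni} gives
\[
\rho=qab\rho^2+(qam+qbn)\rho+qnm.
\]
Set $\sigma=qab\rho+t^2$, which is the image of $\alpha_1$ (whose class in $\mathbb Z_p$ makes sense because $qab$ is a $p$-adic number). Using the equivalent form~\ref{point:casebreductionii}, $\sigma$ satisfies $\sigma=\sigma^2+t\sigma-t\sigma=\sigma^2$, so $\sigma\in\{0,1\}$.

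The obstacle is that, unlike in case~(a), the equation $\sigma=\sigma^2$ has two roots and the root $\sigma=1$ must be excluded. To do this, apply the same reasoning to $\gamma:=\alpha_1-1$ and $\delta:=\beta_1-1$, which satisfy $\gamma\sim\delta\sim\gamma\delta+(t+1)\delta-(t-1)\gamma$ (the constant term vanishes, as in \Cref{rem:cofpmt}). If $\sigma=1$ for some large $p$, \Cref{lemma:fpotmt} gives $v_p(\gamma),v_p(\delta)>0$. Then \Cref{lemma:smodcases} with $h=1$, $\theta=\gamma\delta$, $k,\ell\in\{t+1,-(t-1)\}$ forces $t+1\in\{0,1\}$ or $-(t-1)\in\{0,1\}$, or, when the valuations are equal, $2\in\{0,1\}$. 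This yields $t\in\{0,\pm1\}$ or a contradiction. Since $t\neq0$, the cases $t=\pm1$ make one of $qam,qbn$ zero, and should be handled separately or excluded by context. For small primes, instead use that the class of $\alpha_1$ in $\mathbb Z_p$ is constant along the $\sim$-class; I would argue via valuations in $\mathbb Q_p$ as in \Cref{lemma:witcasea}. If this is troublesome, alternatively show that $\sigma=1$ gives $\rho=(1-t^2)/qab$, and that the root $d$ is then forced by the infinitude of the witnesses.

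Once $\sigma=0$ for every $p$, we have $\rho=-t^2/qab=d$. Since $\rho\in\mathbb Z_p$ for all $p$, it follows that $d\in\mathbb Z$. Then $\alpha_0-d\in\star\mathbb Z$ is divisible by all standard powers of every prime, so $\alpha_1\in\star\mathbb N$ (using that it is positive and infinite) and $\alpha_1$ is divisible by every natural number, which is point~\ref{point:Db}. For the converse \ref{point:casebreduction2}$\Rightarrow$\ref{point:casebreduction1}: if $d\in\mathbb Z$ and $\alpha_1\in\star\mathbb N$, then $\alpha_0=\alpha_1/qab+d$, and we need $qab\mid\alpha_1$. Again $\sigma\in\{0,1\}$ in each $\mathbb Z_p$, and after excluding $\sigma=1$ as above, $\alpha_0\in\star\mathbb N$. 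Finally, \ref{point:casebreduction2}$\Leftrightarrow$\ref{point:casebreduction3} holds because $\alpha_2=\alpha_1-t^2$ with $t^2$ standard and $\alpha_1$ infinite.
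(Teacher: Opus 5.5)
Your computations for (A), (B) and (2)$\Leftrightarrow$(3) are fine. You also correctly reduce (C) to excluding the root $\sigma=1$ of $\sigma=\sigma^2$ at each prime, and your use of \Cref{lemma:smodcases} is correct as far as it goes. The gap is that the exclusion is carried out only for primes $p$ large with respect to $t$ and $h$ (where $q=k/h$), since that lemma needs $p>h+\abs k+\abs\ell$. Even there you leave $t=\pm1$ open.

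Both directions of (C), and (D), need $\sigma=0$ at \emph{every} prime, and the content lies at the small primes. For $p\nmid 2ab$ one has $d\in\mathbb Z_p$ for free, while $p=2$ is never large enough for \Cref{lemma:smodcases}. Concretely, $x,y,(x+2)(3y+2)$ has $t=2$ and $d=-4/3$. The $3$-adic class of a witness must be $\rho=-1$, i.e.\ $\sigma=1$, so the conclusion $d\in\mathbb Z$ (hence non-PR) rests entirely on ruling out $\sigma=1$ at $p=3$. Your argument does not cover this, since it needs $p>5$.

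Your fallbacks do not close the gap. In \Cref{lemma:witcasea} the $p$-adic equation is $\sigma=\sigma^2+\sigma$, whose only root is $0$, so there is nothing to imitate. ``Forced by the infinitude of the witnesses'' is not a mechanism. Nor can $t=\pm1$ be excluded by context: the statement allows it, and it is exactly the case $nm=0$ in which the lemma is used for \Cref{athm:newC} and \Cref{athm:div}.

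The missing idea is an exclusion argument valid at every prime, which is what the paper supplies.
\begin{itemize}
\item If $\sigma=1$ at $p$, set $\gamma\coloneqq kab\alpha_0+(t^2-1)h$ and $\delta\coloneqq kab\beta_0+(t^2-1)h$. Unlike your $\alpha_1-1$, these lie in $\star\mathbb N$, and they satisfy $h\gamma\sim h\delta\sim\gamma\delta+h(1-t)\gamma+h(1+t)\delta$.
\item Use part (2) of \Cref{lemma:fpotmt}, i.e.\ the class in $\mathbb Z_p$ rather than in $\mathbb F_p$. This makes $v_p(\gamma),v_p(\delta)$ infinite, so $\gamma\delta$ is invisible at every standard $p$-adic precision.
\item Replace ``$p$ large'' by two comparisons. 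Equivalent numbers cannot have valuations at finite nonzero distance, and both sides must have the same full unit part $\smod_{p^\omega}\in\mathbb Z_p^\times$.
\item If $v_p(\gamma)<v_p(\delta)$, the gap is infinite. This forces $v_p(1-t)=0$ and $\smod_{p^\omega}(1-t)=1$, i.e.\ $t=0$; symmetrically with $1+t$ in the other case.
\item If $v_p(\gamma)=v_p(\delta)$, then $\delta-\gamma$ is infinitely more divisible than $\gamma+\delta$. The latter has valuation $v_p(\gamma)+v_p(2)$ and last nonzero digit $2\smod_p(\gamma)$, which is a contradiction both for $p=2$ and for odd $p$.
\item The cases $t=\pm1$ are handled directly. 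For $t=1$ the relation reads $h\delta\sim\delta(\gamma+2h)$, contradicted by $v_2$ when $p=2$ and by $\smod_p$ when $p$ is odd.
\end{itemize}
The same argument, applied to $\alpha_1-1,\beta_1-1$, is what (2)$\Rightarrow$(1) needs. Your ``after excluding $\sigma=1$ as above'' inherits the same gap.
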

\begin{proof}
As in the proof of \Cref{lemma:witcasea}, we prove $\ref{point:casebreduction1}\Leftrightarrow\ref{point:casebreduction2}$, as well as point~\ref{point:Db}, and leave it to the reader to check the remaining parts.
Write $q=k/h$, for suitable coprime $k,h\in \mathbb N$.

For the implication  $\ref{point:casebreduction1}\Rightarrow\ref{point:casebreduction2}$, if $p$ is any prime and $\rho$ is the common class of $\alpha_0,\beta_0$ in $\mathbb Z_p$, by solving the degree $2$ equation given by \ref{point:casebreductioni}
in $\mathbb Q_p$, we see that $\rho$ must satisfy $k a b \rho + ht^2=0$ or $k a b \rho + (t^2-1)h=0$.
  \begin{claim}\label{claim:rhoAp}
    For every prime $p$ we are in the case $k a b \rho+ht^2=0$.
  \end{claim}
  \begin{claimproof}
    In fact, if $p$ is such that the other case holds, set $\gamma\coloneqq k a b \alpha_0 + (t^2-1)h$ and $\delta\coloneqq k a b \beta_0 + (t^2-1)h$ and observe that $v_p(\gamma), v_p(\delta)$ are both infinite by \Cref{lemma:fpotmt}. A routine calculation shows that~\ref{point:casebreductioni} implies
    \begin{equation}
      \label{eq:16}
      h\gamma\sim h\delta\sim \gamma\delta+h(1-t)\gamma+h(1+t)\delta.
    \end{equation}

    Let $s\coloneqq\smod(\gamma)$. We have three cases, each split into various subcases.

    Assume first that $t=1$. Then~\eqref{eq:16} gives $h\delta\sim \gamma\delta+ 2h\delta$. If $p=2$, the $2$-adic valuation of the left hand side is $v(h)+v(\delta)$, while that of the right hand side is $1+v(h)+v(\delta)$, a contradiction since there are no equivalent points at finite nonzero distance. If $p\ne 2$, then $\smod(\gamma\delta+ 2h\delta)=2\smod(h)s$, therefore $\smod(h)s=2\smod(h)s$, again a contradiction.

    The case $t=-1$ is analogous, so we now assume that $t$ is neither $1$ nor $-1$ (nor $0$, by assumption).

     Let $\smod_{p^\omega} \from \star\mathbb N\to \mathbb Z_p\setminus\set 0$ be the function sending $x$ to the class in $\mathbb Z_p$ of $x/p^{v_p(x)}$. Intuitively, $\smod_{p^\omega}(x)$ is calculated by writing $x$ in base $p$, discarding the rightmost $0$ digits and taking the remaining rightmost $\omega$ digits.

 If $v_p(\gamma)<v_p(\delta)$, then $v_p(\delta)-v_p(\gamma)>\mathbb N$. This together with~\eqref{eq:16} implies $v(h)+v(\gamma)\sim v(h)+v(1-t)+v(\gamma)$, hence $v(1-t)=0$, so $p\nmid 1-t$. By applying $\smod_{p^\omega}$ we then find $\smod_{p^\omega}(\gamma)=\smod_{p^\omega}(1-t)\smod_{p^\omega}(\gamma)$, hence $\smod_{p^\omega}(1-t)=1$, which implies $t=0$, a contradiction.

The argument in the case $v_p(\gamma)>v_p(\delta)$ is completely analogous, and left to the reader.

    If $v_p(\gamma)=v_p(\delta)$, let us rewrite~\eqref{eq:16} as
    \begin{equation*}      
h\gamma\sim h\delta\sim      \gamma\delta+ht(\delta-\gamma)+h(\gamma+\delta).
    \end{equation*}
Observe that, because $\smod_{p^\omega}(\gamma)=\smod_{p^\omega}(\delta)$ and $v(\gamma)=v(\delta)$, we have $v(\delta-\gamma)-v(\gamma+\delta)>\mathbb N$. If $p=2$, by checking digits in base $2$ we see that $v(\gamma)\sim v(\gamma+\delta)=v(\gamma)+1$, a contradiction. If $p\ne 2$, we obtain $\smod(\gamma)=2\smod(\gamma)$, again a contradiction.
\end{claimproof}
To show that $d\in \mathbb Z$, observe that by \Cref{claim:rhoAp} and definition of $d$ we have $d=\rho$. Hence, for every $p$, the $p$-adic valuation of $d$ is nonnegative, and we have the conclusion.

It remains to verify that $\alpha_1\in \star \mathbb N$.
But, by construction and the fact that $d=\rho$, we see that $\alpha_0-d$ and $\beta_0-d$ are divisible by every element of $\mathbb N$, and we have the conclusion, as well as point~\ref{point:Db}.

For $\ref{point:casebreduction2}\Rightarrow\ref{point:casebreduction1}$, since $\alpha_0=\alpha_1/qab+d$, it suffices to show that $\alpha_1$ is divisible by every natural number. For every prime $p$, the class $\rho$ in $\mathbb Z_p$ of $\alpha_1$ satisfies $\rho=\rho^2$, hence it must equal $0$ or $1$. We show that the second case never happens. In fact, if $p$ is a counterexample and we set $\gamma\coloneqq \alpha_1-1$ and $\delta\coloneqq \beta_1-1$, we obtain $\gamma\sim \delta\sim \gamma\delta+(1-t)\gamma+(t+1)\delta$. We conclude by essentially the same proof as that of \Cref{claim:rhoAp}.
\end{proof}

\begin{rem}\label{rem:symchange}
In \Cref{lemma:witcasea,lemma:witcaseb}, if we replace $t$ by its symmetric parameter given by \Cref{rem:thmbsym}, the changes of variables relating $\alpha_0$ to $\alpha_1$, $\alpha_2$ remain unchanged.
\end{rem}

\begin{proof}[Proof of \Cref{athm:newC}]
  By \Cref{lemma:witcasea,lemma:witcaseb} we are only left to deal with the case where $qam=qbn=0$. But this is \Cref{rem:qxy}.
\end{proof}

\begin{eg}\label{thm:nld}
By \Cref{athm:newC} and partition regularity of $z=x(y+1)$ (\Cref{co:goswami}), the equation $z=x(18y+1)$ is PR.
More generally, for every coprime $b,m\in \mathbb N$, we have partition regularity of the pattern
  \[
    mx,\quad  my,\quad   x(by+m).
  \]
\end{eg}

\section{Several products of two linear polynomials}\label{sec:twopos}

In this section we study configurations of the form
\begin{equation}
  \label{eq:24}
  x,\quad   y,\quad  q_1(a_1 x+n_1)(b_1y+m_1),\quad  q_2(a_2x+n_2)(b_2y+m_2)
\end{equation}
for $q_i\in \mathbb Q_{>0}$, $a_i,b_i\in \mathbb N$ and $m_i, n_i\in \mathbb Z$, with the goal of proving \Cref{athm:4pieces,athm:div} and \Cref{aco:5pezzi}.

For patterns where all constant terms are nonzero, \Cref{athm:4pieces} may be proven by a quick argument using \Cref{athm:cosq}. Even if we do not know whether the conclusion of the latter holds when some constant term is null, we will still be able to prove \Cref{athm:4pieces} by slightly different arguments.

We begin by an easy observation that will be used without mention throughout the rest of the section.
\begin{lemma}\label{lemma:qabqab}
  If the pattern \eqref{eq:24} is PR, then $q_1a_1b_1=q_2a_2b_2$.
\end{lemma}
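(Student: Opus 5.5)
We argue with the nonstandard characterisation of partition regularity, \Cref{thm:NScharPR}. Suppose the pattern \eqref{eq:24} is PR. Then there are infinite $\alpha,\beta\in\star\mathbb N$ such that
\[
\alpha\sim\beta\sim q_1(a_1\alpha+n_1)(b_1\beta+m_1)\sim q_2(a_2\alpha+n_2)(b_2\beta+m_2).
\]
Call the last two elements $\xi_1$ and $\xi_2$. We will compare $\xi_1$ and $\xi_2$ through their Archimedean classes and then invoke \Cref{pr:ClassiArchimedee}.

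We first expand each product:
\[
\xi_i=q_ia_ib_i\,\alpha\beta+q_ia_im_i\,\alpha+q_ib_in_i\,\beta+q_in_im_i .
\]
Since $\alpha,\beta$ are infinite, $\alpha\beta$ dominates every other term. Indeed, $\alpha/(\alpha\beta)=1/\beta$ and $\beta/(\alpha\beta)=1/\alpha$ are infinitesimal, and so are the constants divided by $\alpha\beta$. Hence
\[
\xi_i/(\alpha\beta)=q_ia_ib_i+\varepsilon_i
\]
with $\varepsilon_i$ infinitesimal, where $q_ia_ib_i>0$ is a standard positive rational. Dividing the two expressions,
\[
\xi_1/\xi_2=\frac{q_1a_1b_1+\varepsilon_1}{q_2a_2b_2+\varepsilon_2},
\]
which is finite and not infinitesimal. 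So $\xi_1$ and $\xi_2$ lie in the same Archimedean class, and
\[
\st(\xi_1/\xi_2)=q_1a_1b_1/(q_2a_2b_2).
\]

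Next, $\xi_1\sim\xi_2$, and both are infinite: each is a positive integer (being $\sim\alpha\in\star\mathbb N$) and lies in the class of $\alpha\beta$. Therefore \Cref{pr:ClassiArchimedee} applies and gives $\st(\xi_1/\xi_2)=1$, that is, $q_1a_1b_1=q_2a_2b_2$.

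There is no real obstacle in this argument. The only points needing care are checking that both $\xi_i$ are infinite and in $\star\mathbb N$, so that \Cref{pr:ClassiArchimedee} is applicable, and that the leading coefficients $q_ia_ib_i$ are nonzero. Both follow from $q_i>0$, $a_i,b_i\in\mathbb N$, and the fact that $\xi_i\sim\alpha$.
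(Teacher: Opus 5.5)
Your proof is correct and is essentially the paper's argument: both note that each product piece is asymptotic to $q_ia_ib_i\alpha\beta$, then apply \Cref{pr:ClassiArchimedee} to the two $\sim$-equivalent pieces to conclude that the ratio of leading coefficients is $1$. The only difference is that you write out the infinitesimal bookkeeping, and the check that both pieces are infinite elements of $\star\mathbb N$, which the paper leaves implicit.
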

\begin{proof}
  Let $\alpha,\beta$ be witnesses of partition regularity. Observe that $q_i(a_i\alpha+n_i)(b_i\beta+m_i)$ is asymptotic to $q_ia_ib_i \alpha\beta$, that is, their ratio has standard part $1$. By \Cref{pr:ClassiArchimedee}
  \[
\st\left(\frac{q_2(a_2\alpha+n_2)(b_2\beta+m_2)}{q_1(a_1\alpha+n_1)(b_1\beta+m_1)}\right)=1
  \]
  hence $1=\st(q_2a_2b_2\alpha\beta/q_1a_1b_1\alpha\beta)$ and the conclusion follows.
\end{proof}
\begin{pr}\label{pr:thmcne0}
  In the case $m_1n_1m_2n_2\ne 0$, the conclusion of \Cref{athm:4pieces} holds.
\end{pr}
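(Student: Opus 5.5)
The plan is to reduce the four-piece pattern to two three-piece patterns and compare their $p$-adic data. Every subconfiguration of a partition regular configuration is partition regular, so for each $i\in\set{1,2}$ the pattern $x,y,q_i(a_ix+n_i)(b_iy+m_i)$ is PR. Since $m_in_i\ne 0$, \Cref{thm:squares} applies to each of them. Set $A_i\coloneqq q_ia_im_i$ and $B_i\coloneqq q_ib_in_i$. Then there is $t_i\in\mathbb Z$ with either $(A_i,B_i)=(t_i^2,(t_i+1)^2)$ (case (a)) or $(A_i,B_i)=(t_i(t_i-1),t_i(t_i+1))$ (case (b)). Moreover $A_iB_i\ne 0$.

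By \Cref{lemma:qabqab} we have $Q\coloneqq q_1a_1b_1=q_2a_2b_2$. Expanding gives
\[
q_i(a_ix+n_i)(b_iy+m_i)=Qxy+A_ix+B_iy+A_iB_i/Q,
\]
so each polynomial is determined by $Q$ and the pair $(A_i,B_i)$. The crossed polynomial $q_2(b_2x+m_2)(a_2y+n_2)$ is the one with the pair swapped, $(B_2,A_2)$. Thus cases~\ref{case:really3} and~\ref{case:cross} of \Cref{athm:4pieces} say precisely that $(A_1,B_1)=(A_2,B_2)$ or $(A_1,B_1)=(B_2,A_2)$. This is what I will prove.

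Next I take common witnesses $\alpha,\beta\in\star\mathbb N\setminus\mathbb N$ with
\[
\alpha\sim\beta\sim q_1(a_1\alpha+n_1)(b_1\beta+m_1)\sim q_2(a_2\alpha+n_2)(b_2\beta+m_2).
\]
For each prime $p$, let $\rho$ be the common class of $\alpha$ and $\beta$ in $\mathbb Z_p$. The implication (1)$\Rightarrow$(2) of \Cref{lemma:witcasea} (case (a)) or \Cref{lemma:witcaseb} (case (b), where $t_i\ne0$ because $A_i\ne0$) is applied to the $i$-th polynomial. Its proof shows that $\rho$ equals the rational number $d_i$, which is $-t_i(t_i+1)/Q$ in case (a) and $-t_i^2/Q$ in case (b). Since $\rho$ does not depend on $i$, we get $d_1=d_2$ as elements of $\mathbb Z_p$, hence as rationals. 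Writing $D_i\coloneqq t_i(t_i+1)$ in case (a) and $D_i\coloneqq t_i^2$ in case (b), this gives $D_1=D_2$.

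It remains to do elementary casework.
\begin{enumerate}
\item If both indices are in case (a), then $t_1(t_1+1)=t_2(t_2+1)$ forces $t_2=t_1$ or $t_2=-t_1-1$. These give equal or swapped pairs respectively.
\item If both are in case (b), then $t_1^2=t_2^2$ gives $t_2=\pm t_1$, and $t_2=-t_1$ swaps the pair.
\item In the mixed case, $t(t+1)=s^2$ has only the integer solutions with $s=0$. That would make some $A_i$ or $B_i$ vanish, which is excluded.
\end{enumerate}

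The main point to check carefully is that the lemmas really pin down $\rho$ to a single value $d_i$ for every prime, and not just to one root of a quadratic. In case (a) this holds because the discriminant vanishes. In case (b) it is \Cref{claim:rhoAp}. Both only use the relation $\alpha\sim\beta\sim f_i(\alpha,\beta)$ together with $\alpha\in\star\mathbb N$, which our witnesses satisfy.
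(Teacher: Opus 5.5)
Your proof is correct and takes essentially the same route as the paper. You apply \Cref{thm:squares} to both three-piece subpatterns and use $q_1a_1b_1=q_2a_2b_2$. The $p$-adic information from \Cref{lemma:witcasea,lemma:witcaseb} then forces $t_1(t_1+1)=t_2(t_2+1)$, $t_1^2=t_2^2$, or an impossible mixed equation, followed by the same casework. The only differences are cosmetic. You compare the exact class $\rho=d_i\in\mathbb Q$ in $\mathbb Z_p$, so any single prime suffices, whereas the paper reads off the residue of $\alpha_2=q_1a_1b_1\alpha_0$ in $\mathbb F_p$ for a large prime via part (D). You also make explicit the $(A_i,B_i)$ bookkeeping that the paper leaves implicit.
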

\begin{proof}
  Assume that~\eqref{eq:24} is PR and let $\alpha_0,\beta_0$ be witnesses.  Apply \Cref{athm:cosq} to the two 3-piece patterns $x,y,q_i(a_i x+n_i)(b_i y+m_i)$ for $i=1$ and $i=2$, obtaining $t_1,t_2\in \mathbb Z$ witnessing its conclusion. By \Cref{lemma:qabqab} we have $q_1a_1b_1=q_2a_2b_2$.

  It follows that the changes of variables in \Cref{lemma:witcasea,lemma:witcaseb} bringing $\alpha_0$ to $\alpha_2= q_1a_1b_1\alpha_0$ coincide. Fix a prime $p$ sufficiently large with respect to $t_1,t_2$. By part~\ref{point:Da} of  \Cref{lemma:witcasea,lemma:witcaseb} the class of $\alpha_2$ in $\mathbb F_p$ is either $-t_i(t_i+1)$ (if we are in case~\ref{case:conssqrintro} of  \Cref{athm:cosq}) or $-t_i^2$ (if we are in case~\ref{case:consprodintro}).
  \begin{itemize}
  \item If we are in case~\ref{case:conssqrintro} for both $i=1$ and $i=2$ then we get $t_1(t_1+1)=t_2(t_2+1)$. It follows that either $t_1=t_2$, hence we are in case~\ref{case:really3} of the conclusion, or $t_1=-t_2-1$, hence we are in case~\ref{case:cross}.

  \item If we are in case~\ref{case:consprodintro} for both $i=1$ and $i=2$ then we get $t_1^2=t_2^2$. It follows that either $t_1=t_2$, hence we are in case~\ref{case:really3} of the conclusion, or $t_1=-t_2$, hence we are in case~\ref{case:cross}.
   \item In the remaining cases we get $t_1(t_1+1)=t_2^2$ or $t_1^2=t_2(t_2+1)$, hence $t_1=0=t_2$. As all $q_i,a_i,b_i$ are nonzero, this implies that some $n_i$ or some $m_i$ is zero, against the assumptions\footnote{Note that, at any rate, this would bring us in case~\ref{case:0mixed} of the conclusion.}.\qedhere
  \end{itemize}
\end{proof}

We now deal with the case where $m_1n_1m_2n_2= 0$. The argument will be split into various subcases.

\begin{pr}\label{pr:nm0}
If the pattern~\eqref{eq:24} is PR and $n_1m_1=0$, then $n_2m_2=0$.
\end{pr}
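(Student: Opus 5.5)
We argue by contradiction: suppose that~\eqref{eq:24} is PR, that $n_1m_1=0$, but that $n_2m_2\ne 0$. Let $\alpha_0,\beta_0\in\star\mathbb N\setminus\mathbb N$ be witnesses and set $Q\coloneqq q_1a_1b_1=q_2a_2b_2$, which is legitimate by \Cref{lemma:qabqab}. The idea is to play the two pieces against each other. The second piece pins down the classes of $\alpha_0,\beta_0$ in every $\mathbb Z_p$ and forces a strong valuation constraint on the ``remainders''. The first piece, which is linear in one variable, turns out to be incompatible with that constraint.

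\textbf{Step 1: use the second piece.} Apply \Cref{athm:cosq} to $x,y,q_2(a_2x+n_2)(b_2y+m_2)$ to get $t\in\mathbb Z$ in case~\ref{case:conssqr} or~\ref{case:consprod}. Since $n_2,m_2\ne0$ we have $t\ne 0,-1$ in case~\ref{case:conssqr}, and $t\ne 0,\pm1$ in case~\ref{case:consprod}. By \Cref{lemma:witcasea} (resp.\ \Cref{lemma:witcaseb}), points (C) and (D), there is $d\in\mathbb Z$, namely
\[
d=-t(t+1)/Q\quad\text{(resp.\ } d=-t^2/Q\text{)},
\]
such that $\gamma\coloneqq\alpha_0-d$ and $\delta\coloneqq\beta_0-d$ are divisible by every natural number. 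Moreover $d\ne0$. Rewriting point~\ref{point:caseareductionii} (resp.\ \ref{point:casebreductionii}) in terms of $Q\gamma=\alpha_1$ and $Q\delta=\beta_1$, we get
\[
Q\gamma\sim Q\delta\sim Q^2\gamma\delta+(t+1)Q\delta-tQ\gamma
\]
(resp.\ with coefficients $t$ and $-t$). Now fix a large prime $p$ and apply \Cref{lemma:smodcases} with $\theta=Q^2\gamma\delta$.
\begin{itemize}
\item If $v_p(\gamma)<v_p(\delta)$, then $-t\in\{0,1\}$ (resp.\ $-t\in\{0,1\}$), which is excluded.
\item If $v_p(\delta)<v_p(\gamma)$, then $t+1\in\{0,1\}$ (resp.\ $t\in\{0,1\}$), which is also excluded.
\end{itemize}
Hence $v_p(\gamma)=v_p(\delta)$.

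\textbf{Step 2: use the first piece.} Let $\rho=d$ be the common class of $\alpha_0,\beta_0$ in $\mathbb Z_p$. Suppose first $n_1=0$. Then $\rho=q_1a_1\rho(b_1\rho+m_1)$, and since $d\ne0$ this gives $q_1a_1(b_1d+m_1)=1$. Substituting $\alpha_0=d+\gamma$ and $\beta_0=d+\delta$ into $\alpha_0\sim q_1a_1\alpha_0(b_1\beta_0+m_1)$ and expanding yields
\[
\gamma\sim\gamma+Qd\,\delta+Q\gamma\delta .
\]
Since $v(\gamma)=v(\delta)$, the term $Q\gamma\delta$ has strictly larger valuation. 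Applying $\smod_p$ then gives $s=(1+Qd)s$, so $Qd\equiv0\pmod p$, hence $d=0$ because $p$ is large. This is a contradiction. The case $m_1=0$ is symmetric, with $\alpha_0\sim q_1b_1\beta_0(a_1\alpha_0+n_1)$, and gives the same relation $\gamma\sim\delta+Qd\gamma+Q\gamma\delta$ up to swapping roles. If $n_1=m_1=0$, then $\rho=Q\rho^2$ gives $Qd=1$, and the same expansion gives $\gamma\sim\gamma+\delta+Q\gamma\delta$, whence $s=2s$, again a contradiction.

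\textbf{Main obstacle.} The delicate point is Step 1, namely excluding unequal valuations of $\gamma$ and $\delta$. This needs care with the excluded values of $t$, and with the fact that in case~\ref{case:consprod} the leading digits may cancel when the valuations are equal. That cancellation is harmless, since we only use that the valuations coincide. One should also double-check the expansion in Step 2, in particular that $Q=q_1a_1b_1$ is exactly the coefficient of $d\delta$.
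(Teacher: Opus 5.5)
Your proof is correct, but it takes a genuinely different route from the paper's. After reducing to $n_1=0$, the paper fixes one large prime and shows that the residue $r$ of $\alpha,\beta$ modulo $p$ is nonzero. It then recentres using the \emph{first} (degenerate) piece, $\gamma=q_1a_1b_1\alpha-1+q_1a_1m_1$, which yields the four-term relation~\eqref{eq:27}. Each of the three valuation cases is then eliminated by \Cref{lemma:smodcases}, combined with arithmetic of the parameter $t$ from \Cref{athm:cosq}: $-1$ is neither a square nor a product of consecutive integers, and $2,3$ are not sums of consecutive squares.

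You instead recentre using the \emph{second} piece. Points (C) and (D) of \Cref{lemma:witcasea,lemma:witcaseb} give you the exact $p$-adic centre $d$, with $\alpha_0-d$ divisible by every natural number. Once you show $v_p(\gamma)=v_p(\delta)$, the degenerate first piece collapses to $\gamma\sim\gamma+Qd\delta+Q\gamma\delta$ (or its mirror image), and a single $\smod_p$ comparison forces $Qd=0$.

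Your route has less case analysis, treats $n_1=0$ and $m_1=0$ uniformly, and shows clearly why a degenerate piece cannot coexist with a nondegenerate one. The price is that it relies on the full strength of \Cref{lemma:witcaseb}, including the delicate $p$-adic Claim in its proof. The paper's argument needs only \Cref{athm:cosq} and computations modulo one large prime.

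One detail to add in Step 2: the identity $\smod_p(\gamma+Qd\delta+Q\gamma\delta)=(1+Qd)s$ requires $1+Qd\not\equiv 0\pmod p$. This does hold, because $Qd$ equals $-t(t+1)$ or $-t^2$ with the excluded values of $t$, so $Qd\le -2$. Alternatively, apply \Cref{lemma:smodcases} with $h=1$, $\theta=Q\gamma\delta$, $k=1$, $\ell=Qd\in\mathbb Z$; this gives $Qd\in\{0,-1\}$, and both are impossible. Here $\theta\in\star\mathbb N$ because $\gamma$ is divisible by every natural number. The same remark applies verbatim to the mirror case $m_1=0$.
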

\begin{proof}
  We may assume without loss of generality that $n_1=0$. Towards a contradiction, assume that $n_2m_2\ne 0$. Let  $t\in \mathbb Z$ be given by applying \Cref{athm:cosq} to the configuration obtained by ignoring the third piece.
  
  Fix a sufficiently large prime $p$ and let $r$ be the common class of $\alpha,\beta$ in $\mathbb F_p$. It follows from~\eqref{eq:24} that
  \begin{equation}
    \label{eq:25}
    r=q_1a_1r(b_1r+m_1)=q_2(a_2r+n_2)(b_2r+m_2).
  \end{equation}
  If $r=0$, then $q_2n_2m_2=0$, a contradiction. Therefore $r\ne 0$ and in particular
  \begin{equation}
    \label{eq:28}
    b_2r+m_2\ne 0.
  \end{equation}
By dividing the first equality in~\eqref{eq:25} by $r$, we then find that $1=q_1a_1b_1r+q_1a_1m_1$, hence that
\begin{equation}
  \label{eq:29}
  q_1a_1m_1\ne 1.
\end{equation}
  Let
  \[
    \gamma\coloneqq q_1a_1b_1\alpha-1+q_1a_1m_1,\qquad \delta\coloneqq q_1a_1b_1\beta-1+q_1a_1m_1.
  \]
As $v(\gamma)$ is positive by \Cref{lemma:fpotmt}, so is $v(\delta)\sim v(\gamma)$. We obtain from \eqref{eq:24} that\footnote{The reader may want to recall \Cref{rem:cofpmt}, especially in calculating the fourth piece of~\eqref{eq:27}.}
  \begin{equation}
    \label{eq:27}
    \gamma\sim \delta\sim \gamma\delta+\gamma+(1-q_1a_1m_1)\delta\sim \gamma\delta+(1-q_1a_1m_1+q_2a_2m_2)\gamma+(1-q_1a_1m_1+q_2b_2n_2)\delta.
  \end{equation}
  We have three cases, and in each we apply \Cref{lemma:smodcases}.

  \begin{itemize}
  \item   Assume $v(\delta)<v(\gamma)$. Applying \Cref{lemma:smodcases} to the second and third piece in~\eqref{eq:27}, we obtain that either $1=q_1a_1m_1$, contradicting~\eqref{eq:29}, or $1=1-q_1a_1m_1$, which implies $m_1=0$. By the same lemma applied to the second and fourth piece of \eqref{eq:27}, our assumptions give that either $1+q_2b_2n_2=0$, or $1=1+q_2b_2n_2$. In the latter case, we obtain the contradiction $q_2b_2n_2=0$. In the former, $q_2b_2n_2=-1$. By writing $q_2b_2n_2$ in terms of $t$, we get a contradiction since $-1$ is neither a square nor a product of two consecutive integers.

  \item   Assume $v(\gamma)<v(\delta)$. Comparing the first and fourth piece in~\eqref{eq:27} yields that either $1-q_1a_1m_1+q_2a_2m_2=0$, or $q_1a_1m_1=q_2a_2m_2$. In the second case, plugging the equality into~\eqref{eq:25} gives us $0=q_2b_2n_2r+q_2n_2m_2$, hence since $q_2n_2\ne 0$ we get $b_2r+m_2=0$, against~\eqref{eq:28}. The same contradiction can be obtained in the first case, by expanding the first equality in~\eqref{eq:25} and then substituting $q_2a_2b_2$ for $q_1a_1b_1$ and $1+q_2a_2m_2$ for $q_1a_1m_1$.

\item   Finally, assume $v(\gamma)=v(\delta)$. Comparing the first and third piece gives that  $q_1a_1m_1$ equals either $2$ or $1$, the latter being excluded by~\eqref{eq:29}. Similarly, comparing $\smod$ in the second and fourth piece and using that $q_1a_1m_1=2$ gives us that  $q_2a_2m_2+q_2b_2n_2$ equals either $2$ or $3$. Now observe that $q_2a_2m_2+q_2b_2n_2$ equals either $t^2+(t+1)^2$, or $2t^2$, depending on whether we are in case~\ref{case:conssqrintro} or in case~\ref{case:consprodintro} of \Cref{athm:cosq}. As neither $2$ nor $3$ is a sum of two consecutive squares, and as $2t^2=3$ has no solutions in $\mathbb Z$, it only remains to consider the case where $q_2a_2m_2=t(t-1)$, $q_2b_2n_2=t(t+1)$ and $t^2=1$. This gives either $m_2=0$ or $n_2=0$, and we are done.\qedhere
\end{itemize}
\end{proof}

In the case where $n_1=0=m_1$ we can fully characterise the PR patterns: they are either the $3$-piece pattern from \Cref{rem:qxy} or rescalings of the $4$-piece pattern in \Cref{co:goswami}.

 \begin{pr}\label{co:qxy}
   For $i\in \set{1,2}$, let $q_i\in \mathbb Q_{>0}$ and $a_i,b_i\in \mathbb N$. Let $n_2,m_2\in \mathbb Z$. The pattern
   \[
     x,\quad y,\quad q_1a_1b_1xy,\quad  q_2(a_2x+n_2)(b_2y+m_2)
   \]
   is PR if and only if $q_1a_1b_1=q_2a_2b_2$ and either
   \begin{enumerate}
   \item $n_2=m_2=0$, or 
   \item $n_2=0$ and $q_2a_2m_2=1$, or
   \item $m_2=0$ and $q_2b_2n_2=1$.
   \end{enumerate}
 \end{pr}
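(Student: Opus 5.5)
Write $Q\coloneqq q_1a_1b_1$. The plan is to prove the two directions separately.

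\emph{Sufficiency.} \Cref{lemma:qabqab} gives $Q=q_2a_2b_2$ as a standing condition.

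In case~1 the fourth piece equals the third, so the pattern is $x,y,Qxy$. This is PR by \Cref{rem:qxy}.

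In case~2 the fourth piece is $q_2a_2x(b_2y+m_2)=Qxy+q_2a_2m_2x=Qxy+x$. Hence the pattern is $x,y,Qxy,Qxy+x$. After the substitution $x=a$, $y=b/(Qa)$ this reads $a,\ b/(Qa),\ b,\ a+b$. So it suffices to find a monochromatic configuration $a,b,a+b,Q^{-1}b/a$ with $b/(Qa)\in\mathbb N$. \Cref{co:fsratiosrescaled} with $q=Q^{-1}$ (taking $F<G$ singletons and $F\cup G$) provides exactly this, with infinitely many solutions. Case~3 follows by the symmetric substitution, or by swapping the roles of $x$ and $y$.

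\emph{Necessity.} Assume PR with witnesses $\alpha,\beta$. By \Cref{lemma:qabqab}, $Q=q_2a_2b_2$. By \Cref{pr:nm0} (applied with $n_1=0$), $n_2m_2=0$. By symmetry assume $n_2=0$, and also $m_2\neq0$, since otherwise we are in case~1. Set $c\coloneqq q_2a_2m_2\in\mathbb Q\setminus\{0\}$. The pattern is now
\[
\alpha\sim\beta\sim Q\alpha\beta\sim Q\alpha\beta+c\alpha,
\]
and we must show $c=1$.

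Fix a sufficiently large prime $p$ and write $v=v_p$. First, $v(\alpha)\sim v(Q\alpha\beta)=v(\alpha)+v(\beta)$, so $v(\beta)=0$ if $v(\alpha)$ is finite. If instead $v(\alpha)$ is infinite, then $v(\beta)$ is infinite too, and the map $x\mapsto x+v(\beta)$ would send $v(\alpha)$ to an equivalent number. That forces $v(\beta)=0$, a contradiction with $v(\beta)$ infinite. In either case we are left with $v(\alpha)=v(\beta)=0$.

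Let $r\neq0$ be the common residue of $\alpha,\beta$ in $\mathbb F_p$. Reducing $r=Qr^2$ gives $Qr=1$. Reducing $r=Qr^2+cr$ then gives $1=1+c$, so $c\equiv0\pmod p$ and hence $c=0$. This is a contradiction, so the necessity direction appears to follow from this mod-$p$ computation alone. This seems too strong, since it would also rule out case~2. The error is that the third piece was already used, so $r=Qr^2$ is the only constraint from it; the fourth piece gives $Qr^2+cr=r$, i.e.\ $c=0$, which does contradict the goal. So something must be off in treating $v(\alpha)=0$ as forced.

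Indeed, $v(\alpha)$ finite does not mean $v(\alpha)=0$. The correct conclusion from the first computation is only $v(\beta)=0$. We therefore split into two cases.
\begin{itemize}
\item If $v(\alpha)=0$, the residue computation above yields the contradiction $c=0$.
\item Otherwise $v(\alpha)>0$ and $v(\beta)=0$, which contradicts $\alpha\sim\beta$ because $v$ is finite-valued and equivalent elements have equal finite valuation.
\end{itemize}
Since both cases give a contradiction, the necessity direction cannot be right as set up. The real point is that $v(\alpha)$ must be infinite; then $v(Q\alpha\beta)$ infinite forces $v(\beta)$ infinite, with no contradiction arising.

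So the argument should proceed in $\mathbb Z_p$ rather than $\mathbb F_p$. Let $\rho$ be the common class in $\mathbb Z_p$ of the four pieces. Then $\rho=Q\rho^2$, and $\rho=Q\rho^2+c\rho$ forces $c\rho=0$, so $\rho=0$ for every $p$. Consequently $\alpha,\beta$ are divisible by every natural number.

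The main step, which I expect to be the obstacle, is to compare the least significant digits, using $\smod_p$ and the valuations. We have $v(Q\alpha\beta)=v(\alpha)+v(\beta)>v(c\alpha)=v(\alpha)$. By \Cref{lemma:smodcases} (with $\theta=Q\alpha\beta$, $h=1$, $k=c$ scaled to an integer, $\zeta=\alpha$), applying $\smod$ to $\beta\sim Q\alpha\beta+c\alpha$ gives $\smod(\beta)=\smod(c)\smod(\alpha)$. Hence $c\equiv1\pmod p$ for all large $p$, and therefore $c=1$.
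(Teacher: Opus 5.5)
Your final argument, from the passage to $\mathbb Z_p$ onwards, is correct. The exploration before it, however, contains a false step, misdiagnoses it, and should be deleted.

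\textbf{The false step.} It is the claim that the map $x\mapsto x+v(\beta)$ would send $v(\alpha)$ to an equivalent number, forcing $v(\beta)=0$. Fact~\ref{pr:PropSim} only applies to extensions of standard functions, and translation by the infinite number $v(\beta)$ is not one of them. So $v(\alpha)\sim v(\alpha)+v(\beta)$ is no contradiction. In Proposition~\ref{pr:lhdn} the analogous trick works only because the shift $v(a)+v(m)$ is standard.

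\textbf{Consequences for the rest of your exploration.}
\begin{itemize}
\item Your residue computation was not ``too strong''. It correctly proves that $v(\alpha)=0$ is impossible. Witnesses for case~2 have $v_p(\alpha)$ infinite, so they are not ruled out.
\item Your next remark is also wrong: $v(\alpha)$ finite does force $v(\alpha)=0$, since $v(\alpha)\sim v(\beta)$ and a standard number is equivalent only to itself.
\item The detour through $\mathbb Z_p$ is unnecessary. For large $p$, the relation $r=Qr^2=Qr^2+cr$ in $\mathbb F_p$ gives $cr=0$, hence $r=0$ and $v(\alpha),v(\beta)\ge 1$. That is all your last step uses.
\end{itemize}
The necessity proof is then three lines:
\begin{enumerate}
\item $r=0$.
\item Hence $v(Q\alpha\beta)=v(\alpha)+v(\beta)>v(c\alpha)$.
\item Applying $\smod$ to $\alpha\sim Q\alpha\beta+c\alpha$ gives $\smod(c)=1$ for all large $p$, so $c=1$.
\end{enumerate}

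\textbf{Comparison with the paper.} Your sufficiency argument is the paper's, since the right-to-left direction of Theorem~\ref{thm:qcd} is just Corollary~\ref{co:fsratiosrescaled}. So is your initial reduction to $n_2=0\ne m_2$ via Lemma~\ref{lemma:qabqab} and Proposition~\ref{pr:nm0}.

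For necessity the paper takes a different route. It sets $\alpha'\coloneqq\alpha$ and $\beta'\coloneqq Q\alpha\beta$, which turns the pattern into $x,\ Q^{-1}y/x,\ y,\ y+cx$, and then cites Theorem~\ref{thm:qcd}. Your direct argument is in effect the proof of Theorem~\ref{thm:qcd} specialised to this situation:
\begin{itemize}
\item its subcase $v(\beta')>v(\alpha')$ is exactly your $\smod$ comparison;
\item its subcase $v(\beta')=v(\alpha')$, i.e.\ $v(\beta)=0$, is killed in the paper by deducing $c=-1$ and reaching a valuation contradiction, whereas you dispose of it with the residue computation.
\end{itemize}
The paper's route is shorter once Theorem~\ref{thm:qcd} is available, and it gives both directions at once. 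Yours is self-contained and applies verbatim to the rational coefficient $c=q_2a_2m_2$. Theorem~\ref{thm:qcd} is stated only for $c\in\mathbb Z\setminus\set{0}$, so the paper tacitly relies on the easy extension of that theorem to rational $c$.
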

 \begin{proof}
 By \Cref{pr:ClassiArchimedee}, if the pattern is PR then  $q_1a_1b_1=q_2a_2b_2$, therefore we assume this in the rest of the proof. Let $q\coloneqq q_1a_1b_1=q_2a_2b_2$.

 By \Cref{pr:nm0}, if the pattern is PR then at least one of $n_2,m_2$ has to be zero.  If $n_2=m_2=0$, then the pattern is PR by \Cref{rem:qxy}.

 Let us deal with the case $n_2=0$, $m_2\ne 0$, the case $m_2=0$, $n_2\ne 0$, being symmetrical.   Note that, if $\alpha\sim\beta\sim q\alpha\beta\sim q \alpha\beta+q_2a_2m_2\alpha$ and $\alpha,\beta$ belong to $\star \mathbb N$, then so do the other items above. Therefore, $\alpha'\coloneqq \alpha$ and $\beta'\coloneqq q\alpha\beta$ belong to $\star \mathbb N$ and satisfy
 \begin{equation}
   \label{eq:30}
   \alpha'\sim\beta'/(q\alpha')\sim \beta'\sim \beta'+q_2a_2m_2\alpha'.
 \end{equation}
This is PR if and only if $q_2a_2m_2=1$ by \Cref{thm:qcd}. Conversely, if $q_2a_2m_2=1$ and $\alpha',\beta'\in \star \mathbb N$ satisfy~\eqref{eq:30} then we conclude by setting $\alpha\coloneqq\alpha'$ and $\beta\coloneqq \beta'/q\alpha'$, and observing that the latter lies in $\star \mathbb N$ since it is equivalent to $\alpha'$.
 \end{proof}

 \begin{eg} By \Cref{co:qxy}, we have that the configuration $2x,2y,3xy,x(3y+2)$ is PR, whilst $2x,2y,3xy,x(3y-2)$ is not.
 \end{eg}

 When $m_1\ne 0$ we are left with the cases $n_2=0\ne m_2$ and $m_2=0\ne n_2$. It turns out that in the first case there are no PR 4-piece patterns, and that PR in the second case implies symmetry in $x,y$.

 \begin{pr}\label{thm:mixed2to1}
 Let $m_1m_2\ne 0$.  If the pattern
   \[
     x,\quad  y,\quad q_1a_1 x(b_1y+m_1),\quad q_2a_2x(b_2y+m_2)
   \]
   is PR then $q_1a_1 x(b_1y+m_1)=q_2a_2x(b_2y+m_2)$.
 \end{pr}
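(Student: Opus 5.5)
The plan is to reduce everything to a comparison of least significant digits modulo a large prime, in the spirit of the proof of \Cref{thm:qcd}. Suppose the pattern is PR and let $\alpha,\beta\in\star\mathbb N\setminus\mathbb N$ be witnesses, so that
\[
\alpha\sim\beta\sim q_1a_1\alpha(b_1\beta+m_1)\sim q_2a_2\alpha(b_2\beta+m_2).
\]
By \Cref{lemma:qabqab} we have $q\coloneqq q_1a_1b_1=q_2a_2b_2$. Writing $c_i\coloneqq q_ia_im_i\in\mathbb Q\setminus\set0$, the two last pieces become $q\alpha\beta+c_1\alpha$ and $q\alpha\beta+c_2\alpha$. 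It therefore suffices to show $c_1=c_2$, since together with $q_1a_1b_1=q_2a_2b_2$ this gives the required equality of polynomials. Towards a contradiction, assume $c_1\ne c_2$. Clear denominators by choosing $h\in\mathbb N$ with $hq,hc_1,hc_2\in\mathbb Z$. Then fix a prime $p$ sufficiently large with respect to $h$, $hq$, $hc_1$ and $hc_2$; in particular all of these, and $h(c_1-c_2)$, are prime to $p$.

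First step: show that $v(\alpha),v(\beta)>0$. Let $r$ be the common class of $\alpha$ and $\beta$ in $\mathbb F_p$ (\Cref{rem:smoddet}). Reducing the equivalences modulo $p$ gives
\[
hr=hq\,r^2+hc_1r=hq\,r^2+hc_2r,
\]
so $h(c_1-c_2)r=0$ and hence $r=0$. Thus $p$ divides $\alpha$ and $\beta$.

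Second step: compare $\smod$ on both sides. Multiply the pattern by $h$ to get $h\alpha\sim h\beta\sim hq\alpha\beta+hc_1\alpha\sim hq\alpha\beta+hc_2\alpha$. Set $\theta\coloneqq hq\alpha\beta$. Then
\[
v(\theta)=v(\alpha)+v(\beta)>v(\alpha),
\]
because $v(\beta)>0$. Apply \Cref{lemma:smodcases} with $\eta=\zeta=\xi=\alpha$, $k=hc_i$ and $\ell=0$; alternatively, argue directly. The least significant nonzero base-$p$ digit of $\theta+hc_i\alpha$ is that of $hc_i\alpha$, so
\[
\smod(h)\smod(\alpha)=\smod(hc_i)\smod(\alpha).
\]
Since $p$ is large and $hc_i\neq 0$, this forces $hc_i=h$, that is $c_i=1$, for both $i=1,2$. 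This contradicts $c_1\ne c_2$.

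The only delicate point is the first step. There one has to note that $c_1\ne c_2$ is exactly what rules out a nonzero residue $r$, because the two quadratic relations share the same leading coefficient $q$. One also has to make sure the rational coefficients are handled by clearing denominators before reducing mod $p$. The rest is a direct application of \Cref{lemma:smodcases}. Note also that the argument shows more: any PR pattern of this shape must have $q_ia_im_i=1$. This is consistent with \Cref{co:qxy}.
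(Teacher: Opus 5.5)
Your proof is correct and is essentially the paper's argument. Reducing modulo a large prime forces $r=0$ unless $q_1a_1m_1=q_2a_2m_2$, and a $\smod$ comparison then handles $r=0$; the only difference is that you compare each product with the first piece to get $q_ia_im_i=1$, while the paper compares the two products directly to get $q_1a_1m_1s=q_2a_2m_2s$.

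Drop your closing remark, however. You derived $q_ia_im_i=1$ only under $r=0$, and when $q_1a_1m_1=q_2a_2m_2=c$ the common residue $r=(1-c)/q_1a_1b_1$ can be nonzero, so nothing forces $c=1$. For instance, $x,y,x(y+2)$ has $r=-1$, and its partition regularity is left open in \Cref{q:3pc}.
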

 \begin{proof}
 As usual let $\alpha,\beta$ be witnesses of partition regularity and  observe that by \Cref{lemma:qabqab} $q_1a_1b_1=q_2a_2b_2$. The common class $r$ of $\alpha,\beta$ modulo a sufficiently large prime $p$ satisfies $r= q_1a_1b_1r^2+q_1a_1m_1r=q_1a_1b_1r^2+q_2a_2m_2r$, whence $q_1a_1m_1 r=q_2a_2m_2 r$. If $r\ne 0$, as $p$ is large, we obtain  $q_1a_1m_1=q_2a_2m_2$ and, as $q_1a_1b_1=q_2a_2b_2$, we obtain the conclusion.

 If instead $r=0$, then $v(\alpha),v(\beta)$ are positive. If $s\coloneqq\smod(\alpha)=\smod(\beta)$ we obtain $q_1a_1m_1s=q_2a_2m_2s$ and, again because $p$ is large, this means $q_1a_1m_1=q_2a_2m_2$, so we conclude as above.
 \end{proof}

 \begin{eg} \label{eg:nobrauer}
The configuration $x,y,x(y+1),x(y+2)$ is not PR. 
 \end{eg}

 \begin{rem}
   Recall that \emph{Brauer's Theorem} is the strengthening of Van der Waerden's Theorem asserting that in every colouring of the natural numbers one may find a colour containing arbitrarily long arithmetic progressions together with their common differences.

   It follows from \Cref{eg:nobrauer} that one may not add quotients to the configuration in Brauer's Theorem, in fact not even to the length 3 case. Indeed, the configuration $x,y,x+y,2x+y, y/x$ is not PR, since the change of variables $z\coloneqq y/x$ turns it into  $x, zx, x(z+1),x(z+2), z$, and $x,y,x+y,x+2y, y/x$ is not PR because of \Cref{pr:ClassiArchimedee}.
 \end{rem}

 \begin{pr}
   If $m_1n_2\ne 0$ and the pattern
   \[
     x,\quad y, \quad q_1a_1x(b_1y+m_1),\quad q_2(a_2x+n_2)b_2y
   \]
   is PR, then $q_1a_1m_1=q_2b_2n_2$.
 \end{pr}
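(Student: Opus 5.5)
Write $q\coloneqq q_1a_1b_1$, $A\coloneqq q_1a_1m_1$ and $B\coloneqq q_2b_2n_2$. The plan mirrors the proof of \Cref{thm:mixed2to1}: one case analysis modulo a large prime, according to whether the common residue of the witnesses is zero. By \Cref{lemma:qabqab} we have $q_2a_2b_2=q$ as well, so the pattern reads
\[
x,\quad y,\quad qxy+Ax,\quad qxy+By .
\]
Let $\alpha,\beta\in\star\mathbb N\setminus\mathbb N$ be witnesses, i.e.\ $\alpha\sim\beta\sim q\alpha\beta+A\alpha\sim q\alpha\beta+B\beta$. The goal is $A=B$, which is exactly the claimed $q_1a_1m_1=q_2b_2n_2$. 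Note that $A,B\neq 0$ because $m_1n_2\neq 0$.

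Fix a prime $p$ sufficiently large with respect to the heights of $q,A,B$, and let $r$ be the common class of $\alpha,\beta$ in $\mathbb F_p$. The equivalences give $r=qr^2+Ar=qr^2+Br$, hence $(A-B)r=0$. If $r\neq 0$ this yields $A\equiv B\pmod p$. Since $p$ is large with respect to the heights of $A$ and $B$, we get $A=B$, and we are done.

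If instead $r=0$, then $v(\alpha),v(\beta)>0$, where $v=v_p$. We now compare least significant digits, as in \Cref{lemma:smodcases} with $\theta\coloneqq q\alpha\beta$. Since $v(q)=0$ we have $v(\theta)=v(\alpha)+v(\beta)$, which exceeds both $v(\alpha)$ and $v(\beta)$. Let $s\coloneqq\smod(\alpha)=\smod(\beta)$.
\begin{itemize}
\item From $\alpha\sim\theta+A\alpha$ we get $s=\smod(\theta+A\alpha)=As$, hence $A\equiv1\pmod p$. Here $A\alpha$ is the lowest-valuation term because $A\not\equiv0$.
\item The same reasoning applied to $\beta\sim\theta+B\beta$ gives $B\equiv 1\pmod p$.
\end{itemize}
As $p$ is large, $A=B=1$, and in particular $A=B$.

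I do not expect a genuine obstacle. The only points needing care are to interpret the rationals $A,B,q$ modulo $p$ as in the conventions on residues of rationals (which is legitimate since $p$ exceeds their heights), and to check that $v(\theta)>v(A\alpha)$, which holds precisely because $r=0$ forces $v(\beta)>0$.
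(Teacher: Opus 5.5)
Your proof is correct and follows the paper's argument: you use $q_1a_1b_1=q_2a_2b_2$, reduce modulo a large prime to get $(A-B)r=0$, and in the case $r=0$ compare least significant nonzero digits via $\smod$. The only cosmetic difference is that in the case $r=0$ you compare each of the last two pieces with $x$ and $y$ separately, obtaining the slightly stronger $A=B=1$. The paper instead compares the third and fourth pieces directly to get $As=Bs$.
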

 \begin{proof}
   Again, by \Cref{pr:ClassiArchimedee} partition regularity implies $q_1a_1b_1=q_2a_2b_2$.
   Let $\alpha,\beta$ be nonstandard witnesses of partition regularity and $r$ their common class modulo a sufficiently large prime $p$.
     We have $r=q_1a_1b_1r^2+q_1a_1m_1r=q_1a_1b_1r^2+q_2b_2n_2r$, 
   whence $q_1a_1m_1r=q_2b_2n_2r$. 
   If $r\ne 0$ we immediately get $q_1a_1m_1=q_2b_2n_2$.    If $r=0$, then $v(\alpha),v(\beta)$ are positive. As $p$ is large if $s\coloneqq \smod(\alpha)$, we obtain $q_1a_1m_1s=q_2n_2b_2s$, hence the conclusion.
 \end{proof}
 \begin{eg}
   The configuration $x,y,x(y+1),(x+2)y$ is not PR.
 \end{eg}
This was the last case to consider in the proof of \Cref{athm:4pieces}, and \Cref{aco:5pezzi} follows easily.
 \begin{proof}[Proof of \Cref{athm:4pieces}]
   By combining the previous results in this section.
 \end{proof}
 \begin{proof}[Proof of \Cref{aco:5pezzi}]
    Any PR configuration with 5 or more pieces induces various PR 4-piece configurations. By inspecting the conclusion of \Cref{athm:4pieces}, we see that the only compatible 4-piece configurations are those in case~\ref{case:0mixed}, with the same $q$, from which the first part of the conclusion follows. For the ``moreover part'', if $p$ is an arbitrary prime and $\alpha,\beta$ witness partition regularity of the pattern, then their class $\rho$  in $\mathbb Z_p$ satisfies $q\rho^2+\rho=q\rho^2$. Therefore, $\alpha$ and $\beta$ are divisible by every natural number, and by setting $\alpha'\coloneqq q\alpha$ and $\beta'\coloneqq q\beta$ we obtain witnesses of partition regularity of $x,y,xy,xy+x,xy+y$. The converse is analogous.
\end{proof}
Finally, we prove \Cref{athm:div}.

\begin{proof}[Proof of \Cref{athm:div}]
  By \Cref{lemma:witcasea,lemma:witcaseb} and \Cref{rem:symchange}, it suffices to show that we are in the assumptions of said lemmas. If $nm\ne 0$, this is \Cref{athm:cosq}.  If $nm=0$, since by assumption the third and fourth piece of the pattern differ, precisely one of $n,m$ is null, without loss of generality $n=0\ne m$. To conclude, we prove that $qam$ equals either $1$ or $2$, so we are in the assumptions of \Cref{lemma:witcasea} with $t=-1$ or of  \Cref{lemma:witcaseb} with $t=1$ respectively.

  Let $\alpha,\beta\in\star \mathbb N$ witness partition regularity. We move to integer parameters, by writing $q\cdot a\cdot(b,m)=A/h$ for suitable coprime integers $A,h$ and setting $B\coloneqq b/(b,m)$ and $M\coloneqq m/(b,m)$. By multiplying by $h$ we obtain
   \begin{equation*}
     h \alpha\sim h\beta\sim  A \alpha(B\beta+M)\sim  A (B\alpha+M)\beta
   \end{equation*}
   where $(h, A)=1=(B,M)$. After this renaming, the conclusion amounts to showing that $h=AM$ or $2h=AM$.

   Fix a sufficiently large prime $p$.  If $v(\alpha),v(\beta)>0$, by \Cref{lemma:smodcases} we obtain immediately $h= A M$.   If instead $v(\alpha)=0=v(\beta)$, set
   \[
     \gamma\coloneqq  A B\alpha+ A M-h\quad \text{ and } \quad    \delta\coloneqq  A B\beta+ A M-h.
   \]
Similarly as previously done in this paper, we obtain
   \begin{equation*}
     h\gamma\sim h\delta\sim \gamma\delta+h\gamma+(h- A M)\delta\sim  \gamma\delta+(h- A M)\gamma+h\delta
   \end{equation*}
 and $v(\gamma),v(\delta)>0$.
 If $v(\gamma)>v(\delta)$, we apply \Cref{lemma:smodcases} to the second and third piece of the pattern. If $h=h- A M$ then $ A M=0$, a contradiction, therefore $h-AM=0$. If $v(\gamma)<v(\delta)$ the argument is analogous, but we apply \Cref{lemma:smodcases} to the first and fourth piece, so we are left with the case $v(\gamma)=v(\delta)>0$. Again by \Cref{lemma:smodcases}, applied to the first and third piece, this implies either $2h= A M$ or $h=2h-AM$, that is, $h=AM$.
\end{proof}

 \begin{eg}
   For all $n\in\Z\setminus\{0,1,2\}$ the configuration $x,y,x(y+n),(x+n)y$ is not PR.
 \end{eg}

\section{Open problems}\label{sec:froq}

It remains open to determine which patterns obeying the restrictions given by our main theorems are PR. For 3-piece patterns, this amounts to answering the following.

\begin{question}\label{q:3pc}\*
  \begin{enumerate}
    \item For which $t\in \mathbb Z$ is the equation $z=(x+t^2)(y+(t+1)^2)$ PR?
    \item For which $t\in \mathbb Z$ is the equation $z=(x+t(t-1))(y+t(t+1))$  PR?
    \item\label{case:nocanform} For which $q\in \mathbb Q_{>0}$, $b\in \mathbb N$ and $m\in \mathbb Z$ is the equation  $z=qx(by+m)$ PR?
\end{enumerate}
\end{question}

The abundance of parameters in \Cref{q:3pc}.\ref{case:nocanform} is due to the lack of a canonical form when $n=0$.

\begin{problem}\label{prob:canform}
 Find a canonical form, preserving (non-)partition regularity, for equations of the form $z=qx(by+m)$.
\end{problem}

As for patterns with more than 3 pieces, our work reduces the general problem to the following.
\begin{question}\label{q:4or5pc}\*
  \begin{enumerate}
      \item For which $t\in \mathbb Z$ is  $x,y,(x+t^2)(y+(t+1)^2),(x+(t+1)^2)(y+t^2)$  PR?
    \item For which $t\in \mathbb Z$ is  $x,y,(x+t(t-1))(y+t(t+1)),(x+t(t+1))(y+t(t-1))$  PR?
    \item Is $x,y,xy,xy+x,xy+y$ PR?
    \end{enumerate}
  \end{question}

As the pattern $x,y,xy,x(y+1)$ is PR, one wonders if this is the case when we replace $x+y$ for $xy$.

\begin{question}\label{q:sumshift}
  Is the pattern $x,y,x+y,x(y+1)$ PR?
\end{question}

Let us observe that 1 above cannot be replaced by other nonzero integers. 

\begin{rem}
If $x,y,x+y,x(y+m)$ is PR then  $m=1$. This can be easily shown by fixing a large prime $p$, observing that the class $r$ of $x,y$ in $\mathbb F_p$ must satisfy $r=r+r$, hence $r=0$, colouring with values of $\smod$ and observing that $1=\smod(y+m)=\smod(m)$.
\end{rem}

All of the questions above also have Ramsey versions, again in the sense of \cite{dinassoRamseysWitnesses2025}.

\begin{problem}
  Study the variants of \Cref{q:3pc,q:4or5pc,q:sumshift} given by replacing ``PR'' by ``Ramsey PR''.
\end{problem}

\section{Appendix: some standard proofs}
\label{sec:appendix}

In this section we give standard versions of some previously mentioned statements and of the proof \Cref{thm:qcd}. We begin with a counterpart to \Cref{thm:NScharPR}.
\begin{fact}\label{rem:StandcharPR}
The configuration $f_1(\bla x1,k),\ldots, f_n(\bla x1,k)$ is partition regular if and only if there is $w\in \beta \mathbb N^k\setminus \mathbb N^k$ such that $f_1(w)=\ldots=f_n(w)\in \beta \mathbb N\setminus \mathbb N$.
\end{fact}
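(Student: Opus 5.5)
The plan is to derive the statement from the finite-colouring definition of partition regularity in \Cref{notation}.\ref{point:nontrivialsol}, together with compactness of $\beta\mathbb N^k$. Write $f=(f_1,\ldots,f_n)$. For a set $A\subseteq\mathbb N$, let
\[
S_A\coloneqq\set{\bar x\in\mathbb N^k\mid f_1(\bar x),\ldots,f_n(\bar x)\in A}.
\]
The key observation is that partition regularity of the configuration says exactly the following: whenever $\mathbb N=\bla C1\cup r$, some $S_{C_i}$ yields infinitely many monochromatic tuples. Equivalently, the family of sets $\mathcal F_A\coloneqq f_1^{-1}(A)\cap\ldots\cap f_n^{-1}(A)$ is ``large'' in a partition sense.

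For right to left, let $u\coloneqq f_1(w)=\ldots=f_n(w)$ and take any colouring $\fcol$. Exactly one colour, say $C_m$, lies in $u$. Then $f_j^{-1}(C_m)\in w$ for every $j$, so $\mathcal F_{C_m}\in w$. Since $u$ is nonprincipal, for every finite $E\subseteq\mathbb N$ the set $C_m\setminus E$ is also in $u$, so $\mathcal F_{C_m\setminus E}\in w$. Hence for each finite set of values we can still find a tuple whose image is monochromatic and avoids those values. This produces infinitely many distinct monochromatic tuples $(f_1(\bar x),\ldots,f_n(\bar x))$. It also uses that $w$ is nonprincipal only mildly; the nonprincipality of $u$ is what matters.

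For left to right, consider the family
\[
\mathcal G\coloneqq\set{\textstyle\bigcup_{j\le n}f_j^{-1}(B)\cup\bigcup_{i<j}\set{\bar x\mid f_i(\bar x)\in B_{ij}, f_j(\bar x)\notin B_{ij}}\ldots}
\]
The cleaner route, which I would actually follow, runs through the nonstandard characterisation. By \Cref{thm:NScharPR} there are $\alpha_1,\ldots,\alpha_k$ with $f_1(\bar\alpha)\sim\ldots\sim f_n(\bar\alpha)\notin\mathbb N$. Let
\[
w\coloneqq\set{X\subseteq\mathbb N^k\mid\bar\alpha\in\star X}.
\]
This is an ultrafilter on $\mathbb N^k$, and its pushforward along $f_j$ is the ultrafilter generated by $f_j(\bar\alpha)$. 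All these pushforwards coincide by $u$-equivalence and are nonprincipal by \Cref{pr:PropSim}.\ref{point:propsim4}. Moreover, $w$ is nonprincipal, since otherwise $\bar\alpha$ would be standard and so would $f_1(\bar\alpha)$.

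Since the appendix aims for a standard argument, I would also give a direct compactness proof. For each partition $\mathcal P$ of $\mathbb N$ into finitely many pieces and each finite $E\subseteq\mathbb N$, let $K_{\mathcal P,E}$ be the set of $w\in\beta\mathbb N^k$ such that:
\begin{itemize}
\item some piece $C\in\mathcal P$ has $f_j^{-1}(C\setminus E)\in w$ for all $j$.
\end{itemize}
Each $K_{\mathcal P,E}$ is closed, being a finite union of clopen sets, and is nonempty by partition regularity. The family has the finite intersection property, because finitely many partitions have a common refinement and finitely many $E$'s have a common union. So the intersection is nonempty, and any $w$ in it works: applying the condition to two-piece partitions $\{A,\mathbb N\setminus A\}$ shows that all the $f_j(w)$ agree, and varying $E$ shows the common value is nonprincipal.

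The main obstacle is checking that the finite intersection property genuinely holds. For a common refinement $\mathcal R$ of $\mathcal P_1,\ldots,\mathcal P_s$ and $E=\bigcup_l E_l$, one must confirm that $K_{\mathcal R,E}\subseteq\bigcap_l K_{\mathcal P_l,E_l}$. This holds because each piece of $\mathcal R$ is contained in a piece of each $\mathcal P_l$. Nonemptiness of $K_{\mathcal R,E}$ then uses the infinitude clause of PR: infinitely many monochromatic tuples must include one whose values avoid the finite set $E$. This needs care when some $f_j$ take a value in $E$ infinitely often, so I would colour $E$ by singletons to force avoidance.
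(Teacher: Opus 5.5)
Your proof is correct. The paper gives no proof of this Fact: it presents it as the well-known ultrafilter translation of \Cref{thm:NScharPR}, which is itself only cited. That translation sends $\bar\alpha\in\star\mathbb N^k$ to $\{X\subseteq\mathbb N^k\mid \bar\alpha\in\star X\}$, and uses saturation to realise an arbitrary $w$ by some $\bar\alpha$ in the converse direction.

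Your nonstandard argument for left to right is exactly that translation. It therefore inherits the dependence on the cited characterisation, which is somewhat at odds with the purpose of an appendix meant to be standard.

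Your other two arguments are genuinely different and better suited to that purpose:
\begin{itemize}
\item The direct right-to-left argument (shrink the colour in $u$ by finite sets $E$, then iterate) avoids saturation entirely.
\item The compactness argument with the closed sets $K_{\mathcal P,E}$ makes the whole equivalence self-contained.
\end{itemize}
In the compactness argument, refining by the singletons of $E$ is exactly what turns the paper's ``infinitely many monochromatic value tuples'' clause into nonprincipality of the common pushforward. The argument goes through: a monochromatic tuple touching $E$ must be constant $(e,\ldots,e)$, so only finitely many tuples are lost. Your check that $K_{\mathcal R,E}\subseteq\bigcap_l K_{\mathcal P_l,E_l}$ is right.

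Two cosmetic points:
\begin{itemize}
\item Delete the unfinished definition of $\mathcal G$, which is ill-formed and unused.
\item In the compactness version, say explicitly that $w\notin\mathbb N^k$. This holds because the pushforward of a principal ultrafilter is principal, while $f_1(w)$ is not.
\end{itemize}
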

The following lemma is the translation in standard terms of the fact that equivalent nonstandard points must be at infinite distance. Let $\pi_1,\pi_2\from \mathbb N^2\to \mathbb N$ be the usual coordinate projections.
\begin{lemma}\label{lemma:mposv}
Let $w\in\beta\N^2$ be such that $\pi_1(w)=\pi_2(w)$,
and let $f:\N\to\N$. If $\{(x,y)\mid |f(x)-f(y)|\le \ell\}\in w$ for some $\ell\in\N$
then $\{(x,y)\mid f(x)=f(y)\}\in w$.
\end{lemma}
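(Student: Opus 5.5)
The plan is to argue by contradiction using the pushforward $u\coloneqq\pi_1(w)=\pi_2(w)$ and the ultrafilter $f(u)$. Set $A\coloneqq\{(x,y)\mid |f(x)-f(y)|\le \ell\}\in w$. Since $A$ is the finite union over $j\in\{-\ell,\ldots,\ell\}$ of $A_j\coloneqq\{(x,y)\mid f(y)=f(x)+j\}$, exactly one $A_j$ lies in $w$, and it suffices to show $j=0$. So suppose $A_j\in w$ with $j\neq 0$. Replacing $w$ by its image under the swap map $(x,y)\mapsto(y,x)$, which also has both projections equal to $u$, we may assume $j>0$.

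The key step is to find a colouring of $\mathbb N$ that no pair with $f(y)=f(x)+j$ respects. Colour $n\in\mathbb N$ by $c(n)\coloneqq\lfloor f(n)/j\rfloor \bmod 2$. Then $f(y)=f(x)+j$ gives $\lfloor f(y)/j\rfloor=\lfloor f(x)/j\rfloor+1$, so $c(x)\neq c(y)$. Let $C\subseteq\mathbb N$ be the colour class of $c$ that belongs to $u$. Because $\pi_1(w)=\pi_2(w)=u$, both $\pi_1^{-1}(C)$ and $\pi_2^{-1}(C)$ belong to $w$. Hence $C\times C\in w$, and so $(C\times C)\cap A_j\in w$. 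This set is nonempty, but any $(x,y)$ in it has $c(x)=c(y)$ and $c(x)\neq c(y)$ at once, which is a contradiction. Therefore $j=0$, which is exactly $\{(x,y)\mid f(x)=f(y)\}\in w$.

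Nothing here is a serious obstacle. The only point needing care is the choice of the two-colouring via $\lfloor f/j\rfloor$ parity, which is the standard replacement for the nonstandard fact that $g(\alpha)\sim g(\beta)$ forces $g(\alpha)=g(\beta)$ or infinite distance. We also check that the swap symmetry is legitimate, i.e.\ that the hypothesis $\pi_1(w)=\pi_2(w)$ is preserved, which it clearly is. No saturation or earlier results are needed beyond elementary ultrafilter facts: finite unions and pushforwards.
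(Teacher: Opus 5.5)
Your proof is correct and is essentially the paper's argument: split $A$ into finitely many difference classes, take the one in $w$, and if its difference is nonzero, use $\pi_1(w)=\pi_2(w)$ to get $C\times C\in w$ for a colour class $C$ that no pair in that difference class can share. The only difference is the colouring. The paper works with $\abs{f(x)-f(y)}=k$ and colours by $f \bmod (k+1)$, so it needs no swap, while you use signed differences and the parity of $\lfloor f/j\rfloor$; the swap step is harmless, and could be avoided by colouring with $\lfloor f/\abs{j}\rfloor \bmod 2$.
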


\begin{proof}
Let $X_k\coloneqq \{(x,y)\mid |f(x)-f(y)|=k\}$. Since $\bigcup_{k=0}^\ell X_k\in w$,
there exists $k$ such that $X_k\in w$. Assume for the sake of contradiction that $k\ne 0$,
and hence $k+1\ge 2$. 
For $s=0,\ldots,k$ let $C_s\coloneqq \{x\mid f(x)\equiv s\mod (k+1)\}$. 
Since $\N=\bigcup_{s=0}^k C_s$ there exists
$s$ such that $C_s\in\pi_1(w)=\pi_2(w)$.
But then $(C_s\times C_s)\in w$ while $(C_s\times C_s)\cap X_k=\emptyset$,
a contradiction.
\end{proof}

The standard reformulation of \Cref{pr:ClassiArchimedee} is the following.
\begin{lemma}\label{lemma:starchclass}
  Let $w\in \beta \mathbb N^2$ be such that $\pi_1(w)=\pi_2(w)$. If there is $n$ such that $\set{(x,y)\mid \frac 1n x<y<nx}\in w$, then for all $\epsilon>0$ we have $\set{(x,y)\mid \abs{y/x -1}<\epsilon}\in w$.
\end{lemma}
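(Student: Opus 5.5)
We prove \Cref{lemma:starchclass} directly with ultrafilters, following the same pattern as the proof of \Cref{lemma:mposv}. The idea is to partition $\mathbb N$ into finitely many pieces such that if $x$ and $y$ lie in the same piece and $\frac1n x<y<nx$, then $y/x$ is forced to be close to $1$. Since $\pi_1(w)=\pi_2(w)$, some piece $C$ lies in $\pi_1(w)=\pi_2(w)$, so $C\times C\in w$, and intersecting with the given set gives the conclusion.

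The partition is built on a logarithmic scale. Fix $\epsilon>0$, and choose $\lambda>1$ so close to $1$ that $\lambda-1<\epsilon$ and $1-\lambda^{-1}<\epsilon$. Then choose an integer $N$ so large that $\lambda^{N}>n^2$. For $x\in\mathbb N$ write $j(x)\coloneqq\lfloor \log_\lambda x\rfloor$, so that $\lambda^{j(x)}\le x<\lambda^{j(x)+1}$. Define, for $s=0,\ldots,N-1$,
\[
C_s\coloneqq\set{x\in\mathbb N\mid j(x)\equiv s \pmod N}.
\]
These finitely many sets cover $\mathbb N$, so some $C_s$ belongs to $\pi_1(w)=\pi_2(w)$. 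Hence $C_s\times C_s\in w$, and therefore $W\coloneqq (C_s\times C_s)\cap\set{(x,y)\mid \frac1n x<y<nx}\in w$.

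The key step is to check that every $(x,y)\in W$ satisfies $\abs{y/x-1}<\epsilon$. From $\frac1n<y/x<n$ we get $\abs{\log_\lambda y-\log_\lambda x}<\log_\lambda n$, and hence $\abs{j(y)-j(x)}<\log_\lambda n+1$. Since $\lambda^N>n^2$, we have $\log_\lambda n<N/2$, so $\abs{j(y)-j(x)}<N$ once $N\ge 2$ (which we may assume). But $j(x)\equiv j(y)\pmod N$, so $j(x)=j(y)$. Thus both $x$ and $y$ lie in $[\lambda^{j},\lambda^{j+1})$, which gives $\lambda^{-1}<y/x<\lambda$. By the choice of $\lambda$, this yields $\abs{y/x-1}<\epsilon$. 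Therefore $W\subseteq\set{(x,y)\mid \abs{y/x-1}<\epsilon}$, and the latter set belongs to $w$ because $w$ is upward closed.

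The only step needing care is choosing $N$ relative to $n$ and $\lambda$. The period $N$ must exceed the possible spread of the bucket indices $j$ allowed by the Archimedean bound, so that congruence modulo $N$ forces actual equality of $j(x)$ and $j(y)$. This mirrors the mod $k+1$ trick in \Cref{lemma:mposv}. Everything else is routine.
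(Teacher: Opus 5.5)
Your proof is correct, but it organises the colouring differently from the paper. The paper works with binary expansions in two stages. First it observes $\abs{L(y)-L(x)}\le L(n)$ for the binary length $L$ and uses \Cref{lemma:mposv} to get $\set{(x,y)\mid L(x)=L(y)}\in w$, i.e.\ $x,y$ lie in the same dyadic block. Then it applies \Cref{lemma:mposv} again to each of the first $k$ binary digits, so that $x,y$ share their $k$ leading digits and hence $\abs{x/y-1}\le 2^{-k}$.

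You collapse both stages into one. You take geometric blocks $[\lambda^j,\lambda^{j+1})$ whose ratio $\lambda$ is already within $\epsilon$ of $1$, and colour the block index modulo $N$ with $\lambda^N>n^2$. This runs the modulo-$(k+1)$ trick of \Cref{lemma:mposv} inline, and membership in the same block immediately gives $\lambda^{-1}<y/x<\lambda$.

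What your version buys:
\begin{itemize}
\item it is shorter and uses a single finite partition;
\item it bounds $\abs{y/x-1}$ directly, as in the statement;
\item it works verbatim even if $w$ is principal. The paper's proof invokes non-principality of $\pi_1(w)$ to get $[2^{k-1},+\infty)^2\in w$, which is not among the hypotheses, though the principal case is trivial anyway.
\end{itemize}

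What the paper's route buys: it factors through \Cref{lemma:mposv} as a reusable black box. It also mirrors the nonstandard picture behind \Cref{pr:ClassiArchimedee}, in which equivalent hypernaturals have the same length and the same leading digits. This matches the digit-based style of arguments used in the rest of the paper.
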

\begin{proof}
For $a\in\N$, let $L(a)$ be the length of the binary representation of $a$, that is, the natural number such that $2^{L(a)-1}\le a<2^{L(a)}$. 
A straightforward computation shows that
\[
  \text{if }\frac{1}{n}x<y<nx \text{ then } \abs{L(y)-L(x)}\le L(n).
\]
By this and assumption it follows that
$\{(x,y)\mid \abs{L(y)-L(x)}\le L(n)\}\in w$ and so, by \Cref{lemma:mposv},
$\Lambda\coloneqq \{(x,y)\mid L(x)=L(y)\}\in w$.
For $s\ge 1$, let $\lambda_s:\N\to\{0,1\}$ be the function 
where $\lambda_s(a)$ is the $s$-th digit in the binary expansion of $a$
(we agree that $\lambda_s(a)=0$ if $s>L(a)$).
Since $\{(x,y)\mid \abs{\lambda_s(x)-\lambda_s(y)}\le 1\}=\N\times\N\in w$,
again by \Cref{lemma:mposv} we obtain that 
$\Lambda_s\coloneqq \{(x,y)\mid \lambda_s(x)=\lambda_s(y)\}\in w$.

Pick $k\in\N$ such that $2^{-k}<\epsilon$. 
We will reach the conclusion by showing that
\begin{equation}
  \label{eq:23}
  [2^{k-1},+\infty)^2\cap\Lambda\cap\bigcap_{s=1}^k\Lambda_s\subseteq
  \set*{(x,y)\biggm| \abs*{\frac{x}{y}-1}<\epsilon}.
\end{equation}

Indeed, since $\pi_1(w)=\pi_2(w)$ is non-principal, 
$[2^{k-1},+\infty)^2\in w$; besides, we already noticed that
$\Lambda,\Lambda_1,\ldots,\Lambda_k\in w$.

Let us finally show~\eqref{eq:23}.
Pick any $(x,y)$ in the intersection on the left hand side.
Since $(x,y)\in\Lambda$, the numbers $x=\sum_{s=1}^{\ell}x_s 2^{\ell-s}$
and $y=\sum_{s=1}^{\ell}y_s 2^{\ell-s}$ have binary representations of the same length $\ell$.
Note that $x_s=y_s$ for $s=1,\ldots,k$,
since $(x,y)\in\bigcap_{s=1}^k\Lambda_s$.
If we let $z\coloneqq \sum_{s=1}^k x_s 2^{\ell-s}$, then $x'\coloneqq x-z=\sum_{s=k+1}^\ell x_s2^{\ell-s}<2^{\ell-k}$
and $y'\coloneqq y-z=\sum_{s=k+1}^\ell y_s2^{\ell-s}<2^{\ell-k}$, and we obtain the desired inequality:
\[\abs*{\frac{x}{y}-1}=\abs*{\frac{z+x'}{z+y'}-1}=\frac{\abs{x'-y'}}{z+y'}\le\frac{\max\{x',y'\}}{y}\le
\frac{2^{\ell-k}}{2^\ell}=\frac{1}{2^k}<\epsilon.\qedhere\]
\end{proof}

\begin{proof}[Standard proof of \Cref{thm:qcd}]
The proof of right to left is the same as in the one originally provided at page~\pageref{proof:qcd} (and does not use nonstandard methods).

Left to right, let $f(x,y)=cx+dy$ and $g(x,y)=q\cdot y/x$ if $q\cdot y/x$ is an integer, and $1$ otherwise.

By \Cref{rem:StandcharPR} there exists $w\in \beta \mathbb N^2\setminus \mathbb N^2$ such that $\pi_1(w)=\pi_2(w)=f(w)=g(w)$. As $g(w)$ is nonprincipal, it follows that, for every $n\in \mathbb N$, we have  $\set{(x,y)\mid y>nx}\in w$. Therefore, for $n=d+1$, we have that  $\set{(x,y)\mid \frac 1ny <cx+dy<ny}\in w$ and, by \Cref{lemma:starchclass} (applied to the pushforward of $w$ along $(x,y)\mapsto(y, cx+dy)$), it follows that for all $\epsilon>0$ we have  $\set{(x,y)\mid \abs{(cx+dy)/y -1}<\epsilon}\in w$; since for every $n$ we also have  $\set{(x,y)\mid y>nx}\in w$, it follows that $d=1$. 

    Fix a sufficiently large prime $p$, let $v$ be the $p$-adic valuation and $\smod$ the function sending $n$ to $(n/p^{v(n)}\mod p)\in \mathbb F_p^\times$. Observe that, as $\pi_1(w)=\pi_2(w)$, we have $(\smod\circ \pi_1)(w)=(\smod\circ \pi_2)(w)$. Moreover, as $\pi_1(w)=f(w)$ and $\smod$ has finite image, we have
    \begin{equation}
      \label{eq:34}
      \set{(x,y) \mid \smod(x)=\smod(cx+y) }\in w.
    \end{equation}

 Since $\set{(x,y)\mid qy/x \in \mathbb N)}\in w$ and $p$ is large, so $v(q)=0$, we have  $\set{(x,y)\mid v(y)\ge v(x)}\in w$. Write $\set{(x,y)\mid v(y)\ge v(x)}=\set{(x,y)\mid v(y)>v(x)}\cup \set{(x,y)\mid v(y)= v(x)}$.

 We consider two cases depending on what is in $w$. 
If $\set{(x,y)\mid v(y)> v(x)}\in w$, then because $p\nmid c$ we have 
$\set{(x,y) \mid \smod(cx+y)=\smod(cx)}\in w$. Recall that  $\smod(cx)=\smod(c)\smod(x)$, hence
\[
  \set{(x,y) \mid \smod(x)=\smod(c)\smod(x)}\in w
\]
    and it follows that $\smod (c)=1$.  Since $p$ is sufficiently large, this entails $c=1$.

    Assume now that $\set{(x,y)\mid v(y)= v(x)}\in w$.
    Note that, as $v(q)=0$, this set equals $\set{(x,y)\mid v(qy/x)=0}$. Since $f(w)=g(w)$, we have $\set{(x,y)\mid v(y+cx)=0}\in w$. Because $\set{(x,y)\mid\smod(y)=\smod(x)}\in w$, this implies $c\ne -1$, as otherwise $\set{(x,y)\mid v(y+cx)>0}\in w$.     By also using  that $v(c)=0$, it follows that $\set{(x,y) \mid \smod(cx+y)=(1+c)\smod(x)}\in w$, so by~\eqref{eq:34} $\set{(x,y)\mid \smod(x)=(1+c)\smod(x)}\in w$, hence $\set{(x,y) \mid c\smod(x)= 0}\in w$. This is a contradiction as $p\nmid c$ and $0$ is not in the image of $\smod$.
\end{proof}

\renewcommand*{\bibfont}{\normalfont\footnotesize}

\printbibliography

@misc{goswamiMonochromaticTranslatedProduct2024,
  title = {Monochromatic {{Translated Product}} and {{Answering Sahasrabudhe}}'s {{Conjecture}}},
  author = {Goswami, Sayan},
  year = 2024,
  number = {arXiv:2412.17868},
  eprint = {2412.17868},
  publisher = {arXiv},
  archiveprefix = {arXiv}
}

@article{dinassoRamseysWitnesses2025,
  title = {Ramsey's Witnesses},
  author = {Di Nasso, Mauro and Luperi Baglini, Lorenzo and Mamino, Marcello and Mennuni, Rosario and Ragosta, Mariaclara},
  year = {2026},
  journal = {Combinatorial Theory},
  volume = {(to appear)},
note = {Preprint available at arXiv: \href{https://arxiv.org/abs/2503.09246}{\texttt{2503.09246}}}
}

@article{dinassoSelfdivisibleUltrafiltersCongruences2025,
  title = {Self-Divisible Ultrafilters and Congruences in {{$\beta \mathbb Z$}}},
  author = {Di Nasso, Mauro and Luperi Baglini, Lorenzo and Mennuni, Rosario and Pierobon, Moreno and Ragosta, Mariaclara},
  year = {2025},
  journal = {The Journal of Symbolic Logic},
  volume = {90},
  number = {3},
  pages = {1180--1197},
  doi = {10.1017/jsl.2023.51},
}

@article{bergelsonPolynomialExtensionsMillikenTaylor2014,
  title = {Polynomial Extensions of the {{Milliken-Taylor Theorem}}},
  author = {Bergelson, Vitaly and Hindman, Neil and Williams, Kendall},
  year = {2014},
  journal = {Transactions of the American Mathematical Society},
  volume = {366},
  number = {11},
  pages = {5727--5748},
  doi = {10.1090/S0002-9947-2014-05958-8}
}

@article{sahasrabudheExponentialPatternsArithmetic2018,
  title = {{Exponential patterns in arithmetic Ramsey theory}},
  author = {Sahasrabudhe, Julian},
  year = {2018},
  journal = {Acta Arithmetica},
  volume = {182},
  pages = {13--42},
  publisher = {Instytut Matematyczny Polskiej Akademii Nauk},
  doi = {10.4064/aa8603-9-2017}
}

@phdthesis{baglini2012hyperintegers,
  title = {Hyperintegers and {{Nonstandard Techniques}} in {{Combinatorics}} of {{Numbers}}},
  author = {Luperi Baglini, Lorenzo},
  year = 2012,
  eprint = {1212.2049},
  archiveprefix = {arXiv},
  school = {Università di Siena}
}

@incollection{di2015hypernatural,
  title = {Hypernatural {{Numbers}} as {{Ultrafilters}}},
  booktitle = {Nonstandard {{Analysis}} for the {{Working Mathematician}}},
  author = {Di Nasso, Mauro},
  editor = {Loeb, Peter A. and Wolff, Manfred P. H.},
  year = {2015},
  pages = {443--474},
  publisher = {Springer Netherlands},
  address = {Dordrecht},
  doi = {10.1007/978-94-017-7327-0_11},
  langid = {english},
}

@article{di2018ramsey,
  title = {Ramsey Properties of Nonlinear {{Diophantine}} Equations},
  author = {Di Nasso, Mauro and Luperi Baglini, Lorenzo},
  year = 2018,
  journal = {Advances in Mathematics},
  volume = {324},
  pages = {84--117},
  doi = {10.1016/j.aim.2017.11.003}
}

@book{DGL,
  title = {Nonstandard {{Methods}} in {{Ramsey Theory}} and {{Combinatorial Number Theory}}},
  author = {Di Nasso, Mauro and Goldbring, Isaac and Lupini, Martino},
  year = 2019,
  series = {Lecture {{Notes}} in {{Mathematics}}},
  number = {2239},
  publisher = {Springer},
doi ={10.1007/978-3-030-17956-4},
}

@article{schurUberKongruenz,
  title = {Uber Die {{Kongruenz}}  $x^m+y^m=z^m \pmod p$},
  author = {Schur, Issai},
  year = {1916},
  journal = {Jahresbericht Der Deutschen Mathematiker-vereinigung},
  volume = {25},
  pages = {114--117}
}

@article{hindmanFiniteSumsSequences1974,
  title = {Finite Sums from Sequences within Cells of a Partition of {$N$}},
  author = {Hindman, Neil},
  year = {1974},
  journal = {Journal of Combinatorial Theory, Series A},
  volume = {17},
  number = {1},
  pages = {1--11},
  doi = {10.1016/0097-3165(74)90023-5},
}

@book{hindmanAlgebraStoneCechCompactification2011,
  title = {Algebra in the {{Stone-\v Cech Compactification}}: {{Theory}} and {{Applications}}},
  shorttitle = {Algebra in the {{Stone-\v Cech Compactification}}},
  author = {Hindman, Neil and Strauss, Dona},
  year = {2011},
  publisher = {De Gruyter},
  doi = {10.1515/9783110258356},
  note = {2nd revised and extended edition}
}

@preamble{ "\providecommand{\noopsort}[1]{} " }

@article{bowenMonochromaticProductsSums2024,
  title = {Monochromatic Products and Sums in the Rationals},
  author = {Bowen, Matt and Sabok, Marcin},
  year = 2024,
  journal = {Forum of Mathematics, Pi},
  volume = {12},
  pages = {e17},
  doi = {10.1017/fmp.2024.19}
}

@article{bowenMonochromaticProductsSums2025,
  title = {Monochromatic Products and Sums in 2-Colorings of {$\mathbb N$}},
  author = {Bowen, Matt},
  year = 2025,
  journal = {Advances in Mathematics},
  volume = {462},
  pages = {110095},
  doi = {10.1016/j.aim.2024.110095}
}

@article{moreiraMonochromaticSumsProducts2017,
  title = {Monochromatic Sums and Products in {$\mathbb N$}},
  author = {Moreira, Joel},
  year = 2017,
  journal = {Annals of Mathematics},
  volume = {185},
  number = {3},
  doi = {10.4007/annals.2017.185.3.10}
}

@article{hindmanPartitionsPairwiseSums1984a,
  title = {Partitions and Pairwise Sums and Products},
  author = {Hindman, Neil},
  year = {1984},
  journal = {Journal of Combinatorial Theory, Series A},
  volume = {37},
  number = {1},
  pages = {46--60},
  doi = {10.1016/0097-3165(84)90018-9},
}

@article{hindmanPartitionsSumsProducts1979,
  title = {Partitions and Sums and Products of Integers},
  author = {Hindman, Neil},
  year = 1979,
  journal = {Transactions of the American Mathematical Society},
  volume = {247},
  pages = {227--245},
  doi = {10.1090/S0002-9947-1979-0517693-4}
}

\end{document}